\def\del{{\nabla}}
\def\fpd#1#2{\frac{\partial #1}{\partial #2}}
\def\R{{\mathbb R}}
\def\hook{{\mathchoice{\vrule height 0pt depth 0.4pt width 3pt
\vrule height 5pt depth 0.4pt \kern 3pt} {\vrule height 0pt depth
0.4pt width 3pt \vrule height 5pt depth 0.4pt \kern 3pt} {\vrule
height 0pt depth 0.2pt width 1.5pt \vrule height 3pt depth 0.2pt
width 0.2pt \kern 1pt} {\vrule height 0pt depth 0.2pt width 1.5pt
\vrule height 3pt depth 0.2pt width 0.2pt \kern 1pt} }}
\def\H#1{{#1}^{\scriptscriptstyle H}}
\def\V#1{{#1}^{\scriptscriptstyle V}}
\theoremstyle{plain}
\newtheorem{thm}{Theorem}[section]
\newtheorem{lem}[thm]{Lemma}
\newtheorem{propn}[thm]{Proposition}
\newtheorem{cor}[thm]{Corollary}
\theoremstyle{definition}
\newtheorem{defn}[thm]{Definition}
\newtheorem{xmpl}{Example}
\newcommand{\pd}[2]{\frac{\partial#1}{\partial#2}}
\newcommand{\hnabla}{\hat{\nabla}}
\begin{document}

\title{New progress in the inverse problem
in the calculus of variations}

\author{Thoan Do and Geoff Prince
\thanks{Email: {\tt t4do\char64 students.latrobe.edu.au,
geoff\char64 amsi.org.au}}
\\Department of Mathematics and Statistics, La Trobe University,\\ Victoria 3086,
Australia.\\ * The Australian Mathematical Sciences Institute,\\ c/o The University of Melbourne,\\Victoria 3010, Australia.}

\maketitle

\begin{quote}
{\bf Abstract.} {\small We present a  new class of solutions for the inverse problem in the calculus of variations in arbitrary dimension $n$. This is the problem of determining the existence and uniqueness of Lagrangians for systems of $n$ second order ordinary differential equations. We also provide a number of new theorems concerning the inverse problem using exterior differential systems theory (EDS). Concentrating on the differential step of the EDS process, our new results provide a significant advance in the understanding of the inverse problem in arbitrary dimension. In particular, we indicate how to generalise Jesse Douglas's famous solution for $n=2$.
We give some non-trivial examples in dimensions 2,3 and 4. We finish with a new classification scheme for the inverse problem in arbitrary dimension.}

\end{quote}

\section{Introduction: the inverse problem}

The inverse problem in the calculus of variations can be expressed as follows. Given a system of second-order ordinary differential equations
\[
\ddot x^a = F^a (t, x^b, \dot x^b), \ \ a, b = 1, \dots, n,
\]
the question is whether the solutions of this system are also the solutions of the Euler Lagrange equations,
$$
\frac{d}{dt}\Bigl(\pd{L}{\dot{x}^a}\Bigr) - \pd{L}{x^a} = 0,% \qquad a = 1,...,n,
$$
for some regular Lagrangian $L(t,x^b,\dot{x}^b)$. This problem was first proposed by Helmholtz \cite{HH01} in 1887. He considered whether the equations {\em in the form presented} were Euler-Lagrange. In the case of single equations Helmholtz found necessary conditions for this to be true. It is not well-known that Sonin \cite{Son} solved this one-dimensional problem the previous year in a more general form, although Hirsch \cite{Hirsch02} is credited with the so-called multiplier version of the inverse problem, which is the focus of this paper. He addressed the uniqueness and existence of a non-degenerate multiplier matrix, $g_{ab}(t, x^c, \dot x^c)$, satisfying
\[
g_{ab} (\ddot x^b - F^b) \equiv \frac{d}{dt} \left(\frac{\partial
L}{
\partial \dot x^a}\right) - \frac{\partial L}{\partial x^a}.
\]
Necessary and
sufficient conditions for the existence of a regular Lagrangian, according to Douglas \cite{D41} and to Sarlet \cite{S82},  are that this multiplier satisfy

\begin{equation}
g_{ab} = g_{ba}, \quad \Gamma(g_{ab}) =
g_{ac}\Gamma_b^c+g_{bc}\Gamma_a^c, \quad g_{ac}\Phi_b^c  =
g_{bc}\Phi_a^c, \quad \frac{\partial g_{ab}}{\partial\dot x^c}  =
\frac{\partial g_{ac}}{\partial\dot x^b}, \label{Helmholtz1}
\end{equation}
where
\[
\Gamma_b^a := -{\frac{1}{2}}\frac{\partial F^a}{\partial \dot
x^b}, \quad \Phi_b^a := -\frac{\partial F^a}{\partial x^b} -
\Gamma_b^c\Gamma_c^a - \Gamma(\Gamma_b^a),
\]
and where
\[
\Gamma := \frac{\partial}{\partial t} + \dot{x}^a
\frac{\partial}{\partial x^a} + F^a \frac{\partial}{\partial
\dot{x}^a}.
\]
These conditions, along with non-degeneracy, are commonly referred to as the {\it Helmholtz conditions}. If one or more matrices $g_{ab}$ are found that satisfy these four Helmholtz conditions, then there exists one (or more) Lagrangians related to these
matrices by the expression,
$$
\frac{\partial^2 L}{\partial \dot{x}^a \partial \dot{x}^b} = g_{ab}.
$$
Integrating this for a given $g_{ab}$ we see that the related Lagrangian $L$ is only defined up to the addition of a total time derivative of an arbitrary function of $t, x^a$.

The multiplier problem was completely solved by Douglas in 1941 for the two dimensional case (see \cite{D41}), that is, a pair of second order equations on the plane. He divided the problem into four primary cases (I to IV) according to the properties of the matrix $\Phi^a_b$. The corresponding solution for higher dimensions, even for dimension 3, remained unsolved until the late nineteen nineties when some arbitrary $n$ subcases were elaborated~\cite{CPST, SaCraMa,AT92}.

The current attacks on this problem, dating back to the 1980s, involve the creation and use of various differential geometric tools. We offer the reader the following references which give some perspective on these developments \cite{AT92,Cramp84,CSMBP94,GM2,KP08,MFLMR90,P00,STP}.

The current situation, in the framework of \cite{CSMBP94}, is that the following dimension $n$ situations are solved in the sense of Douglas.
\begin{itemize}
\item[1.] $\Phi$ is a multiple of the identity. The multiplier is determined by $n$ arbitrary functions each of $n+1$  variables. This is the extension of Douglas's case I. See \cite{Al06,AT92,SaCraMa}.
\item[2.] $\Phi$ is diagonalisable with distinct eigenvalues and ``integrable eigenspaces". The multiplier is determined by $n$ arbitrary functions each of 2 variables. This is the extension of Douglas's case IIa1. See \cite{CPST,Al03}.%\question{take out the third item, agree}
%\item[3.] A more general case in which, for example, there are no integrable eigenspaces of $\Phi$. The multiplier is fixed and its existence depends on the closure of a single one form. See \cite{PK07}.
\item[3.] There are many non-existence results depending on technical conditions on $\Phi$. See \cite{PK07}.
\end{itemize}

In the context of our current paper Anderson and Thompson~\cite{AT92} made a significant breakthrough by applying the method of exterior differential systems (EDS) to the inverse problem. They illustrated the effectiveness of the method by many concrete examples. In their paper, however only Douglas's case I where $\Phi$ is a multiple of the identity was generalised to arbitrary $n$. Aldridge \cite{Al03} and Aldridge et al \cite{Al06} pursued this EDS approach further using the connection of Massa and Pagani~\cite{MaPa94}. While the thesis \cite{Al03} re-investigated Douglas' case IIa2 in dimension 2 and recovered case IIa1 for arbitrary  dimension, the paper \cite{Al06} focused only on the first step of EDS process, the so-called differential ideal step.

In this paper we give results for the cases where some eigen co-distributions are integrable and some are not by using exterior differential system (EDS) theory. We provide a number of quite general theorems and propositions concerning the inverse problem via EDS in Section \ref{section-IP-EDS}. Among these interesting results, Theorem \ref{DI-first-step-cond} is particularly useful in indicating non-existence cases. In addition, it suggests a scheme to replace the four cases of Douglas in the treatment of the arbitrary dimension problem. Up until now it has not been obvious how to do this because of the low dimensionality, however we find that when $\Phi$ is diagonalisable the integrability of the co-distributions must be considered first and the termination of the differential step second. Our scheme is detailed in Section \ref{classification-scheme}.  In Section \ref{solve-case-2a2-n} we particularly solve the inverse problem for the system of second-order ODE in arbitrary dimension of which $\Phi$ is diagonalisable with distinct eigenvalues and exactly one co-distribution is non-integrable. This can be seen as an extension of Douglas's case IIa2 in which, for $n=2$, one co-distribution is integrable and one is not. The results are given in Theorem \ref{case-2a2-n-results}.
%That is, some co-distributions, say $\{\phi^{aV},\phi^{aH}\}$ for some $a\in\{1,\dots,n\}$ are Frobenius integrable, or dually the distributions $\{X_1^V,X_1^H, \dots, \bar X_a^V, \bar X_a^H,\dots, X_n^V, X_n^H\}$ are integrable (the bars indicate the missing elements).

Of course, in general, $\Phi$ will not be diagonalisable over the reals. But when it is, the eigenvalues will generally be distinct and none of the eigen co-distributions will be integrable. Even in this situation there are cases where solutions exist. Indeed, it is remarkable that so many of the cases are well populated with variational equations. We will illustrate our own results with a number of examples in Section \ref{examples}.
%in this paper we will use EDS to generalise to arbitrary dimension Douglas' case IIa2: the semi-separable case where $\Phi$ is diagonalisable with distinct eigenvalues. We give a set of necessary and sufficient conditions to be able to quickly check the existence of the solution rather than processing the whole EDS procedure over again for each system of SODE. In addition, we identify the importance of an integrable direction in an eigen co-distribution for the existence of non-degenerate solutions. In doing so we produce a scheme to replace the four cases of Douglas in the treatment of the arbitrary dimension problem. This is detailed in the last section.

\section{Geometric formulation}

We now briefly describe the basics of our geometrical calculus, for more details we refer to the book chapter \cite{KP08}. We are analysing a system of second-order ordinary differential equations
\begin{align}\label{SODE1}
\ddot x^a = F^a (t, x^b, \dot x^b), \ \ a, b = 1, \dots, n,
\end{align}
for some smooth $F^a$ on a manifold $M$ with generic local coordinates $(x^a)$. The evolution space $E:=\R \times TM$ has adapted coordinates $(t,x^a,\dot x^a)$ or $(t,x^a,u^a)$. We construct on $E$ from the system of equations \eqref{SODE1} a second-order differential equation field (SODE):
\begin{equation}\label{SODE2}
\Gamma := \frac{\partial}{\partial t} + u^a
\frac{\partial}{\partial x^a} + F^a \frac{\partial}{\partial
u^a}.
\end{equation}
The SODE produces on $E$ a nonlinear connection with components $\Gamma_b^a := -{\frac{1}{2}}\frac{\partial F^a}{\partial u^b}$. The evolution space $E$ has a number of natural structures. The contact and vertical structures are combined in the {\em vertical endomorphism} $S$, a (1,1) tensor field on $E$, with coordinate expression $S:=V_a\otimes\theta^a$, where $V_a:=\frac{\partial}{\partial u^a}$ are the vertical basis fields and $\theta^a:=dx^a-u^adt$ are local contact forms. It is shown in \cite{Cramp84} the first order deformation $\mathcal{ L}_\Gamma S$ has eigenvalues $0, 1$ and $-1$. The eigenspaces corresponding to eigenvalue $0, 1$ and $-1$ are $Sp\{\Gamma\}$, the {\em vertical subspace} $V(E):=Sp\{V_a\}$ of the tangent space and the {\em horizontal subspace} $H(E):=Sp\{H_a\}$ respectively, where
$$ H_a = \frac{\partial}{\partial x^a} -\Gamma^b_a\frac{\partial}{\partial u^b}.$$
 So an adapted local basis on $E$ is given by $\{\Gamma, V_a, H_a\}$ with dual basis $\{dt,\psi^a,\theta^a\}$ where $$\psi^a:=du^a-F^adt+\Gamma^a_b\theta^b.$$
Following directly from the definitions of $\Gamma,V_a$ and $H_a$ given in \cite{Cramp84,P00} we note:
$$
[\Gamma, H_a]=\Gamma^b_aH_b + \Phi^b_aV_b, \quad [\Gamma,
V_a]=-H_a + \Gamma^b_aV_b, \quad [V_a,V_b]=0,
$$

$$
[H_a, V_b]=-\frac{1}{2}\left(\frac{\partial^2 F^c}{\partial u^a\partial
u^b}\right)V_c=V_b(\Gamma^c_a)V_c=V_a(\Gamma^c_b)V_c=[H_b, V_a],
$$
and
$$
[H_a,H_b]=R^d_{ab} V_d,
$$
where the curvature of the connection is defined by
$$
R^d_{ab}:=\frac{1}{2}\left (\frac{\partial^2 F^d}{\partial
x^a\partial u^b} - \frac{\partial^2 F^d}{\partial x^b\partial u^a}
+\frac{1}{2}\left (\frac{\partial F^c}{\partial
u^a}\frac{\partial^2 F^d}{\partial u^c\partial u^b} -
\frac{\partial F^c}{\partial u^b}\frac{\partial^2 F^d}{\partial
u^c\partial u^a} \right )\right ).
$$

Following~\cite{SVCM95} we briefly introduce vector fields and forms {\em along the
projection} $\pi^0_1:E\to\mathbb R \times M$.  Vector fields along $\pi^0_1$ are sections of the pull back bundle ${\pi^0_1}^*(T(\mathbb R \times M))$ over
$E$. $\mathfrak{X}(\pi^0_1)$ denotes the $C^\infty(E)$ module of
such vector fields. In the current context see~\cite{JP01}.

With $X:= X^0\frac{\partial}{\partial t} + X^a\fpd{}{x^a}
$ and $dt, \theta^a \in \mathfrak{X}^*(\pi^0_1)$  we define the following lifts from $\mathfrak{X}(\pi^0_1)$ to $\mathfrak{X}(E)$:
\begin{equation*}
X^\Gamma :=dt(X)\Gamma,\quad
X^H :=\theta^a(X)H_a,\quad
X^V :=\theta^a(X)V_a.
\end{equation*}
For an intrinsic treatment of these various lifts see \cite{KP08}.

Any vector $W \in \mathfrak{X}(E)$ can then be uniquely decomposed as
\begin{equation*}
W=(W_\Gamma)^\Gamma+(W_H)^H+(W_V)^V
\end{equation*}
where $W_\Gamma, W_H, W_V \in \mathfrak{X}(\pi^0_1)$ with
\begin{align*}
W_\Gamma &:=dt(W)(\frac{\partial}{\partial t}+u^a\frac{\partial}{\partial x^a}),\\
W_H &:=dt(W)\frac{\partial}{\partial t}+dx^a(W)\frac{\partial}{\partial x^a},\quad
W_V :=\psi^a(W)\frac{\partial}{\partial x^a}.
\end{align*}

The {\em dynamical covariant derivative} $\nabla$ and the {\em Jacobi endomorphism} $\Phi$ that appear in \eqref{Helmholtz1} are defined along the tangent bundle projection through the following formulas
\[
[\Gamma,\V{X}]= -\H{X} + \V{(\del X)} , \qquad [\Gamma,\H{X}]=
\H{(\del X)} + \V{\Phi(X)}.
\]
In coordinates,
\[
\Phi = \Phi^a_b \fpd{}{x^a}\otimes \theta^b,
\]
with $\Phi^a_b$ as defined before. $\del$ can be extended to act on forms along the projection and is given in coordinates by
\[
\del f=\Gamma(f)\ \ \mbox{on functions}, \qquad
\del\fpd{}{x^a}=\Gamma^b_a\fpd{}{x^b},\qquad
\del\theta^a=-\Gamma^a_b\theta^b.
\]
Massa and Pagani \cite{MaPa94} introduced a linear connection on $E$ by imposing some natural requirements. If we denote this connection by $\hat{\nabla}$, these are that the covariant differentials $\hat{\nabla}dt, \hat{\nabla}S$ and $\hat{\nabla}\Gamma$ are all zero and that the vertical sub-bundle is flat. In coordinates, they are:

\begin{align}
\notag &\hnabla_\Gamma\Gamma = 0 , &  &\hnabla_\Gamma H_a = \Gamma^b_a H_b,     &  &\hnabla_\Gamma V_a  = \Gamma^b_a V_b,\\
\label{covariant-deriv-defn-basis} &\hnabla_{H_a}\Gamma = 0,  & &\hnabla_{H_a}H_b = \frac{\partial \Gamma^c_{a}}{\partial u^b} H_c,  &  &\hnabla_{H_a}V_b = \frac{\partial \Gamma^c_{a}}{\partial u^b} V_c, \\
\notag &\hnabla_{V_a}\Gamma = 0,  & &\hnabla_{V_a}H_b = 0,                  &  &\hnabla_{V_a}V_b = 0,
\end{align}

With any linear connection, $\tilde \nabla$ (not to be confused with the dynamical covariant derivative), on a manifold $M$ there is an associated shape map and torsion (see \cite{JP01}).
The shape map $A_Z: \mathfrak{X}(M) \rightarrow \mathfrak{X}(M)$, as given in Jerie and Prince~\cite{JP01}, is defined as
$$
A_Z(\xi) := \frac{d}{dt}\Bigl|_{t=0} \tau_t^{-1}(\zeta_{t*}\xi), \qquad \text{where }\xi \in T_xM.
$$
where $\tau_t: T_xM \rightarrow T_{\zeta_t(x)}M$ is the parallel transport map, defining along the flow $\{\zeta_t\}$.\\
More useful representations of this shape map on $M$ are
\begin{align*}
A_X Y &= \tilde \nabla_X Y - [X,Y],\\ %\label{A-Gamma-defn}
A_XY &= \tilde \nabla_YX + T(X,Y)\notag
\end{align*}
where $X,Y \in \mathfrak{X}(M)$ and the torsion is defined as usual by
\begin{align*}
T(X,Y):=\tilde \nabla_XY-\tilde \nabla_YX-[X,Y].
\end{align*}

The torsion, $\hat T$, of the Massa and Pagani connection captures all the important properties of the SODE $\Gamma$:
\begin{align*}
&\hat T(\Gamma,V_a)=H_a,\ \hat T(\Gamma,H_a)=-\Phi^b_aV_b,\ \hat T(V_a,V_b)=0,\\ &\hat T(V_a,H_b)=0,\ \hat T(H_a,H_b)=-R^c_{ab}V_c.
\end{align*}

The commutators of the vector fields on $E$ can be written in terms of connection $\hnabla$ and the shape map as
\begin{subequations}\label{CD-VF-commutators}
\begin{align}
& [\Gamma, X^V] = \hnabla_\Gamma X^V - A_\Gamma(X^V),  \label{CD-VF-commutators-1}\\
& [\Gamma, X^H] =\hnabla_\Gamma X^H - A_\Gamma (X^H),   \label{CD-VF-commutators-2}\\
&  [X^V, Y^V] = \hnabla_{X^V} Y^V - \hnabla_{Y^V} X^V,    \label{CD-VF-commutators-3}\\
& [X^V, Y^H] = \hnabla_{X^V} Y^H - \hnabla_{Y^H} X^V,    \label{CD-VF-commutators-4}\\
& [X^H, Y^H] = \hnabla_{X^H} Y^H - \hnabla_{Y^H} X^H + R(X^H,Y^H).  \label{CD-VF-commutators-5}
\end{align}
\end{subequations}
It can be seen that $\hnabla_\Gamma$ and $A_\Gamma$ replace the role of $\nabla$ and $\Phi$ respectively.

The curvature tensor $R$ in \eqref{CD-VF-commutators-5} is a (1,2) tensor field on $E$ defined as follows $$R:=R^c_{ab}(\theta^a \wedge \theta^b)\otimes V_c.$$
We will make no notational distinction between $\Phi$ acting along the tangent bundle projection and acting as an endomorphism on $E$ as $\Phi=\Phi^b_a \ V_b\otimes\theta^a.$

In the next section we briefly sketch the ideas of the exterior
differential systems approach, specifically in the context of the
inverse problem.

\section{Helmholtz conditions and the EDS process}

\subsection{The Helmholtz conditions}
For a given regular Lagrangian $L$, there is a unique vector field, called the {\em Euler-Lagrange field},
$\Gamma$ on
 $E$ such that
$$
\Gamma \hook d\theta_L=0 \quad \text{and} \quad d t(\Gamma)=1.
$$
where $\theta_L$ is the {\em Poincar\'{e}-Cartan 1-form},
$$\theta_L:=Ldt+dL\circ S=Ldt+\frac{\partial L}{\partial u^a}\theta^a.$$
This vector field is a SODE, and the equations satisfied by its integral
curves are the Euler-Lagrange equations for $L$. By careful observation of the properties of the Cartan 2-form $d\theta_L$
the following theorem from~\cite{Cramp84} gives a transparent geometric version of the Helmholtz conditions.
\begin{thm} \label{Helmholtz2}\cite{Cramp84}
Given a SODE $\Gamma$, the necessary and sufficient conditions for there to be Lagrangian, whose Euler-Lagrange field is $\Gamma$, are that there should exist a 2-form $\Omega$ satisfying
\begin{align}
& \Omega \text{ is of maximal rank, i.e.,}\ \wedge^n\Omega\neq 0, \label{helmholtz2-1}\\
&\Omega(V_a,V_b)=0, \qquad \forall\ V_a,V_b \in V(E), \label{helmholtz2-2}\\
&\Gamma \hook \Omega=0, \label{helmholtz2-3}\\
&d\Omega=0. \label{helmholtz2-4}
\end{align}
\end{thm}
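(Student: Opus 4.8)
The plan is to establish the equivalence by passing between the Lagrangian description and the 2-form $\Omega$ in both directions. For the forward implication, I would start from a regular Lagrangian $L$ whose Euler-Lagrange field is $\Gamma$ and simply take $\Omega := d\theta_L$. Condition \eqref{helmholtz2-3} is then immediate from the defining property $\Gamma \hook d\theta_L = 0$, and condition \eqref{helmholtz2-4} is automatic since $\Omega$ is exact. For \eqref{helmholtz2-2}, I would compute $d\theta_L$ in the adapted coframe $\{dt,\theta^a,\psi^a\}$: since $\theta_L = L\,dt + \frac{\partial L}{\partial u^a}\theta^a$ and $S$ has image in $V(E)$, the only term in $d\theta_L$ that can pair two vertical fields would involve $\frac{\partial^2 L}{\partial u^a \partial u^b}$ against $\psi^a\wedge\psi^b$-type pieces, and one checks this vanishes because $d\theta^a$ has no $\psi\wedge\psi$ component while $dL$ contributes $\frac{\partial L}{\partial u^a}\psi^a + \dots$ symmetrically. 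The maximal rank condition \eqref{helmholtz2-1} follows from regularity of $L$, i.e. non-degeneracy of the Hessian $\frac{\partial^2 L}{\partial u^a \partial u^b}$, which is exactly what makes $\Gamma \hook d\theta_L = 0$ determine $\Gamma$ uniquely as a SODE in the first place.

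For the converse, I would suppose a 2-form $\Omega$ satisfying \eqref{helmholtz2-1}--\eqref{helmholtz2-4} is given and reconstruct a Lagrangian. From $d\Omega = 0$ and the Poincar\'e lemma, locally $\Omega = d\vartheta$ for some 1-form $\vartheta$. The task is to show $\vartheta$ can be chosen (modulo a closed form, which does not affect $\Omega$) to have the Poincar\'e-Cartan form $\theta_L = L\,dt + \frac{\partial L}{\partial u^a}\theta^a$ for a suitable function $L$; the structural conditions \eqref{helmholtz2-2} and \eqref{helmholtz2-3} are precisely what force the general primitive into this shape. Writing $\vartheta = \alpha\,dt + \beta_a\theta^a + \gamma_a\psi^a$ in the adapted coframe, condition \eqref{helmholtz2-2} ($\Omega$ kills pairs of vertical fields) combined with the commutator relations on $E$ kills the $\gamma_a$ (up to adjusting by an exact form), so $\vartheta$ is semibasic in the appropriate sense; then condition \eqref{helmholtz2-3} ($\Gamma \hook \Omega = 0$) relates $\alpha$ and $\beta_a$ via $\Gamma$-derivatives, and comparing with $S$ applied to $\vartheta$ identifies $\beta_a$ with $\partial L/\partial u^a$ and $\alpha$ with $L - u^a\beta_a$-type expressions, yielding a genuine $L$. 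Finally, regularity of $L$ comes from \eqref{helmholtz2-1}: maximal rank of $\Omega$ translates, in the adapted basis, into invertibility of the matrix $\Omega(H_a, V_b)$, which equals (a sign times) the Hessian $g_{ab} = \frac{\partial^2 L}{\partial u^a \partial u^b}$, so $L$ is indeed regular and, by uniqueness of the Euler-Lagrange field, $\Gamma$ is its Euler-Lagrange field.

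The main obstacle I anticipate is the converse direction, specifically the bookkeeping needed to show that the a priori arbitrary primitive $\vartheta$ of $\Omega$ can be gauged into Poincar\'e-Cartan form. This is where conditions \eqref{helmholtz2-2} and \eqref{helmholtz2-3} must be used in tandem with the explicit bracket relations among $\Gamma, H_a, V_a$ recorded earlier, and one has to be careful that the freedom $\vartheta \mapsto \vartheta + df$ is exactly enough to remove the unwanted components without introducing new ones. The regularity bookkeeping --- translating $\wedge^n \Omega \neq 0$ into non-degeneracy of $g_{ab}$ --- is routine linear algebra in the adapted coframe once the form of $\Omega$ is pinned down, since $\Omega = g_{ab}\,\theta^a \wedge \psi^b + (\text{terms in } dt)$ and $\Gamma\hook\Omega = 0$ forces the $dt$-terms to be determined by the rest, leaving $\wedge^n\Omega$ proportional to $\det(g_{ab})\,dt\wedge\theta^1\wedge\psi^1\wedge\dots$. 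I would present the computation in the adapted frame throughout, as that is where all four conditions become transparent algebraic statements about the components of $\Omega$.
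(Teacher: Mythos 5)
Your argument is essentially correct, but note that the paper does not prove this theorem at all: it is quoted from \cite{Cramp84}, and the only proof-like material in the paper is the subsequent derivation of the classical Helmholtz conditions \eqref{Helmholtz1} from it. That derivation follows the ``multiplier'' route: use $\mathcal{L}_\Gamma\Omega=0$ (a consequence of \eqref{helmholtz2-3} and \eqref{helmholtz2-4}) to pin down $\Omega=g_{ab}\psi^a\wedge\theta^b$, unpack $d\Omega=0$ into the four conditions on $g_{ab}$, and recover $L$ by integrating $\partial^2L/\partial u^a\partial u^b=g_{ab}$ (possible because the last Helmholtz condition makes the candidate Hessian closed along the fibres). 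Your converse instead applies the Poincar\'e lemma directly to $\Omega$ and gauges the primitive $\vartheta$ into Poincar\'e--Cartan form; this is a genuinely different and somewhat more economical route, since conditions \eqref{helmholtz2-2} and \eqref{helmholtz2-3} act on $\vartheta$ exactly as you say: $\Omega(V_b,V_c)=V_b(\gamma_c)-V_c(\gamma_b)=0$ lets you write $\gamma_a=V_a(f)$ (fibrewise Poincar\'e lemma on the contractible fibres) and subtract $df$, and then $\Omega(\Gamma,V_b)=-V_b(\alpha)+\beta_b=0$ gives $\beta_b=V_b(\alpha)$, so $\vartheta=\theta_L$ with $L=\alpha$ on the nose in the coframe $\{dt,\theta^a,\psi^a\}$ (your ``$L-u^a\beta_a$'' expression belongs to the coordinate coframe $\{dt,dx^a,du^a\}$). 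Two small remarks: in the forward direction $\Omega(V_a,V_b)=0$ holds for the trivial reason that every term of $d\theta_L$ carries a factor of $dt$ or $\theta^a$, both of which annihilate vertical vectors, so no Hessian bookkeeping is needed there; and for the rank count, with $\Gamma\hook\Omega=0$ the reduced form on $Sp\{H_a,V_b\}$ has block shape $\bigl(\begin{smallmatrix}A & -g\\ g^{T} & 0\end{smallmatrix}\bigr)$, so $\wedge^n\Omega\neq0$ is equivalent to $\det(g_{ab})\neq0$ regardless of the $\Omega(H_a,H_b)$ block. What the paper's route buys is the explicit multiplier formulation \eqref{Helmholtz1} needed for the rest of the paper; what yours buys is a shorter, coordinate-light existence proof for $L$.
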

We will briefly show how the Helmholtz conditions in \eqref{Helmholtz1} arise from this theorem as follows.\\
In Crampin, Prince and Thompson \cite{Cramp84} it is shown by using $\mathcal{L}_r\Omega=0$ (from \eqref{helmholtz2-3} and \eqref{helmholtz2-4}) that    %\question{edited the discussion of this theorem, checked!}
\begin{align}\label{2-form-Omega}
\Omega=g_{ab}\psi^a\wedge\theta^b, \quad |g_{ab}|\neq 0.
\end{align}
The condition \eqref{helmholtz2-4} gives
\begin{align}\label{closure-condition}
 d\Omega(X,Y,X)=0 \text{ for all } X,Y,Z \in \{\Gamma, V_a, H_b\}.
\end{align}

Applying the formula for the exterior derivative of a 2-form $\Omega$,
\begin{align}\label{exter-deri-2-form}
d\Omega(X,Y,Z)=\sum_{cyclic\ X,Y,Z} \left(X(\Omega(Y,Z))-\Omega([X,Y],Z)\right),
\end{align}
to \eqref{closure-condition} we find that $d\Omega(V_a,V_b,V_c)=0$ is trivial and\\
%\begin{center}
%\renewcommand{\arraystretch}{1.25}
%\begin{tabular}{ c c c}
% $1.\ d\Omega(\Gamma, V_a, V_b)=0$ & $\Leftrightarrow$ & $g_{ab}=g_{ba}$ \\
% $2.\ d\Omega(\Gamma, H_a, H_b)=0$ & $\Leftrightarrow$ & $g_{ac}\phi^c_b=g_{bc}\phi^c_a$ \\
% $3.\ d\Omega(\Gamma, V_a, H_b)=0$ & $\Leftrightarrow$ & $\Gamma(g_{ab})-g_{ac}\Gamma^c_b-g_{bc}\Gamma^c_a=0$ \\
% $4.\ d\Omega(V_a, V_b, H_c)=0$    & $\Leftrightarrow$ & $\frac{\partial g_{ab}}{\partial\dot x^c}  =
%\frac{\partial g_{ac}}{\partial\dot x^b}$ \\
%\end{tabular}
%\end{center}
\begin{align}
\notag &d\Omega(\Gamma, V_a, V_b)=0&&\Leftrightarrow && g_{ab}=g_{ba},\hfill \\
\label{closure-condition-1}&d\Omega(\Gamma, H_a, H_b)=0 &&\Leftrightarrow  && g_{ac}\Phi^c_b=g_{bc}\Phi^c_a,\hfill \\
\notag &d\Omega(\Gamma, V_a, H_b)=0 &&\Leftrightarrow  && \Gamma(g_{ab})-g_{ac}\Gamma^c_b-g_{bc}\Gamma^c_a=0,\hfill \\
\notag &d\Omega(V_a, V_b, H_c)=0 &&\Leftrightarrow && \frac{\partial g_{ab}}{\partial\dot x^c}=\frac{\partial g_{ac}}{\partial\dot x^b}.\hfill
\end{align}
%For example:
%\begin{align*}
%0&=d\Omega(\Gamma, V_a, V_b)\\
% &=\Gamma(\Omega(V_a,V_b))-V_a(\Omega(\Gamma, V_b))+V_b(\Omega(\Gamma, V_a))\\
% &\quad -\Omega([\Gamma,V_a],V_b)+\Omega([\Gamma,V_b],V_a)-\Omega([V_a,V_b],\Gamma)\\
% &=-\Omega([\Gamma,V_a],V_b)+\Omega([\Gamma,V_b],V_a)\\
% &=-g_{ef}\psi^e\wedge\theta^f(-H_a+\Gamma_a^cV_c,V_b)+g_{ef}\psi^e\wedge\theta^f(-H_b+\Gamma_b^cV_c,V_a)=g_{ab}-g_{ba}
%\end{align*}
Therefore $d\Omega = 0$ is equivalent to the Helmholtz conditions in \eqref{Helmholtz1} since the two remaining conditions in \eqref{closure-condition} are the consequences of the four conditions in \eqref{closure-condition-1} as shown in this context by Aldridge \cite{Al03}.
In this paper we concentrate on the case where the matrix representation, $\bold \Phi=(\Phi^a_b)$, of $\Phi$ is diagonalisable. As in \cite{CSMBP94} and \cite{STP} this corresponds to Douglas' case I, case IIa or case III. Our choice of the basis for $\mathfrak{X}(E)$ to work with is $\{\Gamma, X_a^V, X_a^H\}$, where $X_a^V$ and $X_a^H$ are vertical lifts and horizontal lifts of eigenvectors $X_a$ of diagonalisable $\bold \Phi$ on the tangent bundle projection. Let $X_a=X_a^b\frac{\partial}{\partial x^b} \in \mathfrak{X}(\pi^0_1)$ be eigenvectors of $\bold \Phi$ corresponding to eigenvalues $\lambda_a$, then their vertical lifts and their horizontal lifts are $X_a^V:=X_a^bV_b$ and $X_a^H:=X_a^b H_b$ respectively. The lifted eigenforms $\phi^{aV}$ and $\phi^{aH}$ of eigenforms $\phi^a$ together with $dt$ form the dual basis $\{dt, \phi^{aV}, \phi^{aH}\}$ to the basis $\{\Gamma, X_a^V, X_a^H\}$. This means that we will look for a non-degenerate closed 2-form $\omega \in \Sigma:=Sp\{\phi^{aV}\wedge \phi^{bH}\}$, that is, $\omega=r_{ab}\phi^{aV}\wedge \phi^{bH}$, instead of the one in \eqref{2-form-Omega}. Once the $r_{ab}$'s are found, then the multiplier $g_{ab}$ is given by
\begin{equation}\label{conver-r-to-g}
g_{ab}=r_{cd}\phi^c_a\phi^d_b,
\end{equation}
where $\phi^c_a$ and $\phi^d_b$ are components of eigenforms $\phi^c$ and $\phi^d$ respectively.

The commutators identities regarding the basis $\{\Gamma, X_a^V, X_a^H\}$ derived from \eqref{CD-VF-commutators-1}-\eqref{CD-VF-commutators-5} are as follows
\begin{subequations} \label{bracket-iden}
\begin{align}
\label{bracket-iden-a}[\Gamma,X_b^V] &=\tau_b^{a\Gamma}X_a^V-X_b^H,\\
\label{bracket-iden-b}[\Gamma,X_b^H] &=\tau_b^{a\Gamma}X_a^H+\lambda_b\delta_b^aX_a^V=\tau_b^{a\Gamma}X_a^H+\lambda_bX_b^V,\\
\label{bracket-iden-c}[X_b^V,X_c^V]  &=(\tau_{bc}^{aV}-\tau_{cb}^{aV})X_a^V,\\
\label{bracket-iden-d}[X_b^V,X_c^H]  &=\tau_{bc}^{aV}X_a^H-\tau_{cb}^{aH}X_a^V,\\
\label{bracket-iden-e}[X_b^H,X_c^H]  &=(\tau_{bc}^{aH}-\tau_{cb}^{aH})X_a^H+\phi^{aV}(R(X_b^H,X_c^H))X_a^V,
\end{align}
\end{subequations}
where the $\tau$'s are defined through
\begin{align}
\notag &\hnabla_\Gamma X_b^V = \tau_b^{a\Gamma}X_a^V , &  &\hnabla_\Gamma X_b^H= \tau_b^{a\Gamma}X_a^H, \\
\label{tau-defn} &\hnabla_{X_b^V}X_c^V=\tau^{aV}_{bc}X_a^V,  & &\hnabla_{X_b^V}X_c^H=\tau^{aV}_{bc}X_a^H, \\
\notag &\hnabla_{X_b^H}X_c^V=\tau^{aH}_{bc}X_a^V,  & &\hnabla_{X_b^H}X_c^H=\tau^{aH}_{bc}X_a^H.
\end{align}
We also have
\begin{align*}
A_\Gamma(X_b^V)&= A_\Gamma(X_b^a V_a)= X_b^a H_a =X_b^H,\\
A_\Gamma(X_b^H)&= A_\Gamma(X_b^a H_a)= -X_b^a\phi^c_a V_c =-\lambda_b X_b^c V_c=-\lambda_b X_b^V.
\end{align*}

We will now calculate the exterior derivatives of the eigenforms $\phi^{aV}$ and $\phi^{aH}$ by using the identity $d\phi(X,Y)=X(\phi(Y))-Y(\phi(X))-\phi([X,Y])$ and the bracket identities \eqref{bracket-iden-a}-\eqref{bracket-iden-e} as follows
\begin{align*}
 d\phi^{aV}(\Gamma, X_b^V) &= -\phi^{aV}(-X_b^H+\tau_b^{d\Gamma}X_d^V)=-\tau_b^{a\Gamma},\\
 d\phi^{aV}(\Gamma, X_b^H) &= -\phi^{aV}(\tau_b^{d\Gamma}X_d^H+\lambda_bX_b^V)=-\lambda_b \delta ^a_b,\\
 d\phi^{aV}(X_b^V,X_c^V)   &= -\phi^{aV}((\tau_{bc}^{dV}-\tau_{cb}^{dV})X_d^V)=\tau_{cb}^{aV}-\tau_{bc}^{aV},\\
 d\phi^{aV}(X_b^V,X_c^H)   &= -\phi^{aV}(\tau_{bc}^{dV}X_d^H-\tau_{cb}^{dH}X_d^V)=\tau_{cb}^{aH},\\
 d\phi^{aV}(X_b^H,X_c^H)   &= -\phi^{aV}((\tau_{bc}^{dH}-\tau_{cb}^{dH})X_d^H+\phi^{dV}(R(X_b^H,X_c^H))X_d^V)\\
                           &= -\phi^{aV}(R(X_b^H, X_c^H)).
\end{align*}

%And similarly for $ d\phi^{aH}$:
%\begin{align*}
% d\phi^{aH}(\Gamma, X_b^V) &= -\phi^{aH}(-X_b^H+\tau_b^{d\Gamma}X_d^V)=\delta ^a_b.\\
% d\phi^{aH}(\Gamma, X_b^H) &= -\phi^{aH}(\tau_b^{d\Gamma}X_d^H+\lambda_bX_b^V)=-\tau_b^{a\Gamma}. \\
% d\phi^{aH}(X_b^V,X_c^V)   &= 0.\\
% d\phi^{aH}(X_b^V,X_c^H)   &= -\phi^{aH}(\tau_{bc}^{dV}X_d^H-\tau_{cb}^{dH}X_d^V)=-\tau_{bc}^{aV}.\\
% d\phi^{aH}(X_b^H,X_c^H)   &= -\phi^{aH}((\tau_{bc}^{dH}-\tau_{cb}^{dH})X_d^H+\phi^{dV}(R(X_b^H,X_c^H))X_d^V),\\
%                            &= \tau_{cb}^{aH}-\tau_{bc}^{aH}.
%\end{align*}

And similarly for $ d\phi^{aH}$:
\begin{align*}
 d\phi^{aH}(\Gamma, X_b^V) &=\delta ^a_b, \quad
 d\phi^{aH}(\Gamma, X_b^H) =-\tau_b^{a\Gamma}, \\
 d\phi^{aH}(X_b^V,X_c^V)   &= 0, \quad d\phi^{aH}(X_b^V,X_c^H)=-\tau_{bc}^{aV},\\
 d\phi^{aH}(X_b^H,X_c^H)   &= \tau_{cb}^{aH}-\tau_{bc}^{aH}.
\end{align*}
Putting these components together then we get:
\begin{align}
 d\phi^{aV} = & -\tau^{a\Gamma}_b dt \wedge \phi^{bV} - \lambda_a dt \wedge \phi^{aH} + \tau_{cb}^{aH} \phi^{bV} \wedge \phi^{cH} + \tau_{cb}^{aV} \phi^{bV} \wedge \phi^{cV} \label{dphiaV}\\
  \notag            & - \frac{1}{2}\phi^{aV}(R(X_b^H, X_c^H)) \phi^{bH} \wedge \phi^{cH},\\
 d\phi^{aH} = & \ dt \wedge \phi^{aV} - \tau^{a\Gamma}_{b} dt \wedge \phi^{bH} + \tau_{cb}^{aH} \phi^{bH} \wedge \phi^{cH} - \tau_{bc}^{aV} \phi^{bV} \wedge \phi^{cH}, \label{dphiaH}
\end{align}

We will use the following equivalent two Frobenius integrability conditions on a co-distribution $D_a^\bot=Sp\{\phi^{aV},\phi^{aH}\}$ of (lifted) eigen-forms of $\bold \Phi.$ We use the fact that $\omega^a:=\phi^{aV}\wedge\phi^{aH}$ is a characterising form for $D_a^\bot$.
\begin{align}
&d\phi^{aV} \equiv 0 \ \text{and } d\phi^{aH} \equiv 0 \ (\text{mod } \phi^{aV}, \phi^{aH}), \label{comp-in-cond-1}\\
\text{equivalently,} \ &d\omega^a=\xi^a_a\wedge \omega^a \quad (\text{no sum on $a$}),\ \xi^a_a \in \textstyle{\bigwedge^1(E)}\ \text{i.e.,}\ d\omega^a \equiv 0 \ (\text{mod } \omega^{a}) \label{comp-in-cond-2}.
\end{align}
We note here that for the one-form $\xi^a_a$ as given in \eqref{comp-in-cond-2},
\begin{equation}\label{dxi-aa}
d\xi^a_a \equiv 0 \quad (\text{mod } \phi^{aV}, \phi^{aH}).
\end{equation}

By looking at the expressions for $d\phi^{aV}$ from \eqref{dphiaV} and $d\phi^{aH}$ from \eqref{dphiaH}, together with \eqref{comp-in-cond-1}  we get the following proposition.

\begin{propn}\label{comp-in-cond}
The necessary and sufficient conditions for an eigen co-distribution $D_a^\bot=Sp\{\phi^{aV},\phi^{aH}\}$ of $\bold \Phi$ to be (Frobenius) integrable are:
\begin{equation*}
\tau^{a\Gamma}_b=0,\ \tau^{aV}_{bc}=0,\ \tau^{aH}_{bc}=0,\ \phi^{aV}(R(X_b^H,X_c^H))=0
\end{equation*}
for all $b, c \neq a.$
\end{propn}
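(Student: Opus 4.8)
The plan is to read off the integrability conditions directly from the structure equations \eqref{dphiaV} and \eqref{dphiaH}, using the characterisation \eqref{comp-in-cond-1} that $D_a^\bot$ is Frobenius integrable if and only if $d\phi^{aV}$ and $d\phi^{aH}$ both vanish modulo the ideal generated by $\phi^{aV}$ and $\phi^{aH}$ (for a fixed index $a$, no sum). So I would fix $a$ throughout, substitute into \eqref{dphiaV} and \eqref{dphiaH}, and discard every term that already contains a factor $\phi^{aV}$ or $\phi^{aH}$; what survives must then be required to vanish identically, and collecting the coefficients of the independent surviving 2-forms will yield exactly the four stated conditions.

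Concretely, in $d\phi^{aH}$ from \eqref{dphiaH} the term $dt\wedge\phi^{aV}$ is in the ideal, as is $\tau^{a\Gamma}_b\,dt\wedge\phi^{bH}$ for $b=a$; the terms $\tau^{aH}_{cb}\phi^{bH}\wedge\phi^{cH}$ with $b=a$ or $c=a$ are in the ideal, and likewise $\tau^{aV}_{bc}\phi^{bV}\wedge\phi^{cH}$ with $b=a$ or $c=a$. What remains modulo $\phi^{aV},\phi^{aH}$ is
\[
-\tau^{a\Gamma}_b\,dt\wedge\phi^{bH}\ +\ \tau^{aH}_{cb}\,\phi^{bH}\wedge\phi^{cH}\ -\ \tau^{aV}_{bc}\,\phi^{bV}\wedge\phi^{cH},\qquad b,c\neq a,
\]
and since $\{dt,\phi^{bV},\phi^{bH}\}$ is a basis the 2-forms $dt\wedge\phi^{bH}$, $\phi^{bH}\wedge\phi^{cH}$, $\phi^{bV}\wedge\phi^{cH}$ (all indices $\neq a$) are independent, forcing $\tau^{a\Gamma}_b=0$, $\tau^{aH}_{cb}=0$ (equivalently $\tau^{aH}_{bc}=0$ after antisymmetrising), and $\tau^{aV}_{bc}=0$ for all $b,c\neq a$. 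Running the same reduction on $d\phi^{aV}$ from \eqref{dphiaV}: the $dt\wedge\phi^{aV}$, $dt\wedge\phi^{aH}$ terms and all terms with a surviving $\phi^{aV}$ or $\phi^{aH}$ factor drop out, leaving
\[
-\tau^{a\Gamma}_b\,dt\wedge\phi^{bV}\ +\ \tau^{aH}_{cb}\,\phi^{bV}\wedge\phi^{cH}\ +\ \tau^{aV}_{cb}\,\phi^{bV}\wedge\phi^{cV}\ -\ \tfrac12\phi^{aV}(R(X_b^H,X_c^H))\,\phi^{bH}\wedge\phi^{cH},\qquad b,c\neq a,
\]
which contributes the new condition $\phi^{aV}(R(X_b^H,X_c^H))=0$ for $b,c\neq a$ and merely repeats $\tau^{a\Gamma}_b=0$, $\tau^{aH}_{cb}=0$, $\tau^{aV}_{cb}=0$. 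Conjunction of the two lists gives precisely the four conditions claimed, and conversely if all four hold then every surviving term above vanishes, so \eqref{comp-in-cond-1} holds and $D_a^\bot$ is integrable by Frobenius.

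The only genuinely delicate point is bookkeeping: one must be careful which index ranges are excluded (the ``$b,c\neq a$'' restriction comes exactly from throwing away ideal terms) and must antisymmetrise $\tau^{aH}_{bc}$ and $\tau^{aV}_{bc}$ consistently, since in \eqref{dphiaV} and \eqref{dphiaH} they appear contracted against wedge products in slightly different index orders; the torsion-type terms and the curvature term never mix because they sit on different pairs of basis 2-forms ($\phi^{bV}\wedge\phi^{cV}$, $\phi^{bV}\wedge\phi^{cH}$, $\phi^{bH}\wedge\phi^{cH}$), so no cancellation between them can occur and each coefficient must vanish on its own. I do not expect to need \eqref{comp-in-cond-2} or \eqref{dxi-aa} for this proof — they are the alternative formulation and a derived consequence — so the argument rests entirely on the explicit structure equations already established.
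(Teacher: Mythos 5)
Your proof is correct and takes essentially the same approach as the paper, which offers no written argument beyond asserting that the proposition follows ``by looking at'' \eqref{dphiaV} and \eqref{dphiaH} together with \eqref{comp-in-cond-1}; you have simply carried out that inspection in detail. The one point worth keeping straight (which you flag yourself) is that the unrestricted conditions $\tau^{aV}_{bc}=0$ and $\tau^{aH}_{cb}=0$ for all $b,c\neq a$ are forced by the mixed $\phi^{bV}\wedge\phi^{cH}$ terms, where no antisymmetrisation occurs, not by the antisymmetrised $\phi^{bV}\wedge\phi^{cV}$ and $\phi^{bH}\wedge\phi^{cH}$ terms alone.
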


%\begin{propn}\label{conseq-of-in-cond}
%If an eigenform-distribution $D_a^\bot=Sp\{\phi^{aV},\phi^{aH}\}$ of $\bold \Phi$ is (Frobenius) integrable, that \question{add this prop, check!} is $d(\phi^{aV}\wedge \phi^{aH})=\xi^a_a\wedge \phi^{aV}\wedge \phi^{aH}$, for some 1-form $\xi^a_a$ then $d\xi^a_a \equiv 0$ (mod $\phi^{aV}, \phi^{aH}$).
%\end{propn}
%\begin{proof}
%Taking the exterior derivative of both sides of $d(\phi^{aV}\wedge \phi^{aH})=\xi^a_a \wedge \phi^{aV}\wedge \phi^{aH}$ we get
%\begin{align*}
%&0=d\xi^a_a \wedge \phi^{aV}\wedge \phi^{aH}+\xi^a_a \wedge d(\phi^{aV}\wedge \phi^{aH})\\
%\Leftrightarrow\quad&0=d\xi^a_a \wedge \phi^{aV}\wedge \phi^{aH}\\
%\Leftrightarrow\quad&d\xi^a_a \equiv 0 \quad (\text{mod } \phi^{aV},\phi^{aH})
%\end{align*}
%\end{proof}
\begin{defn}\label{ID-def}
Let $D_a^\bot:=Sp\{\phi^{aV},\phi^{aH}\}$ be an eigen co-distribution of $\bold \Phi$. A 1-form $\alpha\in D_a^\bot$ is said to be {\em an integrable direction in $D_a^\bot$} if
\begin{align}
d\alpha=\kappa\wedge\alpha,\label{integrable-direction-def-1}
\end{align}
for some 1-form $\kappa.$
\end{defn}
It can be seen from the expressions of the exterior derivatives of $\phi^{aV}$ in \eqref{dphiaV} and $\phi^{aH}$ in \eqref{dphiaH} that the 1-forms $\phi^{aV}$ may be integrable but $\phi^{aH}$ can't be. Thus we can express an integrable direction $\alpha_a$ in $D_a^\bot$ as $\alpha_a=\phi^{aV}+B_a\phi^{aH}$.

\begin{propn}\label{ID-cond}
The necessary and sufficient conditions for the existence of an integrable direction $\alpha_a=\phi^{aV}+B_a\phi^{aH}$, that is $d\alpha_a=\kappa_a\wedge\alpha_a$, in an eigen co-distribution $D_a^\bot=Sp\{\phi^{aV},\phi^{aH}\}$ of $\bold \Phi$ are, for $b,c \neq a$:
\begin{align}
&\tau^{a\Gamma}_b=0,  \label{integrable-direction-cond-1}\\
&\tau^{aH}_{cb}-B_a\tau^{aV}_{bc}=0, \label{integrable-direction-cond-2}\\
&\tau^{aV}_{cb}=\tau^{aV}_{bc}, \label{integrable-direction-cond-3}\\
&\phi^{aV}(R(X_b^H,X_c^H))+B_a(\tau^{aH}_{cb}-\tau^{aH}_{bc})=0, \label{integrable-direction-cond-4}\\
&\Gamma(B_a)=(B_a)^2+\lambda_a, \label{integrable-direction-cond-5}\\
&X_b^V(B_a)=B_a\tau^{aV}_{ab}-\tau^{aH}_{ab}, \label{integrable-direction-cond-6}\\
&X_b^H(B_a)=\phi^{aV}(R(X_b^H,X_a^H))+B_a(B_a\tau^{aV}_{ab}-\tau^{aH}_{ab}), \label{integrable-direction-cond-7}
\end{align}
and $\kappa_a$ is given by
\begin{align}
\label{ID-kappaa}\kappa_a=&(\tau^{a\Gamma}_a+B_a) dt +(\tau^{aV}_{ab}-\tau^{aV}_{ba})\phi^{bV}+(B_a\tau^{aV}_{ab}-\tau^{aH}_{ba})\phi^{bH}\\
\notag &\quad +(B_a\tau^{aV}_{aa}-X_a^V(B_a)-\tau^{aH}_{aa})\phi^{aH}.
\end{align}
\end{propn}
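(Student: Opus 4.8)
The plan is to expand the equation $d\alpha_a = \kappa_a \wedge \alpha_a$ directly in the adapted coframe $\{dt, \phi^{bV}, \phi^{bH}\}$ and match coefficients component by component. First I would write $\alpha_a = \phi^{aV} + B_a\phi^{aH}$ (no sum on $a$) and compute $d\alpha_a = d\phi^{aV} + dB_a \wedge \phi^{aH} + B_a\, d\phi^{aH}$, substituting the explicit formulas \eqref{dphiaV} and \eqref{dphiaH}. I would also expand $dB_a = \Gamma(B_a)\,dt + X_b^V(B_a)\phi^{bV} + X_b^H(B_a)\phi^{bH}$ using the dual basis. The right-hand side $\kappa_a \wedge \alpha_a$ is bilinear: writing a general 1-form $\kappa_a = k_0\,dt + k_{bV}\phi^{bV} + k_{bH}\phi^{bH}$, the wedge with $\phi^{aV} + B_a\phi^{aH}$ produces terms in $dt\wedge\phi^{aV}$, $dt\wedge\phi^{aH}$, $\phi^{bV}\wedge\phi^{aV}$, $\phi^{bV}\wedge\phi^{aH}$, $\phi^{bH}\wedge\phi^{aV}$, $\phi^{bH}\wedge\phi^{aH}$, and cross terms. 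The strategy is then to collect both sides on the basis of 2-forms and read off one scalar equation per basis 2-form.

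The key bookkeeping step is to separate the basis 2-forms into those that \emph{do} contain a factor of $\phi^{aV}$ or $\phi^{aH}$ (call these the ``$a$-adapted'' ones) and those that involve only indices $b,c \neq a$. Since $\kappa_a\wedge\alpha_a$ lies in the ideal generated by $\alpha_a$, hence in the ideal generated by $\phi^{aV}, \phi^{aH}$, every 2-form on the right-hand side is $a$-adapted. Therefore the coefficient of every ``purely non-$a$'' 2-form in $d\alpha_a$ must vanish; these give the constraint equations that do not involve $B_a$-derivatives. Concretely: the $dt\wedge\phi^{bV}$ coefficient (for $b\neq a$) gives $\tau^{a\Gamma}_b = 0$, which is \eqref{integrable-direction-cond-1}; the $\phi^{bV}\wedge\phi^{cH}$ coefficient with $b,c\neq a$ gives $\tau^{aH}_{cb} - B_a\tau^{aV}_{bc} = 0$, which is \eqref{integrable-direction-cond-2}; the $\phi^{bV}\wedge\phi^{cV}$ coefficient with $b,c\neq a$ forces the symmetry $\tau^{aV}_{cb} = \tau^{aV}_{bc}$, which is \eqref{integrable-direction-cond-3}; and the $\phi^{bH}\wedge\phi^{cH}$ coefficient with $b,c\neq a$ gives \eqref{integrable-direction-cond-4}, the curvature condition with the $B_a(\tau^{aH}_{cb}-\tau^{aH}_{bc})$ correction coming from $B_a\,d\phi^{aH}$.

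Next I would extract the equations that \emph{define} the derivatives of $B_a$. The coefficient of $dt\wedge\phi^{aH}$: on the left it collects $-\lambda_a$ (from $d\phi^{aV}$), $-\Gamma(B_a)$ (from $dB_a\wedge\phi^{aH}$, with a sign from reordering), $-B_a\tau^{a\Gamma}_a$ (from $B_a\,d\phi^{aH}$), while on the right it is $k_0 B_a - (\text{coefficient from }k_{aH}\text{ via }dt\wedge\phi^{aH})$; combined with the $dt\wedge\phi^{aV}$ coefficient (which also fixes part of $\kappa_a$) one is forced into \eqref{integrable-direction-cond-5}, $\Gamma(B_a) = B_a^2 + \lambda_a$, after eliminating the $dt$-component $k_0 = \tau^{a\Gamma}_a + B_a$ of $\kappa_a$. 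Similarly, the $\phi^{bV}\wedge\phi^{aH}$ coefficient (for $b\neq a$) yields \eqref{integrable-direction-cond-6} for $X_b^V(B_a)$, and the $\phi^{bH}\wedge\phi^{aH}$ coefficient (for $b\neq a$) yields \eqref{integrable-direction-cond-7} for $X_b^H(B_a)$, in each case after reading off the corresponding component of $\kappa_a$. Finally, the coefficients of the 2-forms $\phi^{aV}\wedge\phi^{bV}$, $\phi^{aV}\wedge\phi^{bH}$ and $\phi^{aV}\wedge\phi^{aH}$ determine the remaining components $k_{bV}, k_{bH}$ and the $\phi^{aH}$-component of $\kappa_a$, giving precisely formula \eqref{ID-kappaa}; note the $\phi^{aH}$-component absorbs $X_a^V(B_a)$, which is why that derivative does not appear as a separate constraint.

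I expect the main obstacle to be the careful sign and reordering bookkeeping: many basis 2-forms such as $\phi^{bV}\wedge\phi^{aH}$ versus $\phi^{aH}\wedge\phi^{bV}$ appear with opposite orientation on the two sides, and the $B_a$-linear terms from $B_a\,d\phi^{aH}$ interleave with the $dB_a\wedge\phi^{aH}$ terms, so one must be scrupulous about which coefficient lands on which normalized basis element. A secondary subtlety is confirming that the six scalar components of $\kappa_a$ read off from the ``$a$-adapted'' equations are mutually consistent — i.e. that each is determined exactly once — and that the leftover equations are genuinely $B_a$-free constraints; this consistency is really the assertion that \eqref{integrable-direction-cond-1}--\eqref{integrable-direction-cond-7} together are not just necessary but also sufficient, so that plugging \eqref{ID-kappaa} back in reproduces $d\alpha_a$ identically. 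Once the coefficient comparison is organized by the partition of basis 2-forms described above, each individual identity is a short computation.
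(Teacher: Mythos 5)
Your proposal is correct and is essentially the paper's own argument in dual form: the paper evaluates $d\alpha_a(X,Y)$ on the adapted vector-field basis $\{\Gamma, X_a^V, X_a^H-B_aX_a^V, X_b^V, X_b^H\}$ (so that $\alpha_a$ annihilates every basis element except $X_a^V$), which is exactly your coefficient-matching in the 2-form basis with the $\phi^{aH}$-coefficients tied to the $\phi^{aV}$-coefficients by the factor $B_a$. The partition you describe — vanishing of the components not involving the $a$-direction giving \eqref{integrable-direction-cond-1}–\eqref{integrable-direction-cond-7}, and the remaining components reading off $\kappa_a$ up to the usual ambiguity modulo $\alpha_a$ — matches the paper's proof step for step.
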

\begin{proof}
Let $\alpha_a=\phi^{aV}+B_a\phi^{aH} \in Sp\{\phi^{aV},\phi^{aH}\}$ be an integrable direction. By definition \ref{ID-def},
\begin{align*}
d\alpha_a=\kappa_a \wedge\alpha_a.
\end{align*}
We apply the formula for the exterior derivative of a 1-form:
\begin{align*}
d\alpha_a(X,Y)=X(\alpha_a(Y))-Y(\alpha_a(X))-\alpha_a([X,Y])
\end{align*}
to the basis of vector fields $\{\Gamma, X_a^V,X_a^H-B_aX_a^V, X_b^V, X_b^H ; \forall b \neq a\}$. $d\alpha_a(X,Y)$ is zero for basis pairs $(X,Y)$ when neither is $X_a^V$. This gives, for $b,c \neq a$,
\begin{align*}
&0=d\alpha_a(\Gamma, X_b^V)=-\tau^{a\Gamma}_b,\\
&0=d\alpha_a(\Gamma, X_b^H)=B_a\tau^{a\Gamma}_b,\\
&0=d\alpha_a(X_b^V, X_c^V)=\tau^{aV}_{bc}-\tau^{aV}_{cb},\\
&0=d\alpha_a(X_b^H, X_c^V)=B_a\tau^{aV}_{cb}-\tau^{aH}_{bc},\\
&0=d\alpha_a(X_b^H, X_c^H)=-(B_a(\tau^{aH}_{bc}-\tau^{aH}(B_a)^2,\\
&0=d\alpha_a(X_b^V, X_a^H-B_a X_a^V)=X_b^V(B_a)-B_a\tau^{aV}_{ab}+\tau^{aH}_{ab}),\\
&0=d\alpha_a(X_b^H, X_a^H-B_a X_a^V)=X_b^H(B_a)-B_a(B_a\tau^{aV}_{ab}-\tau^{aH}_{ab})-\phi^{aV}(R(X_b^H,X_a^H)).
\end{align*}
These immediately give the conditions \eqref{integrable-direction-cond-1}-\eqref{integrable-direction-cond-7}.
%By looking at equation \eqref{dalphaa}, we can see that all pairs $(X,Y)$, where $X$ and $Y$ are in the basis and neither of them is $X^V_a$, give zero as acting on the right hand side of the equation \eqref{dalphaa} and thus give a set of conditions for the equation to be satisfied as acting on the left hand side, while the pairs $(Y,X_a^V)$ give the expression for $\kappa_a$. For $a \neq b$ we have:
For the remainder we have:
\begin{align*}
&d\alpha_a(\Gamma, X_a^V)=\tau^{a\Gamma}_a+B_a,\\
&d\alpha_a(X_b^V, X_a^V)=\tau^{aV}_{ab}-\tau^{aV}_{ba},\\
&d\alpha_a(X_b^H, X_a^V)=B_a\tau^{aV}_{ab}-\tau^{aH}_{ba},\\
&d\alpha_a(X_a^H-B_aX_a^V, X_a^V)=-X_a^V(B_a)+B_a\tau^{aV}_{aa}-\tau^{aH}_{aa}.
\end{align*}
These give the formula for $\kappa_a$ as in \eqref{ID-kappaa}.\\
Conversely, if conditions \eqref{integrable-direction-cond-1}-\eqref{integrable-direction-cond-7} hold, then $d\alpha_a(X,Y)=0$ for all basis pairs $(X,Y)$ when neither is $X_a^V$ and thus $d\alpha_a=\kappa_a \wedge \alpha_a$ as expected. %\question{add last sentence, need or no need? need!}
\end{proof}
\subsection{Jacobi identities in the inverse problem}

The $\tau$'s defined in \eqref{tau-defn} are not independent and we now examine their relations. To this end we apply the Jacobi identity
$$[X,[Y,Z]]+[Y,[Z,X]]+[Z,[X,Y]]=0$$
to the basis $\{\Gamma, X_a^V, X_b^H\}$. This results in the following identities which are very useful for later calculations.\\ %\question{add this sentence, yes!} \\
(We remark in passing that the Bianchi identities for $\hat\nabla$ are redundant in the presence of the Jacobi identities ( see \cite{DP14}) and we do not consider them here.)

Applying this to the triple $\Gamma, X_b^V$ and $X_c^H$ we have,\\
\begin{equation*} [\Gamma,[X_b^V,X_c^H]]+[X_b^V,[X_c^H,\Gamma]]+[X_c^H,[\Gamma,X_b^V]]=0.
\end{equation*}
%Calculating each bracket I get,
%\begin{subequations} \label{trip-bracket-iden-1}
%\begin{align}
%\label{trip-bracket-iden-1-a}[\Gamma,[X_b^V,X_c^H] &=[\Gamma(\tau_{bc}^{aV})+\tau_{bc}^{eV}\tau_e^{a\Gamma}+\tau_{cb}^{aH}]X_a^H\\
%\notag                                              &+[-\Gamma(\tau_{cb}^{aH})+\tau_{bc}^{aV}\lambda_a-\tau_{cb}^{eH}\tau_e^{a\Gamma}]X_a^V.\\
%\label{trip-bracket-iden-1-b}[X_b^V,[X_c^H,\Gamma]] &=(-X_b^V(\tau_c^{a\Gamma})-\tau_c^{e\Gamma}\tau_{be}^{aV})X_a^H\\
%\notag                                                   &+(\tau_c^{e\Gamma}\tau_{eb}^{aH}-\lambda_c(\tau_{bc}^{aV}-\tau_{cb}^{aV})X_a^V\\
%\notag                                                 &+(-X_b^V(\lambda_c))X_c^V.\\
%\label{trip-bracket-iden-1-c}[X_c^H,[\Gamma,X_b^V]] &=(\tau_{bc}^{aH}-\tau_{cb}^{aH}-\tau_b^{e\Gamma}\tau_{ec}^{aV})X_a^H\\
%\notag                                                   &+(\phi^{aV}(R(X_b^H,X_c^H))+X_c^H(\tau_b^{a\Gamma})+\tau_b^{e\Gamma}\tau_{ce}^{aH})X_a^V.
%\end{align}
%\end{subequations}
%Substituting \eqref{trip-bracket-iden-1-a}, \eqref{trip-bracket-iden-1-b} and \eqref{trip-bracket-iden-1-c} into \eqref{bracket-comm-1} we get 2 identities,
This gives following identities
\begin{subequations} \label{bracket-iden-1}
\begin{align}
\label{bracket-iden-1-a} \Gamma(\tau_{bc}^{aV}) &=-\tau_{bc}^{eV}\tau_e^{a\Gamma}-\tau_{bc}^{aH}+
                                         X_b^V(\tau_c^{a\Gamma})+\tau_c^{e\Gamma}\tau_{be}^{aV}+\tau_{ec}^{aV}\tau_b^{e\Gamma},\\
\label{bracket-iden-1-b} \Gamma(\tau_{cb}^{aH})&=-\tau_{cb}^{eH}\tau_e^{a\Gamma}+\tau_c^{e\Gamma}\tau_{eb}^{aH}-X_c^H(\tau_b^{a\Gamma})+\tau_b^{e\Gamma}\tau_{ce}^{aH}-X_b^V(\lambda_c)\delta^a_c\\
\notag                                         &+\lambda_c(\tau_{cb}^{aV}-\tau_{bc}^{aV})+\lambda_a\tau^{aV}_{bc}+\phi^{aV}(R(X_b^H,X_c^H)).
%\label{bracket-iden-1-c} \Gamma(\tau_{cb}^{aH})&=-\tau_{cb}^{eH}\tau_e^{a\Gamma}+\tau_c^{e\Gamma}\tau_{eb}^{aH}-\lambda_c(\tau_{bc}^{aV}-\tau_{cb}^{aV})+
%                                         X_c^H(\tau_b^{a\Gamma})\\
%\notag                                         &+\tau_b^{e\Gamma}\tau_{ce}^{aH}+\lambda_a\tau_{bc}^{aV}+\phi^{aV}(R(X_b^H,X_c^H)). \qquad (a \neq c)
\end{align}
\end{subequations}

Applying the Jacobi identity to the triple $(\Gamma, X_b^V,X_c^V)$ gives no new identities since the expression that we get from this is
$$\Gamma(\tau_{bc}^{aV}-\tau_{cb}^{aV})=\tau_e^{a\Gamma}(\tau_{cb}^{eV}-\tau_{bc}^{eV})+\tau_{cb}^{aH}-\tau_{bc}^{aH}+
                                         X_b^V(\tau_c^{a\Gamma})-X_c^V(\tau_b^{a\Gamma})+\tau_c^{e\Gamma}(\tau_{be}^{aV}-\tau_{eb}^{aV})-\tau_b^{e\Gamma}(\tau_{ce}^{aV}-\tau_{ec}^{aV})$$

which can be obtained by using \eqref{bracket-iden-1-a}.\\
Applying the Jacobi identity to other triples we get further identities:\\
for $(\Gamma, X_b^H,X_c^H)$ we obtain
\begin{subequations}
\begin{align}
\label{bracket-iden-2-a} \Gamma(\phi^{aV}(R(X_b^H,X_c^H)))&=\tau_c^{e\Gamma}\phi^{aV}(R(X_b^H,X_e^H))+\tau_b^{e\Gamma}\phi^{aV}(R(X_e^H,X_c^H))\\
\notag                                                   &-\tau_e^{a\Gamma}\phi^{eV}(R(X_b^H,X_c^H))+(\lambda_c-\lambda_a)\tau_{bc}^{aH}\\
\notag                                                   &-(\lambda_b-\lambda_a)\tau_{cb}^{aH}+X_b^H(\lambda_c)\delta^a_c-X_C^H(\lambda_b)\delta^a_b, \\
\label{bracket-iden-2-b} \Gamma(\tau_{bc}^{aH}-\tau_{cb}^{aH})&=-(\tau_{bc}^{eH}-\tau_{cb}^{eH})\tau_e^{a\Gamma}+\tau_c^{e\Gamma}(\tau_{be}^{aH}-\tau_{eb}^{aH})-\lambda_c\tau_{cb}^{aV}\\
\notag                                                       &+\lambda_b\tau_{bc}^{aV})+ X_c^H(\tau_b^{a\Gamma})-X_b^H(\tau_c^{a\Gamma})\\
\notag                                                       &-\tau_b^{e\Gamma}(\tau_{ce}^{aH}-\tau_{ec}^{aH})+\phi^{aV}(R(X_b^H,X_c^H)).
\end{align}
\end{subequations}
Substituting $\Gamma(\tau_{bc}^{aH})$ and $\Gamma(\tau_{cb}^{aH})$ from \eqref{bracket-iden-1-b} into \eqref{bracket-iden-2-b} we have
\begin{align} \label{bracket-iden-3}
3\phi^{aV}(R(X_b^H,X_c^H))=X_b^V(\lambda_c)\delta^a_c-X_c^V(\lambda_b)\delta^a_b+\tau^{aV}_{bc}(\lambda_c-\lambda_a)-\tau^{aV}_{cb}(\lambda_b-\lambda_a).
\end{align}
For the triple $(X_a^V, X_b^V,X_c^H)$ we have
\begin{subequations} \label{bracket-iden-3}
\begin{align}
\label{bracket-iden-3-a} X_a^V(\tau_{bc}^{dV})-X_b^V(\tau_{ac}^{dV})&=\tau_{ac}^{eV}\tau_{be}^{dV}-\tau_{bc}^{eV}\tau_{ae}^{dV}+\tau_{ec}^{dV}(\tau_{ab}^{eV}-\tau_{ba}^{eV}),\\
\label{bracket-iden-3-b} X_a^V(\tau_{cb}^{dH})-X_b^V(\tau_{ca}^{dH})&=X_c^H(\tau_{ab}^{dV}-\tau_{ba}^{dV})-\tau_{bc}^{eV}\tau_{ea}^{dH}+\tau_{ac}^{eV}\tau_{eb}^{dH}\\
\notag                                                               &+\tau_{ce}^{dH}(\tau_{ab}^{eV}-\tau_{ba}^{eV})+\tau_{ca}^{eH}(\tau_{be}^{dV}-\tau_{eb}^{dV})\\
\notag                                                               &-\tau_{cb}^{eH}(\tau_{ae}^{dV}-\tau_{ea}^{dV}).
\end{align}
\end{subequations}
Applying the Jacobi identity to $(X_a^H, X_b^V,X_c^H)$ we get
\begin{subequations} \label{bracket-iden-4}
\begin{align}
\label{bracket-iden-4-a} X_a^H(\tau_{bc}^{dV})-X_c^H(\tau_{ba}^{dV})&=X_b^V(\tau_{ac}^{dH}-\tau_{ca}^{dH})-\tau_{bc}^{eV}(\tau_{ae}^{dH}-\tau_{ea}^{dH})\\
\notag                                                                   &-\tau_{cb}^{eH}\tau_{ea}^{dV}-(\tau_{ca}^{eH}-\tau_{ac}^{eH})\tau^{dV}_{be}+\tau^{eV}_{ba}(\tau^{dH}_{ce}-\tau^{dH}_{ec})+\tau^{eH}_{ab}\tau^{dV}_{ec},\\
\label{bracket-iden-4-b} X_a^H(\tau_{cb}^{dH})-X_c^H(\tau_{ab}^{dH})&=X_b^V(\phi^{dV}(R(X_c^H,X_a^H)))+\tau_{bc}^{eV}\phi^{dV}(R(X_a^H,X_e^H))\\
\notag                                                              &-\tau_{cb}^{eH}\tau_{ae}^{dH}+\phi^{eV}(R(X_c^H,X_a^H))(\tau_{be}^{dV}-\tau_{eb}^{dV})\\
\notag                                                              &+\tau_{ab}^{eH}\tau_{ce}^{dH}-\tau_{eb}^{dH}(\tau_{ca}^{eH}-\tau_{ac}^{eH})-\tau_{ba}^{eV}\phi^{dV}(R(X_c^H,X_e^H)).
\end{align}
\end{subequations}

For the triple $(X_a^V, X_b^V,X_c^V)$ we get only one new identity
\begin{align} \label{bracket-iden-5}
&X_a^V(\tau^{dV}_{bc}-\tau^{dV}_{cb})+X_b^V(\tau^{dV}_{ca}-\tau^{dV}_{ac})+X_c^V(\tau^{dV}_{ab}-\tau^{dV}_{ba})\\
\notag &=-(\tau^{dV}_{ae}-\tau^{dV}_{ea})(\tau^{eV}_{bc}-\tau^{eV}_{cb})-(\tau^{dV}_{be}-\tau^{dV}_{eb})(\tau^{eV}_{ca}-\tau^{eV}_{ac})\\
\notag  &-(\tau^{dV}_{ce}-\tau^{dV}_{ec})(\tau^{eV}_{ab}-\tau^{eV}_{ba}).
\end{align}
Finally, applying the Jacobi identity to $(X_a^H, X_b^H,X_c^H),$ we have two more identities
\begin{subequations}\label{bracket-iden-6}
\begin{align}
\label{bracket-iden-6-a} &X_a^H(\tau^{dH}_{bc}-\tau^{dH}_{cb})+X_b^H(\tau^{dH}_{ca}-\tau^{dH}_{ac})+X_c^H(\tau^{dH}_{ab}-\tau^{dH}_{ba})\\
\notag &=\tau_{ea}^{dV}\phi^{eV}(R(X_b^H,X_c^H))+\tau_{eb}^{dV}\phi^{eV}(R(X_c^H,X_a^H))\\
\notag &+\tau_{ec}^{dV}\phi^{eV}(R(X_a^H,X_b^H))-(\tau^{dH}_{ae}-\tau^{dH}_{ea})(\tau^{eH}_{bc}-\tau^{eH}_{cb})\\
\notag  &-(\tau^{dH}_{be}-\tau^{dH}_{eb})(\tau^{eH}_{ca}-\tau^{eH}_{ac})-(\tau^{dH}_{ce}-\tau^{dH}_{ec})(\tau^{eH}_{ab}-\tau^{eH}_{ba}),\\
\label{bracket-iden-6-b} &X_a^H(\phi^{dV}(R(X_b^H,X_c^H)))+X_b^H(\phi^{dV}(R(X_c^H,X_a^H)))\\
\notag                   &+X_c^H(\phi^{dV}(R(X_a^H,X_b^H)))=(\tau^{eH}_{cb}-\tau^{eH}_{bc})\phi^{dV}(R(X_a^H,X_e^H))\\
\notag                   &+(\tau^{eH}_{ac}-\tau^{eH}_{ca})\phi^{dV}(R(X_b^H,X_e^H))+(\tau^{eH}_{ba}-\tau^{eH}_{ab})\phi^{dV}(R(X_c^H,X_e^H))\\
\notag                   &-\tau^{dH}_{ae}\phi^{eV}(R(X_b^H,X_c^H))-\tau^{dH}_{be}\phi^{eV}(R(X_c^H,X_a^H))-\tau^{dH}_{ce}\phi^{eV}(R(X_a^H,X_b^H)).
\end{align}
\end{subequations}

\subsection{EDS and the inverse problem}\label{EDS-inverse}       %\question{edited this section, check!checked}
The idea of looking for closed 2-forms leads to the use of EDS method for the inverse problem. The EDS references are the book \cite{Bryant91} in general and the memoir \cite{AT92} particularly for the inverse problem. In the first part of this section we give a brief description of the EDS approach to the inverse problem. The second part is devoted to significant results regarding the so-called differential ideal step of EDS process.
\subsubsection{EDS approach to the inverse problem}
According to  Anderson and Thompson in \cite{AT92}, the EDS process for the inverse problem involves three steps. We start with the submodule of 2-forms $\Sigma$. The first step, namely the differential ideal step, is to look for a final submodule, $\Sigma^f,$ of $\Sigma$ that generates a differential ideal. The second step is to create an equivalent linear Pfaffian system for the closed 2-forms, and final step is to determine the generality of the solution of the problem by using the Cartan-K\"{a}hler theorem.\\

The differential ideal step is a recursive process which produces from $\Sigma^0:=\Sigma$ a sequence of submodules $\Sigma^0\supset\Sigma^1\supset\Sigma^2\supset...$. Each stage of the process involves calculating the exterior derivative of forms belonging to some submodule $\Sigma^i$  and then checking whether these 3-forms belong to the ideal generated by that submodule, which results in a restriction on the admissible 2-forms which form a submodule $\Sigma^{i+1}$ and then the process is repeated from this submodule and so on until a final differential ideal, $\langle\Sigma^f\rangle$, is found, i.e. $\Sigma^f=\Sigma^{f+1}$, or the trivial set is reached. If it is impossible to create a maximal rank two-form at any stage during this process, the problem has no regular solution.\\

Suppose that a differential ideal generated by $\Sigma^f$ is found,
the next step in the EDS process is to express the problem of finding the closed
2-forms in $\Sigma^f$ as a Pfaffian system. We will give a brief outline of this step, see \cite{AT92}, \cite{KP08} or  \cite{Bryant91} for details.\\
 Let the differential ideal
$\langle\Sigma^f\rangle$ be generated by the set of 2-forms, not necessary simple, $\{\bar\omega^k\}, \ k \in \{1,...,d\}$, and calculate
$$
d\bar\omega^k = \bar\xi^k_h \wedge \bar\omega^h.
$$
where the $\bar\xi^i_j$'s are now known one-forms.

Since $\omega \in \Sigma^f=Sp\{\bar\omega^k\}$, $d\omega = \beta_j \wedge \bar\omega^j$,
and because we are looking for those $\omega$'s such that $d\omega = 0$ the next step is to
find all possible $d$-tuples of one forms $(\rho^A_k) = (\rho^A_1,...,\rho^A_d)$
such that $\rho^A_k \wedge \bar\omega^k = 0$. Once all the $d$-tuples $\rho^A_k \ A \in
\{1,...,e\}$ have been found, the inverse problem becomes that of finding the functions $r_k$
which satisfy the Pfaffian system of equations:

\begin{align}
\label{pfaffian1} d r_k + r_h \bar\xi^h_k + p_A \rho^A_k = 0,
\end{align}

for some arbitrary functions $p_A$. The freedom in the
choice of these $p_A$'s will be then exploited in the final part of the EDS procedure.\\

The general method for finding the solution for this problem in EDS is to define an
extended manifold $N = E \otimes \R^d \otimes
\R^e$ with co-ordinates $\{t,x^a,u^b,r_k, p_A\}, \ a,b \in \{1,...,n\}, k \in
\{1,...,d\}, A \in \{1,...,e\}$ and look for $2n+1$ dimensional submanifolds that are
sections over $E$ and on which the one forms
$$
\sigma_k := d r_k + r_h \bar\xi^h_k + p_A \rho^A_k
$$
are zero.

To find these manifolds, $\sigma_k$ are considered constraint forms for some
distribution on $N$, and the problem becomes that of looking for integral
manifolds arising from this distribution. To find these integral manifolds, we
choose a basis of forms on $N$, $\{\alpha_m, \sigma_k, \pi_A\}$ where
$\{\alpha_m\}$ are a pulled back basis for $E$, $\pi_A:= d p_A$,  and $\sigma_k$ as
defined above completes the basis.

The condition that we have sections over $E$ is that the form
$$ \alpha_1 \wedge ... \wedge \alpha_{2n+1}$$
be non-zero on the $2n+1$ dimensional integral manifolds given by the constraint
forms.

In the remainder of this section, we will give a brief outline of the process of
finding the generality of the solutions to this last problem, see \cite{AT92} or \cite{Bryant91} for details.

According to \cite{AT92}, to determine the existence and generality of the
solutions to \eqref{pfaffian1}, we calculate the exterior derivatives $d\sigma_k$
modulo the ideal generated by the forms $\sigma_k$
\begin{equation}
\label{dsig} d\sigma_k \equiv \pi_k^i \wedge \alpha_i + t^{ij}_k \alpha_i \wedge
\alpha_j \ (\text{mod } \sigma)
\end{equation}
where $\pi_k^i$ are some linear combinations of $\pi_A$.  As $d \sigma_k$ expands
with no $d p_A \wedge d p_B$ terms, the system is quasi-linear.

As we want the system to be a section over $E$, i.e $\alpha_1 \wedge ... \wedge
\alpha_{2n+1} \neq 0$ on the integral manifolds, we need to absorb all the
$\alpha_i \wedge \alpha_j$ terms into the $\pi^i_k \wedge \alpha_i$ terms. This is
done by changing the basis forms $\pi_A$ to $\bar\pi_A:= \pi_A - l^j_A \alpha_j$.
If any of the $\alpha_i \wedge \alpha_j$ terms can not be absorbed, then asking for
$d\sigma_k \equiv 0 \ (\text{mod } \sigma)$ is incompatible with the independence
condition and therefore there is no solution.

Once the $\alpha_i \wedge \alpha_j$ terms have been removed, the system
\begin{align}\label{dsigma-f}
d\sigma_k \equiv \pi_k^i \wedge \alpha_i  \ (\text{mod } \sigma)
\end{align}
is used to create the tableau $\Pi$ from which the Cartan characters, $s_1, s_2, ..., s_k,$ can be calculated allowing us to apply the Cartan test for involution.

\begin{center}
\renewcommand{\arraystretch}{1.25}
$\Pi = $
\begin{tabular}{c|c c c c}
& $\alpha_1$ & $\alpha_2$ & \dots & $\alpha_n$ \\ \hline
$\sigma_1$ & $\pi^1_1$ & $\pi^2_1$ & \dots & $\pi^n_1$ \\
$\sigma_2$ & $\pi^1_2$ & $\pi^2_2$ & \dots & $\pi^n_2$ \\
\vdots     & \vdots    & \vdots    &       & \vdots \\
$\sigma_d$ & $\pi^1_d$ & $\pi^2_d$ & \dots & $\pi^n_d$ \\
\end{tabular}
\end{center}

\noindent
The first character $s_1$ is the number of independent one
forms that can be chosen from first column of $\Pi$.  $s_2$ is the number of
independent forms in the second column that are also independent of all forms in
the first column. This is repeated for $s_3$ and onwards until all the independent
forms are exhausted. In computing the Cartan characters, the basis $\{\alpha_i\}$ is
chosen such that $s_1$ is as large as possible, and $s_2$ large as possible but
less the $s_1$, and so on.  In particular, $s_k$ must form a non-increasing
sequence of integers.

Once the Cartan characters are found, the Cartan test for involution is performed.

Let $t$ denote the number of ways in which the forms $\pi^i_k$ can be modified by using
$\bar\pi^A = \pi^A - l^{j}_A \alpha_j$, without changing \eqref{dsigma-f}. That is, $t$
is the dimension of the linear space of $e$-tuples of one forms
$(\tau_1,\tau_2,...,\tau_e)$ of the form: $\tau_A = l^j_A \alpha_j$ such that
$$
a^{iA}_k \tau_A \wedge \alpha_i = 0
$$
Then, according to Cartan, the differential system \eqref{pfaffian1} is in
involution if and only if
$$
t = s_1 + 2 s_2 + 3 s_3 + ... + k s_k.
$$

If the Cartan test fails, then it is necessary to prolong the differential system
by differentiating the original equations to obtain a new differential system on
$N_1$, the first jet bundle of local sections of $N$ over $M$, then repeating the
foregoing analysis.

Once a sequence of Cartan characters is found that passes the Cartan test, then if
$s_l$ is the last non-zero character, the general solution to the differential
system \eqref{pfaffian1} will depend on $s_l$ arbitrary functions of $l$ variables.

\subsubsection{The differential ideal step}\label{section-IP-EDS}
It follows from Theorem \ref{Helmholtz2} that for a given SODE $\Gamma$ for which the corresponding $\bold \Phi$ is diagonalisable, the closed 2-form $\omega$ that we are seeking must satisfy the algebraic conditions:
%\begin{align*}
%&\omega(X_a^V,X^V_b)=0,\\ %\label{helmholtz3-1}\\
%&\omega(X_a^H,X^H_b)=0, \\%\label{helmholtz3-4}\\
%&\Gamma \hook \omega=0, \\%\label{helmholtz3-2}\\
%&\omega(X_a^V,X^H_b)=\omega(X_b^V,X_a^H). %\label{helmholtz3-3}
%\end{align*}
\begin{align*}
&\omega(X_a^V,X^V_b)=0,\quad &&\omega(X_a^H,X^H_b)=0 \\
&\Gamma \hook \omega=0, \quad &&\omega(X_a^V,X^H_b)=\omega(X_b^V,X_a^H).
\end{align*}
So we start the EDS process with the module $\Sigma^0:=Sp\{\omega^{ab}\},$ where $\omega^{ab}:=\frac{1}{2}(\phi^{aV} \wedge \phi^{bH}+ \phi^{bV} \wedge \phi^{aH}),\ 1\le a\le b \le n$, and look for the (final) differential ideal generated by $\Sigma^f$.\\
Consider a 2-form $\omega$ in $\Sigma^0$, that is $\omega=\sum_{a \leq b}r_{ab}\omega^{ab}$. Calculating the exterior derivative of $\omega$ using \eqref{dphiaV} and \eqref{dphiaH} gives
\begin{align}
\notag d\omega&=\sum_{a\le b} dr_{ab}\wedge \frac{1}{2}(\phi^{aV} \wedge \phi^{bH}+ \phi^{bV}\wedge \phi^{aH})\\
\notag       &\quad - \sum_{a\le b} r_{ab}\bigg[(\tau^{a\Gamma}_c dt+\tau^{aH}_{dc}\phi^{dH}+\tau^{aV}_{dc}\phi^{dV})\wedge \frac{1}{2}(\phi^{bV} \wedge \phi^{cH}+ \phi^{cV}\wedge \phi^{bH})\\
\notag &\quad -(\tau^{b\Gamma}_c dt+\tau^{bH}_{dc}\phi^{dH}+\tau^{bV}_{dc}\phi^{dV})\wedge \frac{1}{2}(\phi^{aV} \wedge \phi^{dH}+ \phi^{dV}\wedge \phi^{aH})\\
\notag &\quad +\frac{1}{2}(\lambda_b-\lambda_a)dt\wedge \phi^{aH}\wedge \phi^{bH}\\
\notag &\quad -\frac{1}{4}\phi^{aV}(R(X_d^H,X_c^H))\phi^{dH}\wedge\phi^{cH}\wedge\phi^{bH}\\
\notag & \quad -\frac{1}{4}\phi^{bV}(R(X_d^H,X_c^H))\phi^{dH}\wedge\phi^{cH}\wedge\phi^{aH}\bigg]\\
\notag \Rightarrow \ d\omega&\equiv \sum_{a\le b} r_{ab}\bigg[\frac{1}{2}(\lambda_b-\lambda_a)dt\wedge \phi^{aH}\wedge \phi^{bH}\\
\label{sigma-0-condition} &\quad -\frac{1}{4}\phi^{aV}(R(X_d^H,X_c^H))\phi^{dH}\wedge\phi^{cH}\wedge\phi^{bH}\\
 \notag     & \quad -\frac{1}{4}\phi^{bV}(R(X_d^H,X_c^H))\phi^{dH}\wedge\phi^{cH}\wedge\phi^{aH}\bigg] \ (\text{mod } \langle \Sigma^0 \rangle).
\end{align}
By looking at \eqref{sigma-0-condition} we find alternative proofs of the following two propositions (see \cite{Al06}).

\begin{propn} \label{identity thm} %(see \cite{Al06})\\
The differential ideal step finishes at $\Sigma^0$ if and only if $\bold{\Phi}$ is a function multiple of the identity.
\end{propn}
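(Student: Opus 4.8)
The plan is to read off everything from the already-reduced expression \eqref{sigma-0-condition} for $d\omega$ modulo $\langle\Sigma^0\rangle$. The differential ideal step finishes at $\Sigma^0$ precisely when $\langle\Sigma^0\rangle$ is itself a differential ideal, i.e. when $d\omega\equiv 0\ (\text{mod } \langle\Sigma^0\rangle)$ for every $\omega=r_{ab}\omega^{ab}\in\Sigma^0$. Since $d(r_{ab}\omega^{ab})=dr_{ab}\wedge\omega^{ab}+r_{ab}\,d\omega^{ab}$ and the first summand already lies in $\langle\Sigma^0\rangle$, this is equivalent to $d\omega^{ab}\equiv 0\ (\text{mod } \langle\Sigma^0\rangle)$ for each generator, and hence to the vanishing modulo $\langle\Sigma^0\rangle$ of the $3$-form $\Theta^{ab}$ occurring as the coefficient of $r_{ab}$ in \eqref{sigma-0-condition}. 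The key structural observation is that each $\Theta^{ab}$ is \emph{purely horizontal}: it is built only from $dt$ and the one-forms $\phi^{\cdot H}$, with no $\phi^{\cdot V}$ factor. On the other hand a degree-three element of $\langle\Sigma^0\rangle$ is a sum of terms $\beta\wedge\omega^{cd}$, and since each monomial of $\omega^{cd}=\tfrac12(\phi^{cV}\wedge\phi^{dH}+\phi^{dV}\wedge\phi^{cH})$ carries exactly one vertical factor, so does each monomial of $\beta\wedge\omega^{cd}$. Thus $\langle\Sigma^0\rangle$ contains no nonzero purely horizontal $3$-form, so ``$\Theta^{ab}\equiv 0\ (\text{mod } \langle\Sigma^0\rangle)$'' is the same as ``$\Theta^{ab}=0$''.

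Separating the term carrying $dt$ from those with three horizontal factors, $\Theta^{ab}=0$ is equivalent to
\[
(\lambda_b-\lambda_a)\,dt\wedge\phi^{aH}\wedge\phi^{bH}=0
\]
together with
\[
\phi^{aV}(R(X_d^H,X_c^H))\,\phi^{dH}\wedge\phi^{cH}\wedge\phi^{bH}+\phi^{bV}(R(X_d^H,X_c^H))\,\phi^{dH}\wedge\phi^{cH}\wedge\phi^{aH}=0.
\]
For the forward implication the first equation alone suffices: taken with $a<b$ it forces $\lambda_a=\lambda_b$, since $dt\wedge\phi^{aH}\wedge\phi^{bH}\neq 0$; hence all eigenvalues of $\bold\Phi$ coincide, and a diagonalisable matrix with a single eigenvalue is a function multiple of the identity.

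For the converse I would assume $\bold\Phi=\lambda\,\mathrm{Id}$. Then the first condition is automatic, and for the second I substitute $\lambda_a\equiv\lambda$ into the curvature identity \eqref{bracket-iden-3}, which collapses to $3\,\phi^{aV}(R(X_d^H,X_c^H))=X_d^V(\lambda)\delta^a_c-X_c^V(\lambda)\delta^a_d$. Feeding this into the two curvature sums and contracting the Kronecker deltas, each sum reduces to $\pm\tfrac23\,X_d^V(\lambda)\,\phi^{dH}\wedge\phi^{aH}\wedge\phi^{bH}$, so they cancel; hence every $\Theta^{ab}$ vanishes, $d\omega\equiv 0\ (\text{mod } \langle\Sigma^0\rangle)$ for all $\omega\in\Sigma^0$, and the step finishes at $\Sigma^0$. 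The one place that needs care is this final contraction: after setting $c=a$ (respectively $d=a$) one must keep track of the sign produced by reordering $\phi^{dH}\wedge\phi^{cH}\wedge\phi^{bH}$, and it is exactly that sign which makes the two curvature contributions equal and opposite. Everything else is the type-separation argument, which is immediate once one notices that $\langle\Sigma^0\rangle$ cannot contain a nonzero purely horizontal form.
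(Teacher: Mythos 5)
Your proof is correct and follows essentially the same route as the paper's: read the two obstruction conditions off \eqref{sigma-0-condition}, use the $dt\wedge\phi^{aH}\wedge\phi^{bH}$ term to force all eigenvalues to coincide, and invoke identity \eqref{bracket-iden-3} for the converse. You additionally make explicit two points the paper leaves implicit --- that $\langle\Sigma^0\rangle$ contains no nonzero purely horizontal $3$-form (so vanishing modulo the ideal is genuine vanishing), and the sign-tracking in the Kronecker-delta contraction that produces the curvature cancellation --- and both check out.
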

\begin{proof} From \eqref{sigma-0-condition}, the necessary and sufficient conditions for $\langle\Sigma^0\rangle$ to be a differential ideal are
\begin{equation}
r_{ab}(\lambda_b-\lambda_a)dt\wedge \phi^{aH}\wedge \phi^{bH}=0, \quad \forall a < b \quad \text{(no sum)}\label{sigma-0-condition-1}
\end{equation}
and
\begin{equation}
\sum_{a\le b} r_{ab}(\phi^{aV}(R(X_d^H,X_c^H))\phi^{dH}\wedge\phi^{cH}\wedge\phi^{bH}
+\phi^{bV}(R(X_d^H,X_c^H))\phi^{dH}\wedge\phi^{cH}\wedge\phi^{aH})=0 \label{sigma-0-condition-2}
\end{equation}
for all $r_{ab}$.
Hence if $\Sigma^0$ generates a differential ideal, then the two conditions are satisfied for all $r_{ab}$. This implies that for all $a,b$, $\lambda_a=\lambda_b$, and thus the matrix $\bold \Phi$ is multiple of the identity. Conversely, if $\bold \Phi$ is multiple of the identity, then \eqref{sigma-0-condition-1} is immediately satisfied and \eqref{sigma-0-condition-2} is satisfied via the identity \eqref{bracket-iden-3}.
\end{proof}
\begin{propn}\label{di-sigma-0}
Suppose that $\bold \Phi$ is diagonalisable with distinct eigenvalues and eigenforms $\phi^a.$ Let $\Sigma^0$ be $Sp\{\omega^{ab}\}$ and $\omega \in \Sigma^0$. Then  $\omega \in \Sigma^1$ if and only if $\omega:=\sum_{a=1}^n r_a \omega^{aa}$ and the curvature satisfies
%Let $\Sigma^1$ be the submodule of $\Sigma^0$ that satisfies the differential ideal condition: $d\omega \in \langle\Sigma^0\rangle \quad \forall\omega\in \Sigma^1$. Let $\omega=r_{ab}\omega^{ab} \in \Sigma^1$, then $\Sigma^1=Sp\{\omega^a:=\omega^{aa}=\phi^{aV}\wedge\phi^{aH}\}$, and $$\sum_{\text{cyclic } kij} r_{kk}\phi^{kV}(R(X_i^H, X_j^H))=0,\quad \text{for all distinct $k,i,j$.}$$
\begin{equation}\label{curvature-condition-Sigma-1}
\sum_{\text{cyclic } dce} r_{d}\phi^{dV}(R(X_c^H, X_e^H))=0,\quad \text{for all distinct $d,c,e$, (no sum on $d$).}
\end{equation}
\end{propn}

\begin{proof}
Let $\omega \in \Sigma^0$, that is $\omega= \sum_{a\leq b}r_{ab}\omega^{ab}$, then $\omega \in \Sigma^1$ if and only if $d\omega \in \langle \Sigma^0 \rangle$. This is equivalent to the two conditions \eqref{sigma-0-condition-1} and \eqref{sigma-0-condition-2} on $r_{ab}$.% which results in restriction on $r_{ab}$.%Calculating $d\omega$ using \eqref{dphiaV} and \eqref{dphiaH} gives
%\begin{align*}
%d\omega&=\sum_{a\le b} dr_{ab}\wedge \frac{1}{2}(\phi^{aV} \wedge \phi^{bH}+ \phi^{bV}\wedge \phi^{aH})\\
%       &\quad - \sum_{a\le b} r_{ab}\bigg[(\tau^{a\Gamma}_c+\tau^{aH}_{dc}\phi^{dH}+\tau^{aV}_{dc}\phi^{dV})\wedge \frac{1}{2}(\phi^{bV} \wedge \phi^{cH}+ \phi^{cV}\wedge \phi^{bH})\\
%       &\quad -(\tau^{b\Gamma}_c+\tau^{bH}_{dc}\phi^{dH}+\tau^{bV}_{dc}\phi^{dV})\wedge \frac{1}{2}(\phi^{aV} \wedge \phi^{dH}+ \phi^{dV}\wedge \phi^{aH})\\
%       &\quad +\frac{1}{2}(\lambda_b-\lambda_a)dt\wedge \phi^{aH}\wedge \phi^{bH}\\
%       &\quad -\frac{1}{4}\phi^{aV}(R(X_d^H,X_c^H))\phi^{dH}\wedge\phi^{cH}\wedge\phi^{bH}\\
%       & \quad -\frac{1}{4}\phi^{bV}(R(X_d^H,X_c^H))\phi^{dH}\wedge\phi^{cH}\wedge\phi^{aH}\bigg] \\
%       &\equiv \sum_{a\le b} r_{ab}\bigg[\frac{1}{2}(\lambda_b-\lambda_a)dt\wedge \phi^{aH}\wedge \phi^{bH}\\
%       &\quad -\frac{1}{4}\phi^{aV}(R(X_d^H,X_c^H))\phi^{dH}\wedge\phi^{cH}\wedge\phi^{bH}\\
%       & \quad -\frac{1}{4}\phi^{bV}(R(X_d^H,X_c^H))\phi^{dH}\wedge\phi^{cH}\wedge\phi^{aH}\bigg] \ (\text{mod } \langle \Sigma^0 \rangle).
%\end{align*}
%So $d\omega \equiv 0 \ (\text{mod } \langle \Sigma^0 \rangle$) gives
%\begin{equation}
%r_{ab}(\lambda_b-\lambda_a)dt\wedge \phi^{aH}\wedge \phi^{bH}=0, \quad a < b \quad \text{(no sum)}\label{sigma-0-condition-1}
%\end{equation}
%and
%\begin{equation}
%\sum_{a\le b} r_{ab}(\phi^{aV}(R(X_d^H,X_c^H))\phi^{dH}\wedge\phi^{cH}\wedge\phi^{bH}
%+\phi^{bV}(R(X_d^H,X_c^H))\phi^{dH}\wedge\phi^{cH}\wedge\phi^{aH})=0 \label{sigma-0-condition-2}
%\end{equation}

Since $\lambda_b-\lambda_a \neq 0$ by assumption, \eqref{sigma-0-condition-1} gives $r_{ab}=0$ for $a < b$. We will now use $r_a$ instead of $r_{aa}$ and $\omega^a$ instead of $\omega^{aa}$.\\
The condition \eqref{sigma-0-condition-2} becomes
\begin{equation*}
\sum_{dce}r_d(\phi^{dV}(R(X_c^H,X_e^H))\phi^{cH}\wedge\phi^{eH}\wedge\phi^{dH}=0,
\end{equation*}
for distinct $d,c$ and $e$, which is equivalent to \eqref{curvature-condition-Sigma-1}.\\
%\begin{align}
%\sum_{\text{cyclic } adc} r_{a}\phi^{aV}(R(X_d^H, X_c^H))=0 \label{sigma-0-condition-3-a}, \text{where $a,d$ and $c$ are distinct}.
%\end{align}
Therefore $\Sigma^1=Sp\{\omega^a:=\phi^{aV}\wedge\phi^{aH}, a=1,\dots,n\}$ satisfying the condition \eqref{curvature-condition-Sigma-1} as required.
\end{proof}

By observation from Proposition \ref{identity thm} we have that for the case where $\bold \Phi$ is diagonalisable with distinct eigenvalues we need to process the next differential ideal step to examine whether or not $\langle\Sigma^1\rangle$ is a differential ideal. However the condition \eqref{curvature-condition-Sigma-1} presents a difficulty in this checking process. Now we denote $\tilde \Sigma^1:=Sp\{\omega^a, a=1,\dots,n\}$, which does not necessarily satisfy \eqref{curvature-condition-Sigma-1}, and so $\Sigma^1\subseteq \tilde\Sigma^1\subset\Sigma^0$. These results show that for the case where $\bold \Phi$ is diagonalisable with distinct eigenvalues, $\tilde \Sigma^1$ is the more effective option with which to start the differential ideal step.
%Now we denote $\tilde \Sigma^1:=Sp\{\omega^a, a=1,\dots,n\}$, which does not necessarily satisfy \eqref{curvature-condition-Sigma-1}, and so $\Sigma^1\subseteq \tilde\Sigma^1\subseteq \Sigma^0$. By observation from Proposition \ref{identity thm} and Proposition \ref{di-sigma-0}, from now on for the case where $\bold \Phi$ is diagonalisable with distinct eigenvalues we start the differential ideal process with the submodule $\tilde \Sigma^1$.
%%%%%%%%%%%%%%%%%%%%%%%%%%%%%%%%%%%%%%%%%%%%%%%%%%%%%%%%%%%%%%%%%
%In the next proposition we assume $\Sigma^1=Sp\{\omega^a:=\phi^{aV}\wedge\phi^{aH}\}$ without the condition \eqref{curvature-condition-Sigma-1} and expect to have it appear at the next differential ideal step because of the nature of the EDS process.
%%%%%%%%%%%%%%%%%%%%%%%%%%%%%%%%%%%%%%%%%%%%%%%%%%%%%%%%%%%%%%%%%%5

\begin{propn}\label{di-sigma-1}
Let $\bold \Phi$ be diagonalisable with distinct eigenvalues. Then the necessary and sufficient conditions for $\omega=\sum_{a} r_a\phi^{aV}\wedge\phi^{aH} \in \tilde \Sigma^1$ to have its exterior derivative in the ideal $\langle\tilde\Sigma^1\rangle$ are that, for all distinct $a,b$ and $c$ (no sum),
%Let $\Sigma^1:=Sp\{\phi^{aV}\wedge \phi^{aH}, a=1,...,n\}$, where $\phi^{aV}$ and $\phi^{aH}$ are the vertical and horizontal lifts of eigenforms of a diagonalisable $\Phi$ with distinct eigenvalues. The necessary and sufficient conditions for $\omega=\sum_{a} r_a\phi^{aV}\wedge\phi^{aH} \in \Sigma^1$ to have its exterior derivative in the ideal $\langle\Sigma^1\rangle$ are that, for all distinct $a,b$ and $c$ (no sum),
\begin{align}
\notag&r_a\tau^{a\Gamma}_b+r_b\tau^{b\Gamma}_a=0, \\
\notag&r_a(\tau^{aV}_{bc}-\tau^{aV}_{cb})-r_b\tau^{bV}_{ca}+r_c\tau^{cV}_{ba}=0,\\
\label{di-sigma-1-condition}&r_a(\tau^{aH}_{bc}-\tau^{aH}_{cb})-r_b\tau^{bH}_{ca}+r_c\tau^{cH}_{ba}=0,\\
\notag&r_a\phi^{aV}(R(X_c^H,X_b^H))+r_b\phi^{bV}(R(X_a^H,X_c^H))+r_c\phi^{cV}(R(X_b^H,X_a^H))=0.
\end{align}
The last of these is just \eqref{curvature-condition-Sigma-1}.
\end{propn}

\begin{proof}
Let $\omega=r_a\phi^{aV}\wedge\phi^{aH} \in \tilde\Sigma^1$. Then
\begin{equation}
d\omega \in \langle \tilde \Sigma^1\rangle \Leftrightarrow \  d\omega=\sum_k\beta_k \wedge \phi^{kV}\wedge\phi^{kH}. \label{sigma1-condition}
\end{equation}
By observation \eqref{sigma1-condition} is equivalent to
\begin{align}
\notag & d\omega(\Gamma, X_a^V, X_b^V)=0, & &d\omega(\Gamma, X_a^H, X_b^H)=0,& & d\omega(\Gamma, X_a^V, X_b^H)=0,\\
\label{sigma1-condition-g1}& d\omega(X_a^V, X_b^H, X_c^H)=0, & &d\omega(X_a^V, X_b^V, X_c^H)=0,& &d\omega(X_a^V, X_b^V, X_c^V)=0,\\
\notag &d\omega(X_a^H, X_b^H, X_c^H)=0,
\end{align}
for all distinct $a,b$ and $c$.\\
Applying the formula \eqref{exter-deri-2-form} to the identities in \eqref{sigma1-condition-g1}, we can see that only the second part that involves the Lie brackets can contribute. Using the bracket relations \eqref{bracket-iden-a}-\eqref{bracket-iden-e} in the calculation we find that the first, the second and the sixth condition in \eqref{sigma1-condition-g1} are identically satisfied. The third condition gives:
$$r_a\tau^{a\Gamma}_b+r_b\tau^{b\Gamma}_a=0, \quad a \neq b.$$
The fourth and the fifth condition respectively give
$$r_a(\tau^{aH}_{bc}-\tau^{aH}_{cb})-r_b\tau^{bH}_{ca}+r_c\tau^{cH}_{ba}=0,$$
$$r_a(\tau^{aV}_{bc}-\tau^{aV}_{cb})-r_b\tau^{bV}_{ca}+r_c\tau^{cV}_{ba}=0,$$
for all distinct $a,b$ and $c$.\\
The remaining condition is, that for all distinct $a,b,c$ and with no sum,
$$r_a\phi^{aV}(R(X_c^H,X_b^H))+r_b\phi^{bV}(R(X_a^H,X_c^H))+r_c\phi^{cV}(R(X_b^H,X_a^H))=0.$$
This last condition is simply the condition \eqref{curvature-condition-Sigma-1} in Proposition \ref{di-sigma-0}.
\end{proof}
%Proposition \ref{di-sigma-1} results in the following two corollaries.
\begin{cor}\label{firststep-condition}
For diagonalisable $\bold \Phi$ with distinct eigenvalues, the necessary and sufficient conditions for $\tilde \Sigma^1$ to generates a differential ideal are that, for all distinct $a,b$ and $c$,
\begin{equation}\label{Sigma-1-diff-ideal}
\tau^{a\Gamma}_b=0,\ \tau^{aV}_{bc}=0
%&\tau^{aH}_{bc}=0\\
%&\phi^{aV}(R(X_b^H,X_c^H))=0
\end{equation}
\end{cor}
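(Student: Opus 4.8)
The plan is to read the corollary off Proposition~\ref{di-sigma-1} and then discard two of its conditions as redundant, using the Jacobi identities derived earlier. First note that $\langle\Sigma^1\rangle$ is a differential ideal exactly when $d\omega\in\langle\Sigma^1\rangle$ for every $\omega=\sum_a r_a\,\phi^{aV}\wedge\phi^{aH}$ and \emph{every} choice of the functions $r_a$: indeed $d(r_a\,\phi^{aV}\wedge\phi^{aH})$ and $r_a\,d(\phi^{aV}\wedge\phi^{aH})$ differ only by $dr_a\wedge\phi^{aV}\wedge\phi^{aH}\in\langle\Sigma^1\rangle$. By Proposition~\ref{di-sigma-1} this holds iff, for all distinct $a,b,c$ and all $r_a$,
\begin{align*}
&r_a\tau^{a\Gamma}_b+r_b\tau^{b\Gamma}_a=0,\qquad r_a(\tau^{aV}_{bc}-\tau^{aV}_{cb})-r_b\tau^{bV}_{ca}+r_c\tau^{cV}_{ba}=0,\\
&r_a(\tau^{aH}_{bc}-\tau^{aH}_{cb})-r_b\tau^{bH}_{ca}+r_c\tau^{cH}_{ba}=0,\\
&r_a\phi^{aV}(R(X_c^H,X_b^H))+r_b\phi^{bV}(R(X_a^H,X_c^H))+r_c\phi^{cV}(R(X_b^H,X_a^H))=0.
\end{align*}

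Since the $r_a$ are arbitrary independent functions, each of these identities holds for all $r_a$ iff the coefficient of every $r_a$ vanishes identically. The first line forces $\tau^{a\Gamma}_b=0$ for $a\neq b$. The coefficient of $r_b$ in the second line, after relabelling, gives $\tau^{aV}_{bc}=0$ for all distinct $a,b,c$ (whereupon the $r_a$-coefficient $\tau^{aV}_{bc}-\tau^{aV}_{cb}$ vanishes automatically). Similarly the third line gives $\tau^{aH}_{bc}=0$ and the fourth gives $\phi^{aV}(R(X_b^H,X_c^H))=0$, both for all distinct $a,b,c$. Hence $\Sigma^1$ being a differential ideal is equivalent to the four families $\tau^{a\Gamma}_b=0$, $\tau^{aV}_{bc}=0$, $\tau^{aH}_{bc}=0$, $\phi^{aV}(R(X_b^H,X_c^H))=0$ (distinct indices).

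It then remains to show the last two families follow from the first two, so that \eqref{Sigma-1-diff-ideal} by itself is necessary and sufficient. Assuming $\tau^{a\Gamma}_b=0$ ($a\neq b$) and $\tau^{aV}_{bc}=0$ (distinct), I would specialise the Jacobi identity \eqref{bracket-iden-1-a} to distinct $a,b,c$: its left side $\Gamma(\tau^{aV}_{bc})$ is then $0$, and a short case-check on the contracted index shows every term on the right side vanishes except $-\tau^{aH}_{bc}$, giving $\tau^{aH}_{bc}=0$. For the curvature I would use the algebraic identity obtained by substituting \eqref{bracket-iden-1-b} into \eqref{bracket-iden-2-b}, namely $3\,\phi^{aV}(R(X_b^H,X_c^H))=X_b^V(\lambda_c)\delta^a_c-X_c^V(\lambda_b)\delta^a_b+\tau^{aV}_{bc}(\lambda_c-\lambda_a)-\tau^{aV}_{cb}(\lambda_b-\lambda_a)$: for distinct $a,b,c$ the Kronecker deltas and both $\tau^{aV}$ terms vanish, so $\phi^{aV}(R(X_b^H,X_c^H))=0$ and, by antisymmetry of $R$, $\phi^{aV}(R(X_c^H,X_b^H))=0$ as well. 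Combining the two directions, the four families collapse to \eqref{Sigma-1-diff-ideal}.

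I expect the one slightly delicate point to be the bookkeeping in that penultimate step: one must check that in \eqref{bracket-iden-1-a} (and in the derivation of the curvature identity) every choice of the contracted index produces either an off-diagonal $\tau^{\cdot\Gamma}$ or a $\tau^{\cdot V}$ with three distinct indices, so that all unwanted terms vanish under \eqref{Sigma-1-diff-ideal}. This is routine once the index ranges are tracked, and it is exactly the mechanism by which the EDS process regenerates at $\Sigma^1$ the $\tau^{aH}$- and curvature-conditions that already featured in Proposition~\ref{di-sigma-1}.
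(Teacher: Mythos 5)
Your proposal is correct and follows essentially the same route as the paper: the differential-ideal condition is read off from Proposition~\ref{di-sigma-1} by demanding the four relations hold for arbitrary $r_a$, which forces all four families of coefficients to vanish, and then the $\tau^{aH}_{bc}$ and curvature conditions are shown to be redundant via the Jacobi identities \eqref{bracket-iden-1-a} and \eqref{bracket-iden-1-b}. The only cosmetic difference is that for the curvature family you invoke the algebraic consequence $3\,\phi^{aV}(R(X_b^H,X_c^H))=X_b^V(\lambda_c)\delta^a_c-X_c^V(\lambda_b)\delta^a_b+\tau^{aV}_{bc}(\lambda_c-\lambda_a)-\tau^{aV}_{cb}(\lambda_b-\lambda_a)$ rather than \eqref{bracket-iden-1-b} directly, which is an equally valid (and slightly cleaner) way to finish, and your index bookkeeping is correct.
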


\begin{proof}
$\tilde \Sigma^1:=Sp\{\phi^{aV}\wedge \phi^{aH}: a=1,...,n\}$ generates a differential ideal if and only if the conditions in Proposition \ref{di-sigma-1} hold for arbitrary $r_a$. This immediately gives $\tau^{a\Gamma}_b=0, \tau^{aV}_{bc}=0$, $\tau^{aH}_{bc}=0$ and $\phi^{aV}(R(X_b^H,X_c^H))=0$ for all distinct $a,b$ and $c$. But the last two conditions are the consequences of the first two conditions by Jacobi identities \eqref{bracket-iden-1-a} and \eqref{bracket-iden-1-b} for distinct $a,b$ and $c$.
\end{proof}
%\begin{cor}
%If $\tilde \Sigma^1$ generates a differential ideal, then $\tilde \Sigma^1=\Sigma^1$.
%\end{cor}
We note here that if we assume $\tau^{a\Gamma}_b=0$ for all $a\neq b$ so that $\hnabla_\Gamma X_a^{V/H}=\tau^{a\Gamma}_aX_a^{V/H}$, then all $\tau^{a\Gamma}_b$ can be put equal to zero by re-scaling the eigenvectors of $\bold \Phi$. Thus from now on we have that if $\tilde \Sigma^1$ is a differential ideal, then $\tau^{a\Gamma}_b=0$ for all $a,b$.

In the next differential ideal steps, we define $\tilde \Sigma^{i+1}:=\{\omega\in \tilde\Sigma^i: d\omega\in \langle\tilde\Sigma^{i}\rangle\}$. Thus $\tilde \Sigma^2$ is the submodule of 2-forms in $\tilde\Sigma^1$ which further satisfy the conditions in \eqref{di-sigma-1-condition} and so $\tilde \Sigma^2 \subseteq \Sigma^1 \subseteq \tilde\Sigma^1$. The relation between the sequences $\tilde \Sigma^1 \supset\tilde \Sigma^2 \supset \dots\supset \tilde \Sigma^p\supset\dots$ and $\Sigma^1\supset\Sigma^2 \supset \dots\supset \Sigma^p\supset\dots$ is as follows.

\begin{lem}\label{relation-Sigma-Sigmatilde}
%Suppose $\bold \Phi$ is diagonalisable with distinct eigenvalues. The differential ideal step produces the sequence of submodules
 $\tilde \Sigma^1 \supseteq \Sigma^1\supseteq\tilde \Sigma^2 \supseteq \Sigma^2 \supseteq \dots\supseteq \tilde \Sigma^p\supseteq\Sigma^p\supseteq\dots.$
\end{lem}
\begin{proof}
We have established this for $p=1$. Suppose that it is true for $p=k$, now we will prove that it is true for $p=k+1$, that is $\tilde \Sigma^{k+1} \supseteq \Sigma^{k+1}\supseteq\tilde \Sigma^{k+2}$.
Let $\omega \in \Sigma^{k+1}$, that is $\omega \in \Sigma^k \subseteq \tilde \Sigma^k$ and $d\omega \in \langle\Sigma^k\rangle \subseteq \langle\tilde\Sigma^k\rangle$ and so $\omega \in \tilde \Sigma^{k+1}$.\\
Now let $\omega \in \tilde\Sigma^{k+2}$, that is $\omega \in \tilde\Sigma^{k+1} \subseteq \Sigma^k$ and $d\omega \in \langle\tilde\Sigma^{k+1}\rangle \subseteq \langle\Sigma^k\rangle$ and so $\omega \in \Sigma^{k+1}$.
\end{proof}

%\begin{thm}
%$$length(\Sigma^1, \Sigma^2,\dots,\Sigma^f)-length(\tilde\Sigma^1, \tilde \Sigma^2,\dots,\tilde \Sigma^{f'})\leq 1.$$ That is, $f-f'\leq 1.$
%\end{thm}
%\begin{proof}
%If $\tilde \Sigma^p$ generates a differential ideal, then $\tilde \Sigma^p=\Sigma^p$.
%\end{proof}
An immediate result from Lemma \ref{relation-Sigma-Sigmatilde} is
\begin{cor}
If $\tilde \Sigma^p$ generates a differential ideal but $\tilde\Sigma^{p-1}$ does not, then either $\Sigma^{p-1}$ or $\Sigma^p$ generates a differential ideal, and moreover either $\tilde \Sigma^p=\Sigma^{p-1}$ or $\tilde \Sigma^p=\Sigma^{p}$ respectively.
\end{cor}
\begin{proof}
As a result of Proposition \ref{di-sigma-1} it is true for $p=1$, that is, if $\langle\tilde\Sigma^1\rangle$ is a differential ideal then $\tilde\Sigma^1=\Sigma^1$. For $p>1$ since $\tilde \Sigma^{p-1}$ does not, by assumption,  generate a differential ideal, there are two cases for $\Sigma^{p-1}$. The first case, if $\Sigma^{p-1}$ generates a differential ideal then $\Sigma^{p-1}=\Sigma^p$ and so $\Sigma^{p-1}=\tilde \Sigma^p=\Sigma^p$ by Lemma \ref{relation-Sigma-Sigmatilde}. The second case, if $\Sigma^{p-1}$ does not generate a differential ideal, then $\langle\Sigma^p\rangle$ is a differential ideal because $\langle\tilde \Sigma^p\rangle$ is a differential ideal and $\tilde \Sigma^p=\Sigma^p=\tilde \Sigma^{p+1}$.
\end{proof}

The following proposition indicates the sufficient condition for the nonexistence of regular solutions. This can be used to exclude the cases where there are no regular solutions.
\begin{propn}\label{degenerate-condition}
Suppose $\bold\Phi$ is diagonalisable with distinct eigenvalues. If there is some $\omega^c$ missing in the final submodule $\Sigma^f$, that is $\omega(X_c^V,X_c^H)=0$ for all $\omega \in \Sigma^f$, then there is no regular solution to the inverse problem.
\end{propn}

\begin{proof}
Let $\omega \in\Sigma^f$. We have $\Gamma \hook \omega=0$.
If $\omega^c$ is missing in $\Sigma^f$, then $X_c^{V} \hook \omega=0$ and $X_c^{H} \hook \omega=0$. It then follows that $\omega$ has kernel of dimension greater than one.
\end{proof}
%%%%%%%%%%%%%%%%%%%%%%%%%%%%%%%%%%%%%%%%%%%%
%%%%%%%%%%%%%%%%%%%%%%%%%%%%%%%%%%%%%%%%%%%%%%%%%%%%%%%%%%%%%%%%%%%%%%%%%%%%%%%%%%%%%%%%%%%%%%%%%%
%\begin{propn}\label{DI-first-step-cond}
%If $\Phi$ is diagonalisable with distinct eigenvalues, and exactly $n-1$ of its eigen-distributions are integrable, then the necessary condition for the existence of a non-degenerate solution of the inverse problem is that $\Sigma^1$ is the differential ideal.
%\end{propn}
%\begin{proof}
%Suppose that $\Sigma^1=Sp\{\phi^{aV}\wedge\phi^{aH}\}$ is not a differential ideal. Here we assume that $Sp\{\phi^{bV},\phi^{bH}\}, b=2,..,n$ are integrable, and $Sp\{\phi^{1V},\phi^{1H}\}$ is not.\\
%Let $\omega \in \Sigma^1$, i.e $\omega=r_a\omega^a$, where $\omega^a:=\phi^{aV}\wedge \phi^{aH}$.
%\begin{align*}
%d\omega&=dr_a\wedge \omega^a+r_ad\omega^a\\
%       &=dr_a\wedge \omega^a+r_1d\omega^1+r_bd\omega^b
%\end{align*}
%Since $dr_a\wedge \omega^a \in \langle \Sigma^1 \rangle$ and $d\omega^b \in \langle \Sigma^1 \rangle$, $b=2,...,n$ we have $$d\omega \equiv r_1d\omega^1 \quad (\text{mod } \Sigma^1).$$
%Since $\Sigma^1$ is not a differential ideal then $\omega^1$ is not in $\Sigma^2$ and so there is no non-degenerate solution of the inverse problem by the proposition \ref{degenerate-condition }.
%\end{proof}

\begin{thm}\label{DI-first-step-cond}
Let $\bold \Phi$ be diagonalisable with distinct eigenvalues. Suppose there are $q$ non-integrable eigen co-distributions. If the sequence $\langle \tilde \Sigma^1 \rangle, ..., \langle \tilde \Sigma^q \rangle$ does not contain a differential ideal then there is no non-degenerate solution.
\end{thm}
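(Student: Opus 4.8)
The plan is to run the argument directly from the hypothesis: the blocks coming from the integrable eigenform-distributions can never be removed by the differential ideal recursion, so not reaching a differential ideal within $q$ steps forces the recursion to collapse onto exactly those blocks, at which point Proposition~\ref{degenerate-condition} applies.

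First I would fix notation: relabel the eigenvalues so that $D^\bot_1,\dots,D^\bot_q$ are the non-integrable eigenform-distributions and $D^\bot_{q+1},\dots,D^\bot_n$ are the integrable ones, and set $P:=Sp\{\omega^{q+1},\dots,\omega^n\}$ with $\omega^a=\phi^{aV}\wedge\phi^{aH}$ (so $q\ge 1$, else $\langle\Sigma^1\rangle$ is already a differential ideal by Corollary~\ref{firststep-condition}). For each integrable index $a$, Proposition~\ref{comp-in-cond} gives $\tau^{a\Gamma}_b=\tau^{aV}_{bc}=\tau^{aH}_{bc}=\phi^{aV}(R(X^H_b,X^H_c))=0$ for all $b,c\neq a$, so \eqref{dphiaV} and \eqref{dphiaH} reduce to $d\phi^{aV},d\phi^{aH}\equiv 0\ (\text{mod }\phi^{aV},\phi^{aH})$, i.e.\ $d\omega^a=\xi_a\wedge\omega^a$ for a $1$-form $\xi_a$ as in \eqref{comp-in-cond-2}. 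Hence $\omega^a$ persists through every stage of the recursion: if $\omega^a\in\Sigma^i$ then $d\omega^a=\xi_a\wedge\omega^a\in\langle\Sigma^i\rangle$, so $\omega^a\in\Sigma^{i+1}$. Therefore $P\subseteq\Sigma^i$ for every $i$, $\operatorname{rank}P=n-q$, and $\langle P\rangle$ is itself a differential ideal.

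The second ingredient is that each non-terminal stage strictly lowers the (generic) rank: if $\Sigma^i$ is not a differential ideal then some $\omega\in\Sigma^i$ has $d\omega\notin\langle\Sigma^i\rangle$, and reducing $d\omega$ for a general element of $\Sigma^i$ modulo $\langle\Sigma^i\rangle$ turns the defining condition of $\Sigma^{i+1}$ into a linear condition on the frame-coefficients whose coefficient functions do not all vanish, so $\operatorname{rank}\Sigma^{i+1}<\operatorname{rank}\Sigma^{i}$. Starting from $\operatorname{rank}\Sigma^1=n$, the hypothesis that none of $\langle\Sigma^1\rangle,\dots,\langle\Sigma^q\rangle$ is a differential ideal gives $q$ successive strict inclusions $\Sigma^1\supsetneq\Sigma^2\supsetneq\cdots\supsetneq\Sigma^{q+1}$, whence $\operatorname{rank}\Sigma^{q+1}\le n-q$. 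Combined with $P\subseteq\Sigma^{q+1}$ and $\operatorname{rank}P=n-q$ this forces $\operatorname{rank}\Sigma^{q+1}=n-q$ and $\Sigma^{q+1}=P$; since $\langle P\rangle$ is a differential ideal the recursion stops here, so $\Sigma^f=P$. In particular every $\omega^a$ with $a\le q$ is missing from $\Sigma^f$, and Proposition~\ref{degenerate-condition} yields the conclusion: there is no non-degenerate (regular) solution.

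I expect the delicate point to be the rank-drop claim — that a non-terminal differential ideal step genuinely removes at least one dimension from the module. Some care is needed on the locus where $\Sigma^i$ fails to be a genuine sub-bundle, but this is exactly the mechanism underlying the recursion of Section~\ref{EDS-inverse}, illustrated concretely by Propositions~\ref{di-sigma-0}--\ref{di-sigma-1} and Corollary~\ref{firststep-condition}, where the passages $\Sigma^0\to\Sigma^1\to\Sigma^2$ are carried out explicitly. Everything else rests on the elementary observation that integrable eigenform-distributions are never eliminated, which is precisely what pins the bound at $q$.
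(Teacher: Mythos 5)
Your proposal is correct and follows essentially the same route as the paper's proof: the integrable $\omega^a$ persist at every stage because $d\omega^a=\xi^a_a\wedge\omega^a$, the module rank drops strictly at each non-terminal step, so after $q$ failed steps only the $n-q$ integrable generators survive and Proposition~\ref{degenerate-condition} gives non-existence. The paper asserts the strict rank-drop with the same (generic) level of justification that you flag as the delicate point, so your write-up is if anything slightly more explicit about why the recursion terminates at $\Sigma^{q+1}=P$.
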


%\begin{thm}\label{DI-first-step-cond}
%If $\bold \Phi$ is diagonalisable with distinct eigenvalues, and exactly $q$ of its eigen-distributions $ Sp\{\phi^{aV}, \phi^{aH}\}, a=1,...,q$ are non-integrable, then the necessary condition for the existence of a non-degenerate solution on the inverse problem is that $\Sigma^q:=Sp\{\omega^1+r_{12}\omega^2+...+r_{1q}\omega^q, \ \omega^b; \ \omega^k=\phi^{kV}\wedge\phi^{kH}, \ b=q+1,...,n\}$ is the differential ideal.
%\end{thm}

\begin{proof}
Suppose that the eigen co-distributions are ordered so that the first $q$ are non-integrable.
 Firstly, if $\langle \tilde \Sigma^q \rangle$ is not a differential ideal, then no earlier $\langle \tilde \Sigma^p \rangle$ can be a differential ideal. Now each of the $n-q$ integrable $\omega^b:=\phi^{bV}\wedge \phi^{bH}$ has remained in $\tilde \Sigma^q$ since $d\omega^b=\xi^b_b\wedge\omega^b$. However $\langle\tilde\Sigma^q\rangle$ is not a differential ideal so that $dim(\tilde \Sigma^q)> n-q$.
Now $dim(\tilde \Sigma^{p+1})<dim(\tilde \Sigma^p)$ for $p<q+1$ and so $dim(\tilde \Sigma^q)\leq n-(q-1)$. Thus $dim(\tilde\Sigma^q) = n-q+1$. But $\langle\tilde\Sigma^q \rangle$ is not a differential ideal by assumption and hence $dim(\tilde \Sigma^{q+1})=n-q$ and so $\omega^1, ...,\omega^q$ are missing and no solution exists.
\end{proof}
We remark that it is not the case that the number of non-integrable co-distributions always matches the terminating differential ideal step. Example \ref{counterxmpl} in section \ref{examples} demonstrates this for $n=3$.
\begin{cor}
Let $\bold \Phi$ be diagonalisable with distinct eigenvalues. If the final submodule is one dimensional and $\tilde\Sigma^f=Sp\{r_a\omega^a  , a=1,\dots,n, r_a\neq 0\}$ , then no eigen co-distributions of $\bold \Phi$ are integrable.
\end{cor}

For the sake of completeness we reproduce the following theorem about the limiting cases.

\begin{thm}\label {PK thm} (see \cite{PK07})
Suppose that the final differential ideal is generated by a one dimensional submodule $\Sigma^f = Sp\{\tilde\omega\},$ for non-degenerate $\tilde\omega.$ That is, there exists $\mu$ such that
$$d\tilde\omega = \mu\wedge\tilde\omega, \quad
\wedge^n\tilde\omega \neq 0.$$
Then
$\langle\tilde\omega\rangle$ contains a closed, non-degenerate two-form if and
only if $d\mu=0$.
\end{thm}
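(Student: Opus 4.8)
The plan is to prove the equivalence $\langle\tilde\omega\rangle$ contains a closed non-degenerate two-form $\iff d\mu=0$ by exploiting the one-dimensionality of $\Sigma^f$ together with the identity \eqref{dxi-aa}-type reasoning, i.e.\ the fact that $d\mu$ lives in the ideal generated by $\tilde\omega$. First I would observe that any two-form in $\langle\tilde\omega\rangle$ has the shape $\omega = r\,\tilde\omega$ for a single function $r$ on $E$ (this is where one-dimensionality is essential), and non-degeneracy of $\omega$ is equivalent to $r$ being nowhere zero, since $\wedge^n\tilde\omega\neq 0$. Then I would compute $d\omega = dr\wedge\tilde\omega + r\,d\tilde\omega = dr\wedge\tilde\omega + r\,\mu\wedge\tilde\omega = (dr + r\mu)\wedge\tilde\omega = \big(\tfrac{dr}{r}+\mu\big)\wedge(r\tilde\omega)$, so that $d\omega=0$ is equivalent to $(d(\ln|r|)+\mu)\wedge\tilde\omega = 0$, i.e.\ to the existence of a nowhere-zero $r$ with $d(\ln|r|)+\mu \equiv 0 \pmod{\tilde\omega}$.

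Next I would pin down the congruence. Since $\tilde\omega$ is a \emph{simple} non-degenerate two-form of rank $2$ on a $(2n+1)$-manifold (being a product $\phi^{aV}\wedge\phi^{aH}$ up to the EDS reductions, or more generally decomposable as the final ideal is one-dimensional and maximal-rank forces $n=1$ here — I would make the genuine rank bookkeeping explicit), the condition ``$\beta\wedge\tilde\omega = 0$'' on a one-form $\beta$ means exactly that $\beta$ lies in the span of the two annihilating one-forms of $\tilde\omega$. So the closure condition reduces to: there is a nowhere-zero function $r$ with $\mu = -d(\ln|r|) \pmod{\langle\text{ann}(\tilde\omega)\rangle}$. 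For the ``only if'' direction, if such $r$ exists then $d\mu = -d\,d(\ln|r|) = 0$ automatically — wait, that only gives $d\mu=0$ modulo terms from differentiating the annihilator one-forms, so I would instead argue directly: from $d\omega=0$ and $\omega=r\tilde\omega$ non-degenerate we get $d\tilde\omega = -d(\ln|r|)\wedge\tilde\omega$, hence comparing with $d\tilde\omega=\mu\wedge\tilde\omega$ gives $(\mu+d\ln|r|)\wedge\tilde\omega=0$; differentiating $d\tilde\omega=\mu\wedge\tilde\omega$ yields $0 = d\mu\wedge\tilde\omega - \mu\wedge d\tilde\omega = d\mu\wedge\tilde\omega - \mu\wedge\mu\wedge\tilde\omega = d\mu\wedge\tilde\omega$, so $d\mu\wedge\tilde\omega=0$ always; then using $\mu = -d\ln|r| \pmod{\text{ann}}$ and that $d\mu$ is already known (from \eqref{dxi-aa}) to be $\equiv 0$ modulo the annihilator, I can upgrade $d\mu\wedge\tilde\omega=0$ to $d\mu=0$ on the nose.

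For the ``if'' direction, assume $d\mu=0$. Locally Poincar\'e gives $\mu = df$ for some function $f$, and then setting $r := e^{-f}$ we get $dr + r\mu = -e^{-f}df + e^{-f}df = 0$, so $\omega := e^{-f}\tilde\omega$ satisfies $d\omega = (dr+r\mu)\wedge\tilde\omega = 0$; moreover $\omega$ is non-degenerate because $e^{-f}\neq 0$ and $\wedge^n\tilde\omega\neq 0$. One should note this produces a \emph{local} solution; I would remark that this is the standard reading of the statement (existence of closed non-degenerate two-forms, hence local Lagrangians), matching Theorem~\ref{Helmholtz2}. The main obstacle I anticipate is the ``only if'' direction: getting from the generic identity $d\mu\wedge\tilde\omega=0$ (which holds whether or not a closed representative exists) to the sharp conclusion $d\mu=0$ requires genuinely using both that $\tilde\omega$ has exactly rank $2$ and that $d\mu$ is constrained to the two-dimensional annihilator codistribution of $\tilde\omega$ — i.e.\ a one-form which wedges to zero against a rank-two simple form and simultaneously lies in that form's annihilator must vanish. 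That rank/annihilator bookkeeping, rather than any hard computation, is the crux, and I would isolate it as a short linear-algebra lemma about simple two-forms before assembling the proof.
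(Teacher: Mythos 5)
Your skeleton is right --- reduce to $\omega=r\tilde\omega$, compute $d\omega=(dr+r\mu)\wedge\tilde\omega$, and use the Poincar\'e lemma for the ``if'' direction --- but the ``only if'' direction as you set it up contains a genuine error in the rank bookkeeping, and the lemma you propose to isolate is false. The hypothesis $\wedge^n\tilde\omega\neq 0$ means $\tilde\omega$ has \emph{maximal} rank $2n$ on the $(2n+1)$-dimensional manifold $E$; it is not a simple (decomposable, rank-two) form for $n\geq 2$, and one-dimensionality of the submodule $\Sigma^f$ says nothing about decomposability of its generator. Consequently there is no two-dimensional ``annihilator codistribution'': for a maximal-rank two-form on an odd-dimensional manifold the map $\beta\mapsto\beta\wedge\tilde\omega$ is \emph{injective} on one-forms when $n\geq 2$ (check it on a coframe in which $\tilde\omega=e^1\wedge e^2+\dots+e^{2n-1}\wedge e^{2n}$). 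This observation closes the argument immediately: $(dr+r\mu)\wedge\tilde\omega=0$ forces $dr+r\mu=0$, i.e.\ $\mu=-d(\ln|r|)$, hence $d\mu=0$; conversely $d\mu=0$ gives locally $\mu=df$ and $r=e^{-f}$ works. (Injectivity on one-forms is also what makes $\mu$, and hence the statement ``$d\mu=0$'', well defined; for $n=1$ it is not, so the theorem is really about $n\geq 2$.)

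Two specific steps in your write-up would fail if carried out. First, the closing ``linear-algebra lemma'' --- that a one-form which wedges to zero against a rank-two simple form and lies in that form's annihilator must vanish --- is false: $e^1\wedge(e^1\wedge e^2)=0$ with $e^1\neq 0$. Second, the appeal to \eqref{dxi-aa} is a misapplication: that congruence is the consequence of $d^2\omega^a=0$ for the \emph{simple} characterising forms $\omega^a=\phi^{aV}\wedge\phi^{aH}$ of the individual two-dimensional eigenform-distributions, and it gives no information about $\mu$ for the maximal-rank generator $\tilde\omega$ of $\Sigma^f$. The generic identity $d\mu\wedge\tilde\omega=0$ (from $d^2\tilde\omega=0$) that you derive correctly is indeed not enough on its own --- wedging with $\tilde\omega$ need not be injective on two-forms for small $n$ --- which is exactly why the proof should go through the exact identity $\mu=-d(\ln|r|)$ rather than through $d\mu\wedge\tilde\omega=0$. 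Note finally that the paper itself does not prove this theorem but reproduces it from Prince and King \cite{PK07}, so there is no in-paper argument to compare against; the corrected route above is the standard one.
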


We characterise non-integrable eigen co-distributions in the next theorem.

\begin{thm}\label{integrable-direction-cond-diff}
Let $\Phi$ be diagonalisable with distinct eigenvalues and suppose $\tilde \Sigma^1=Sp\{\phi^{bV}\wedge \phi^{bH}, b=1,...,n\}$ generates a differential ideal. Suppose further that $Sp\{\phi^{aV},\phi^{aH}\}$ is a non-integrable eigen co-distribution of $\bold \Phi$ for some $a$. Then \\
1.\ there exists at least one non-zero $\tau^{aV}_{bb}$ for some $b \neq a$;\\
2.\ let $\tau^{aV}_{bb}, \tau^{aH}_{bb} \neq 0$ for some $b \neq a$ then $\bar \alpha_a=\phi^{aV}+\bar B_a\phi^{aH}$ (no sum) is an integrable direction in $Sp\{\phi^{aV},\phi^{aH}\}$ if and only if
\begin{align}
\label{integrable-direction-cond-diff-1}&\bar B_a=\frac{\tau^{aH}_{bb}}{\tau^{aV}_{bb}} \quad \text{for all such b},\\
\label{integrable-direction-cond-diff-2}\text{and }&X_b^V(\bar B_a)=\bar B_a\tau^{aV}_{ab}-\tau^{aH}_{ab}, \quad a \neq b.
\end{align}
3.\ Let $\tau^{aH}_{bb}=0$ for all $b \neq a$ then $\phi^{aV}$ is an integrable direction if and only if $\tau^{aH}_{ab}=0$ for all $b \neq a$.
\end{thm}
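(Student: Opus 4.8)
The plan is to exploit Proposition \ref{ID-cond}, which already lists the seven necessary and sufficient conditions for $\alpha_a=\phi^{aV}+B_a\phi^{aH}$ to be an integrable direction, and then to simplify that list using the standing hypothesis that $\Sigma^1$ generates a differential ideal. By Corollary \ref{firststep-condition} (together with the re-scaling remark that follows it) this hypothesis gives $\tau^{a\Gamma}_b=0$ for \emph{all} $a,b$, and $\tau^{aV}_{bc}=0$ for all \emph{distinct} $a,b,c$; by the Jacobi identities \eqref{bracket-iden-1-a}--\eqref{bracket-iden-1-b} it also forces $\tau^{aH}_{bc}=0$ and $\phi^{aV}(R(X_b^H,X_c^H))=0$ for all distinct $a,b,c$. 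So the conditions \eqref{integrable-direction-cond-1}, \eqref{integrable-direction-cond-3}, \eqref{integrable-direction-cond-4} collapse to triviality, and conditions \eqref{integrable-direction-cond-2}, \eqref{integrable-direction-cond-6}, \eqref{integrable-direction-cond-7} only retain terms with repeated indices.

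For part 1, I would argue by contradiction: suppose all $\tau^{aV}_{bb}=0$ for $b\neq a$. Combined with $\tau^{aV}_{bc}=0$ for distinct indices, and keeping in mind the $\tau^{aV}_{ab}$, $\tau^{aV}_{aa}$ terms, I would compare against Proposition \ref{comp-in-cond}: I want to show that all four integrability conditions $\tau^{a\Gamma}_b=0$, $\tau^{aV}_{bc}=0$, $\tau^{aH}_{bc}=0$, $\phi^{aV}(R(X_b^H,X_c^H))=0$ (for $b,c\neq a$) hold, contradicting non-integrability. The first, third and fourth are already known from the differential-ideal hypothesis. For $\tau^{aV}_{bc}$ with $b,c\neq a$: if $b\neq c$ it vanishes by Corollary \ref{firststep-condition}; if $b=c$ it is $\tau^{aV}_{bb}$, which vanishes by the assumption being contradicted. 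Hence $Sp\{\phi^{aV},\phi^{aH}\}$ would be integrable — contradiction. So some $\tau^{aV}_{bb}\neq 0$.

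For part 2, under $\tau^{aV}_{bb},\tau^{aH}_{bb}\neq 0$, condition \eqref{integrable-direction-cond-2} for the index pair $(b,b)$ reads $\tau^{aH}_{bb}-B_a\tau^{aV}_{bb}=0$, i.e. $B_a=\tau^{aH}_{bb}/\tau^{aV}_{bb}$; this must hold simultaneously for every such $b$, giving \eqref{integrable-direction-cond-diff-1}. Condition \eqref{integrable-direction-cond-6}, after discarding the distinct-index $\tau$'s that now vanish, becomes exactly \eqref{integrable-direction-cond-diff-2}: $X_b^V(B_a)=B_a\tau^{aV}_{ab}-\tau^{aH}_{ab}$. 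I then need to check that the remaining conditions \eqref{integrable-direction-cond-4}, \eqref{integrable-direction-cond-5}, \eqref{integrable-direction-cond-7} are either automatic or do not add new constraints on $B_a$: \eqref{integrable-direction-cond-4} is already trivial; \eqref{integrable-direction-cond-7} reduces (using $\phi^{aV}(R(X_b^H,X_a^H))$ — here the indices are \emph{not} all distinct, so this curvature term need not vanish) to a relation that I expect follows from differentiating \eqref{integrable-direction-cond-diff-1} along $X_b^H$ and invoking the appropriate Jacobi identity from \eqref{bracket-iden-3}--\eqref{bracket-iden-4}; and \eqref{integrable-direction-cond-5}, the evolution equation $\Gamma(B_a)=B_a^2+\lambda_a$, should be a consequence of $\Gamma$-differentiating $B_a=\tau^{aH}_{bb}/\tau^{aV}_{bb}$ using the bracket identities \eqref{bracket-iden-1-a}--\eqref{bracket-iden-1-b} specialised to repeated indices together with identity \eqref{bracket-iden-3}. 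This compatibility check is the main obstacle — it is where the Jacobi identities must do real work, and one must be careful that the curvature terms with non-distinct indices are handled correctly rather than dropped.

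For part 3, when all $\tau^{aH}_{bb}=0$ ($b\neq a$), I take $B_a=0$, so $\alpha_a=\phi^{aV}$. Then \eqref{integrable-direction-cond-diff-1} is automatically consistent, and the only surviving constraint is \eqref{integrable-direction-cond-6} with $B_a=0$, namely $0=X_b^V(B_a)=-\tau^{aH}_{ab}$, i.e. $\tau^{aH}_{ab}=0$ for all $b\neq a$; conditions \eqref{integrable-direction-cond-5} and \eqref{integrable-direction-cond-7} with $B_a=0$ reduce to $\lambda_a=0$ and $\phi^{aV}(R(X_b^H,X_a^H))=0$ respectively, which I would argue are forced by the Jacobi identities (in particular \eqref{bracket-iden-3}) once $\tau^{aV}_{bb}$-type and $\tau^{aH}_{bb}$-type terms are controlled — or else these become part of the stated characterisation. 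Conversely, if $\tau^{aH}_{ab}=0$ for all $b\neq a$, then every condition of Proposition \ref{ID-cond} with $B_a\equiv 0$ is satisfied and $\phi^{aV}$ is integrable. Throughout, the bookkeeping of which indices coincide and which are genuinely distinct is the delicate point; the conceptual content is simply that the differential-ideal hypothesis kills all ``off-diagonal'' torsion, leaving only the ``diagonal in $b$'' pieces $\tau^{aV}_{bb}$, $\tau^{aH}_{bb}$ to govern the existence of an integrable direction.
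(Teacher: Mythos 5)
Your strategy is the paper's own: reduce everything to Proposition \ref{ID-cond}, use Corollary \ref{firststep-condition} (plus the Jacobi identities \eqref{bracket-iden-1-a}--\eqref{bracket-iden-1-b}) to kill all the $\tau$'s and curvature components with three distinct indices, read off $B_a=\tau^{aH}_{bb}/\tau^{aV}_{bb}$ from \eqref{integrable-direction-cond-2} with $c=b$, and identify \eqref{integrable-direction-cond-diff-2} with \eqref{integrable-direction-cond-6}. However, the converse of part 2 --- showing that \eqref{integrable-direction-cond-5} and \eqref{integrable-direction-cond-7} are automatic consequences of \eqref{integrable-direction-cond-diff-1} and \eqref{integrable-direction-cond-diff-2} --- is exactly where the theorem has content, and you leave it as ``should be a consequence'' / ``I expect follows''. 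The paper closes this by first extracting from \eqref{bracket-iden-1-a} and \eqref{bracket-iden-1-b} (under the differential-ideal conditions) the two scalar identities $\tau^{aH}_{bc}=-\Gamma(\tau^{aV}_{bc})$ for $b,c\neq a$ and $\Gamma(\tau^{aH}_{bb})=\lambda_a\tau^{aV}_{bb}$; differentiating the quotient then gives $\Gamma(B_a)=B_a^2+\lambda_a$, i.e.\ \eqref{integrable-direction-cond-5}. For \eqref{integrable-direction-cond-7} the paper does \emph{not} differentiate the ratio along $X_b^H$ as you propose (which would drag in the heavier identities \eqref{bracket-iden-4-a}--\eqref{bracket-iden-4-b}); it writes $X_b^H(B_a)=-[\Gamma,X_b^V](B_a)=X_b^V(\Gamma(B_a))-\Gamma(X_b^V(B_a))$ and substitutes the already-established \eqref{integrable-direction-cond-5} and \eqref{integrable-direction-cond-6} together with $\Gamma(\tau^{aH}_{ab})=\phi^{aV}(R(X_b^H,X_a^H))+\lambda_a\tau^{aV}_{ab}-X_b^V(\lambda_a)$. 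Until one of these computations is actually carried out, your part 2 is a plan, not a proof.

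Two smaller points. In part 1 you assert that $\tau^{aH}_{bc}=0$ for all $b,c\neq a$ is ``already known from the differential-ideal hypothesis''; that is only true for $b\neq c$. For $b=c$ you must use $\tau^{aH}_{bb}=-\Gamma(\tau^{aV}_{bb})$, which vanishes under your contradiction hypothesis $\tau^{aV}_{bb}=0$ --- without this the appeal to Proposition \ref{comp-in-cond} is incomplete (the curvature term with $b=c$ is fine by skew-symmetry). In part 3, your observation that $B_a=0$ forces $\lambda_a=0$ via \eqref{integrable-direction-cond-5} is a genuine catch (the paper's own proof, which argues directly from \eqref{dphiaV}, is silent on the $-\lambda_a\,dt\wedge\phi^{aH}$ term); and indeed $\lambda_a=0$ does follow from the hypotheses, since $\tau^{aH}_{bb}\equiv 0$ gives $0=\Gamma(\tau^{aH}_{bb})=\lambda_a\tau^{aV}_{bb}$ while part 1 supplies some $\tau^{aV}_{bb}\neq 0$. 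But again you only conjecture this rather than deriving it, and the identity you cite for it is not the relevant one.
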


\begin{proof}
\begin{itemize}
\item[1.]\ Since the co-distribution $Sp\{\phi^{aV},\phi^{aH}\}$ is non-integrable and $\tilde \Sigma^1$ is a differential ideal, there is at least one $\tau^{aV}_{bb}\neq 0$ by Proposition \ref{comp-in-cond}.\\

\item[2.]\ Suppose that $\bar \alpha_a=\phi^{aV}+\bar B_a\phi^{aH}$ is an integrable direction in $Sp\{\phi^{aV},\phi^{aH}\}$, that is \eqref{integrable-direction-cond-1}-\eqref{integrable-direction-cond-7} hold for $B_a= \bar B_a$. Then from \eqref{integrable-direction-cond-2} with $c=b$ we get $$\bar B_a=\frac{\tau^{aH}_{bb}}{\tau^{aV}_{bb}},$$ and \eqref{integrable-direction-cond-diff-2} is exactly \eqref{integrable-direction-cond-6}.\\
Conversely, we show that if $\bar B_a=\frac{\tau^{aH}_{bb}}{\tau^{aV}_{bb}}$, and $X_b^V(\bar B_a)=\bar B_a\tau^{aV}_{ab}-\tau^{aH}_{ab}$, then \eqref{integrable-direction-cond-1}-\eqref{integrable-direction-cond-7} hold for $B_a=\bar B_a$ as follows.\\
The conditions \eqref{integrable-direction-cond-1},\eqref{integrable-direction-cond-2},\eqref{integrable-direction-cond-3} and \eqref{integrable-direction-cond-4} hold for $B_a=\bar B_a$ from Corollary \ref{firststep-condition}.\\
To establish \eqref{integrable-direction-cond-5} we note that
$$\tau^{aH}_{bc}=-\Gamma(\tau^{aV}_{bc}) \quad \text{for all $b,c \neq a$}$$
from Jacobi identity \eqref{bracket-iden-1-a} and $\tau^{a\Gamma}_b=0$ for all $a,b$, and
$$\Gamma(\tau^{aH}_{bb})=\lambda_a\tau^{aV}_{bb}$$
from Jacobi identity \eqref{bracket-iden-1-b} together with the conditions in Corollary \ref{firststep-condition}.\\
We have
\begin{align*}
\Gamma(\bar B_a)&=\Gamma(\frac{\tau^{aH}_{bb}}{\tau^{aV}_{bb}}) \quad \text{for } \tau^{aV}_{bb}\neq 0\\
&=\frac{\Gamma(\tau^{aH}_{bb})- \bar B_a\Gamma(\tau^{aV}_{bb})}{\tau^{aV}_{bb}}\\
&=\lambda_a+(\bar B_a)^2,
\end{align*}
so the condition \eqref{integrable-direction-cond-5} holds.\\
The condition \eqref{integrable-direction-cond-diff-2} is exactly \eqref{integrable-direction-cond-6} and implies the condition \eqref{integrable-direction-cond-7} for $B_a=\bar B_a$, $X_b^H(\bar B_a)=\phi^{aV}(R(X_b^H,X_a^H))+\bar B_a(\bar B_a\tau^{aV}_{ab}-\tau^{aH}_{ab})$ as follows.
\begin{align*}
X_b^H(\bar B_a)&=-[\Gamma,X_b^V](\bar B_a)=X_b^V(\Gamma(\bar B_a))-\Gamma(X_b^V(\bar B_a))\\
          &=X_b^V((\bar B_a)^2+\lambda_a)-\Gamma(\bar B_a\tau^{aV}_{ab}-\tau^{aH}_{ab})\\
          &=2\bar B_a(\bar B_a\tau^{aV}_{ab}-\tau^{aH}_{ab})+X_b^V(\lambda_a)-((\bar B_a)^2+\lambda_a)\tau^{aV}_{ab}+\bar B_a\tau^{aH}_{ab}+\Gamma(\tau^{aH}_{ab})
\end{align*}
By identity \eqref{bracket-iden-1-b} and Corollary \ref{firststep-condition},
$$\Gamma(\tau^{aH}_{ab})=\phi^{aV}(R(X_b^H,X_a^H))+\lambda_a\tau^{aV}_{ab}-X_b^V(\lambda_a)$$
Substituting this into $X_b^H(\bar B_a)$ above and then simplifying we get

$$X_b^H(\bar B_a)=\phi^{aV}(R(X_b^H,X_a^H))+\bar B_a(\bar B_a\tau^{aV}_{ab}-\tau^{aH}_{ab})$$

as required.\\

\item[3.]\ If $\phi^{aV}$ is integrable then $d\phi^{aV}=\mu^a\wedge\phi^{aV}$. By looking at \eqref{dphiaV} along with the assumption of the differential ideal, $\langle\tilde\Sigma^1\rangle,$ we have $\tau^{aH}_{ab}=0$ for $b \neq a$.\\
Conversely, using identity \eqref{bracket-iden-1-b} and Corollary \ref{firststep-condition} to prove that $\phi^{aV}(R(X_b^H,X_a^H))=0$ and so $\phi^{aV}(R(X_a^H,X_b^H))=0$ because $R$ is skew, and all other terms apart from the form $\mu \wedge \phi^{aV}$ in the right hand side of \eqref{dphiaV} go because of Corollary \ref{firststep-condition} and the assumptions of the theorem. Hence $\tau^{aH}_{ab}=0$ together with the stated assumptions and \eqref{dphiaV} imply that $\phi^{aV}$ is an integrable direction.
\end{itemize}
\end{proof}

We now examine the consequences of more than one non-zero $\tau^{aV}_{bb}.$

\begin{cor}\label{integrable-direction-cond-diff-cor}
Let $\Phi$ be diagonalisable with distinct eigenvalues and suppose $\tilde \Sigma^1=Sp\{\phi^{bV}\wedge \phi^{bH}, b=1,...,n\}$ is a differential ideal. Suppose that for some $a$, $Sp\{\phi^{aV},\phi^{aH}\}$ is a non-integrable co-distribution of $\bold \Phi$. Suppose further that there exist at least two $\tau^{aV}_{b_ib_i}\neq 0$ for $b_i\neq a$. Then $\bar\alpha_a=\phi^{aV}+\bar B_a\phi^{aH}$ is an integrable direction in $Sp\{\phi^{aV},\phi^{aH}\}$ if and only if $$\bar B_a=\frac{\tau^{aH}_{b_ib_i}}{\tau^{aV}_{b_ib_i}} \quad \text{for each such $b_i \neq a$ }.$$
\end{cor}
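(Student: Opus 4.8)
The plan is to deduce the Corollary from Theorem~\ref{integrable-direction-cond-diff}; the only substantive point is that the extra condition \eqref{integrable-direction-cond-diff-2} (equivalently \eqref{integrable-direction-cond-6} of Proposition~\ref{ID-cond}) becomes automatic once two of the diagonal torsion components $\tau^{aV}_{b_ib_i}$, $b_i\neq a$, are nonzero. The ``only if'' direction is immediate: if $\alpha_a=\phi^{aV}+B_a\phi^{aH}$ is an integrable direction then, by Proposition~\ref{ID-cond}, condition \eqref{integrable-direction-cond-2} holds, and setting $c=b=b_i$ there (no sum) gives $\tau^{aH}_{b_ib_i}-B_a\tau^{aV}_{b_ib_i}=0$, hence $B_a=\tau^{aH}_{b_ib_i}/\tau^{aV}_{b_ib_i}$ for every such $b_i$; in particular all these ratios coincide.

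For the converse, assume $B_a$ equals the common value $\tau^{aH}_{b_ib_i}/\tau^{aV}_{b_ib_i}$. Just as in the proof of Theorem~\ref{integrable-direction-cond-diff}, conditions \eqref{integrable-direction-cond-1}, \eqref{integrable-direction-cond-3}, \eqref{integrable-direction-cond-4} hold because $\Sigma^1$ is a differential ideal (Corollary~\ref{firststep-condition}); condition \eqref{integrable-direction-cond-2} holds since for distinct $b\neq c$ both terms vanish by Corollary~\ref{firststep-condition}, for $b=c=b_i$ it is the defining relation for $B_a$, and for $b=c$ with $\tau^{aV}_{bb}=0$ one also has $\tau^{aH}_{bb}=-\Gamma(\tau^{aV}_{bb})=0$ via \eqref{bracket-iden-1-a} with the normalisation $\tau^{a\Gamma}_b=0$; condition \eqref{integrable-direction-cond-5} follows from $\tau^{aH}_{bb}=-\Gamma(\tau^{aV}_{bb})$ and $\Gamma(\tau^{aH}_{bb})=\lambda_a\tau^{aV}_{bb}$ (from \eqref{bracket-iden-1-a}-\eqref{bracket-iden-1-b}) by the quotient-rule computation used there; and \eqref{integrable-direction-cond-7} follows from \eqref{integrable-direction-cond-6}. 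So everything reduces to proving \eqref{integrable-direction-cond-6}: $X_b^V(B_a)=B_a\tau^{aV}_{ab}-\tau^{aH}_{ab}$ for each $b\neq a$.

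This is where ``at least two'' is used. Fix $b\neq a$ and, among the $b_i$, pick an index $b_j\neq b$. Since $\tau^{aH}_{cd}=-\Gamma(\tau^{aV}_{cd})$ for all indices (from \eqref{bracket-iden-1-a} with $\tau^{a\Gamma}_b=0$), we have $B_a=\tau^{aH}_{b_jb_j}/\tau^{aV}_{b_jb_j}$, so the quotient rule gives $X_b^V(B_a)=(X_b^V(\tau^{aH}_{b_jb_j})-B_aX_b^V(\tau^{aV}_{b_jb_j}))/\tau^{aV}_{b_jb_j}$. I would then expand $X_b^V(\tau^{aV}_{b_jb_j})$ using the Jacobi identity \eqref{bracket-iden-3-a} and $X_b^V(\tau^{aH}_{b_jb_j})$ using \eqref{bracket-iden-3-b}, both applied to the distinct pair $(b,b_j)$: every $\tau$ carrying three distinct indices is killed by Corollary~\ref{firststep-condition}, leaving only terms built from $\tau^{aV}_{ab}$, $\tau^{aH}_{ab}$, $\tau^{aV}_{b_jb_j}$ and components with upper index $b_j$. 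Substituting these back, and using once more $\tau^{aH}_{ab}=-\Gamma(\tau^{aV}_{ab})$ together with $\Gamma(B_a)=B_a^2+\lambda_a$ from \eqref{integrable-direction-cond-5}, the $b_j$-dependent terms cancel and the expression collapses to $B_a\tau^{aV}_{ab}-\tau^{aH}_{ab}$. By Proposition~\ref{ID-cond}, $\alpha_a$ is then an integrable direction.

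The main obstacle is exactly this last computation: keeping track of which terms of \eqref{bracket-iden-3-a}-\eqref{bracket-iden-3-b} survive the differential-ideal vanishing and verifying that the surviving combination really is $B_a\tau^{aV}_{ab}-\tau^{aH}_{ab}$ (in particular that it is independent of the auxiliary choice of $b_j\neq b$, which a posteriori is clear since the target expression does not involve $b_j$). A minor side case is when some $\tau^{aH}_{b_ib_i}=0$, so that $B_a=0$ and $\alpha_a=\phi^{aV}$: the argument is unchanged and is consistent with part 3 of Theorem~\ref{integrable-direction-cond-diff}.
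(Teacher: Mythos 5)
Your proposal is correct and follows essentially the same route as the paper: the ``only if'' direction is read off from condition \eqref{integrable-direction-cond-2}, and the converse reduces to showing that \eqref{integrable-direction-cond-diff-2} is automatic, which the paper also does by applying the quotient rule to $B_a=\tau^{aH}_{b_jb_j}/\tau^{aV}_{b_jb_j}$ for an auxiliary $b_j\neq b$ (this is exactly where ``at least two'' enters) and simplifying with the Jacobi identities \eqref{bracket-iden-3-a} and \eqref{bracket-iden-3-b} under the differential-ideal conditions of Corollary \ref{firststep-condition}. The only difference is presentational: you re-verify the remaining conditions of Proposition \ref{ID-cond} explicitly, whereas the paper simply invokes Theorem \ref{integrable-direction-cond-diff}(2), and like the paper you leave the final cancellation as a stated computation.
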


\begin{proof}

If $\bar \alpha_a=\phi^{aV}+\bar B_a\phi^{aH}$ is an integrable direction, then $$\bar B_a=\frac{\tau^{aH}_{b_ib_i}}{\tau^{aV}_{b_ib_i}} \quad \text{for all such $b_i \neq a$ }$$ by theorem \ref{integrable-direction-cond-diff} (along with \eqref{integrable-direction-cond-diff-2}). Conversely, we will show that, with the given $\bar B_a,$ $X_{b_i}^V(\bar B_a)=\bar B_a\tau^{aV}_{ab_i}-\tau^{aH}_{ab_i}, \quad b_i \neq a$.
We note that with the conditions in the Corollary \ref{firststep-condition} we get
\begin{equation}\label{bracket-iden-3-aa}
X_{b_i}^V(\tau^{aV}_{b_jb_j})=\tau^{aV}_{b_jb_j}(2\tau^{b_jV}_{b_ib_j}-\tau^{b_jV}_{b_jb_i})-\tau^{aV}_{b_jb_j}\tau^{aV}_{b_ia} \quad \text{for distinct $a, b_i,b_j$}
\end{equation}
from the Jacobi identity \eqref{bracket-iden-3-a}, and
\begin{align}\label{bracket-iden-3-bb}
X_{b_i}^V(\tau^{aH}_{b_jb_j})=\tau^{aH}_{b_jb_j}(2\tau^{b_jV}_{b_ib_j}-\tau^{b_jV}_{b_jb_i})-\tau^{aV}_{b_jb_j}\tau^{aH}_{b_ia} - \tau^{aH}_{b_jb_j}(\tau^{aV}_{b_ia}-\tau^{aV}_{ab_i})\quad \text{for distinct $a, b_i,b_j$}
\end{align}
from the Jacobi identity \eqref{bracket-iden-3-b}.
Now we have
\begin{align*}
X_{b_i}^V(\bar B_a)&=X_{b_i}^V\left(\frac{\tau^{aH}_{b_jb_j}}{\tau^{aV}_{b_jb_j}}\right)       =\frac{X_{b_i}^V(\tau^{aH}_{b_jb_j})-\bar B_aX_{b_i}^V(\tau^{aV}_{b_jb_j})}{\tau^{aV}_{b_jb_j}}.
\end{align*}
Substituting $X_{b_i}^V(\tau^{aV}_{b_jb_j})$ from \eqref{bracket-iden-3-aa} and $X_{b_i}^V(\tau^{aH}_{b_jb_j})$ from \eqref{bracket-iden-3-bb} into $X_{b_i}^V(\bar B_a)$ above and then simplifying we get $$X_{b_i}^V(\bar B_a)=\bar B_a\tau^{aV}_{ab_i}-\tau^{aH}_{ab_i}, \quad b_i \neq a$$ as required.

\end{proof}

\section{Douglas's case IIa2 and an extension}\label{solve-case-2a2-n}
In this section we shall explicitly deal with the inverse problem in arbitrary dimension $n$ given by a diagonalisable $\bold \Phi$ with distinct eigenvalues with exactly $n-1$ co-distributions being integrable. Our result here is given in Theorem \ref{case-2a2-n-results} at the end of the section. We note that this case may be considered as the extension of case IIa2 ($[\nabla\Phi, \Phi]=0$) or case III ($[\nabla\Phi, \Phi]\neq 0$) in the sense of Douglas' classification. In $n=2$, $[\nabla\Phi, \Phi]=0$ is equivalent to $\tilde \Sigma^1$ being a differential ideal. In higher dimensions the problem is rather complicated. In the $n=3$ case for instance, $[\nabla\Phi, \Phi]=0$ does not mean $\tilde \Sigma^1$ is a differential ideal anymore; and the cases of one non-integrable co-distribution and two non-integrable co-distributions may both be considered for the extension of Douglas' case IIa2.
%In this section we give an extension to arbitrary dimension for one of the sub-cases of Douglas's semi-separated case using the EDS technique. This is the case for dimension $n=2$, as discussed in \cite{D41}, \cite{CSMBP94} and \cite{STP}, where $\bold \Phi$ is diagonalisable with distinct eigenvalues, one eigen co-distribution of $\bold \Phi$ is integrable and the other is non-integrable and $[\nabla \Phi, \Phi]=0$. In $n=2$ $[\nabla\Phi, \Phi]=0$ is equivalent to $\Sigma^1$ being a differential ideal. In higher dimensions the problem is rather complicated. In the $n=3$ case for instance, $[\nabla\Phi, \Phi]=0$ does not mean $\Sigma^1$ is a differential ideal anymore; and the cases of one non-integrable co-distribution and two non-integrable co-distributions may both be considered for the extension of Douglas'case IIa2. In this section we shall explicitly deal with the extension to dimension $n$ given by a diagonalisable $\bold \Phi$ with distinct eigenvalues with exactly $n-1$ co-distributions being integrable. We note that this may be considered as the extension of case IIa2 ($[\nabla\Phi, \Phi]=0$) or case III ($[\nabla\Phi, \Phi]\neq 0$) in the sense of Douglas' classification. As shown in theorem \ref{DI-first-step-cond} this system is variational only when $\Sigma^1$ is a differential ideal.\\

%\subsection{n-1 eigen-distributions are integrable and one is non-integrable}
 Consider a given system of second-order ordinary differential equations
\[
\ddot x^a = F^a (t, x^b, \dot x^b), \ \ a, b = 1, \dots, n,
\] for which $\bold \Phi$ is diagonalisable with distinct eigenvalues with exactly $n-1$ co-distributions being integrable. As shown in Theorem \ref{DI-first-step-cond} this system can be variational only when $\tilde \Sigma^1$ is a differential ideal. Let's suppose that we have differential ideal at the first step, i.e. $\tilde \Sigma^1=Sp\{\phi^{cV}\wedge\phi^{cH}:c=1,...,n\}$ generates a differential ideal. Without loss of generality, let us also assume that the only non-integrable co-distribution is $Sp\{\phi^{bV}, \phi^{bH}\}$ for some $b \in \{1,2,...,n\}$ and the other $n-1$ co-distributions, $Sp\{\phi^{aV},\phi^{aH}: a \neq b \}$, are integrable. Then we have:
\begin{align*}
d\omega^b&=d(\phi^{bV}\wedge \phi^{bH})\\
        &=\xi^b_b\wedge \omega^b+\xi^b_a\wedge\omega^a, \quad a\neq b \text{ and } \xi^b_a \neq 0 \text{ for some $a\neq b$},\\
d\omega^a&=d(\phi^{aV}\wedge \phi^{aH})=\xi^a_a \wedge \omega^a \quad (\text{no sum}) \quad a \neq b\\
\end{align*}
where with $A^{aV,H}_{cd}:=\tau^{aV,H}_{cd}-2\tau^{aV,H}_{dc}$,
\begin{equation}\label{xi-defn}
\xi^a_a=A^{aV}_{ac}\phi^{cV}+A^{aH}_{ac}\phi^{cH},\ \xi^b_b=A^{bV}_{bc}\phi^{cV}+A^{bH}_{bc}\phi^{cH},\ \xi^b_a=\tau^{bV}_{aa}\phi^{bV}+\tau^{bH}_{aa}\phi^{bH}.
\end{equation}

We are now looking for 2-forms $\omega \in \tilde \Sigma^1$, i.e. $\omega=r_c\omega^c, c=1,...,n$ with $d\omega=0$ and all the $r$'s non-zero for non-degenerate solutions. We have
$$d\omega=\sum_a(dr_a+r_b\xi^b_a+r_a\xi^a_a)\wedge\phi^{aV}\wedge\phi^{aH}+(dr_b+r_b\xi^b_b)\wedge\phi^{bV}\wedge\phi^{bH},\quad a \neq b .$$
Putting $d\omega=0$, we get a system of Pfaffian equations:
%\begin{align*}
%&(dr_b+r_b\xi^b_b)\wedge\phi^{bV}\wedge\phi^{bH}=0,\\
%&(dr_a+r_b\xi^b_a+r_a\xi^a_a)\wedge\phi^{aV}\wedge\phi^{aH}=0,\quad a \neq b \quad (\text{no sum on a}),
%\end{align*}
%equivalently,
\begin{align}
&dr_b+r_b\xi^b_b=-P_b\phi^{bV}-Q_b\phi^{bH}, \quad \text{$b$ is fixed,}  \label{pfaffian2-1}\\
&dr_a+r_b\xi^b_a+r_a\xi^a_a=-P_a\phi^{aV}-Q_a\phi^{aH} \quad a\neq b \ (\text{no sum on $a$}), \label{pfaffian2-2}
\end{align}
where $P_a$'s, $Q_a$'s, $P_b$ and $Q_b$ are arbitrary functions on $E$.\\
Following the EDS procedure, we extend $E$ to a new manifold $N$ with coordinates $(t,x^c,u^c,r_c,P_c,Q_c)$ and now the problem is to find the integrable distributions on $N$ with $\sigma_c=0$ where
\begin{align}
\sigma_b:=&dr_b+r_b\xi^b_b+P_b\phi^{bV}+Q_b\phi^{bH}  \label{EDS-sigma-1}\\
\sigma_a:=&dr_a+r_b\xi^b_a+r_a\xi^a_a+P_a\phi^{aV}+Q_a\phi^{aH} \quad a\neq b \ (\text{no sum on $a$})\label{EDS-sigma-a}
\end{align}
Continuing the EDS process, set $\pi_c^V:=dP_c$ and $\pi_c^H:=dQ_c$, $c=1,...,n$. (The $V$ and $H$ superscripts here do not indicate that the forms are vertical or horizontal.) Using this a co-frame on $N$ is $(dt, \phi^{cV}, \phi^{cH}, \sigma_c,\pi^V_c, \pi^H_c)$ for $c=1,...,n$. So the next step is to calculate $d\sigma_c$ modulo the ideal
$\langle\sigma_c\rangle$.

Taking the exterior derivative of \eqref{EDS-sigma-a} gives
\begin{align}
\notag d \sigma_a &= d r_b \wedge \xi^b_a + r_b d \xi^b_a + dr_a\wedge\xi^a_a+r_ad\xi^a_a+\pi^V_a \wedge \phi^{aV} + P_a d \phi^{aV} + \pi^H_a \wedge \phi^{aH} + Q_a d \phi^{aH} \\
%%
%\notag            &= (\sigma_1 - r_1 \xi^1_1 - P_1 \phi^{1V} - Q_1 \phi^{1H}) \wedge \xi^1_a + r_1 d \xi^1_a \\
%\notag            &\quad+(\sigma_a - r_1 \xi^1_a-r_a\xi^a_a - P_a \phi^{aV} - Q_a \phi^{aH}) \wedge \xi^a_a + r_a d \xi^a_a \\
%\notag            &\quad + \pi^V_a \wedge \phi^{aV} + P_a d \phi^{aV} + \pi^H_a \wedge \phi^{aH} + Q_a d \phi^{aH}\\
%\intertext{So modulo $<\sigma_k>$, this gives:}
\label{d-sig}  &\equiv (- r_b \xi^b_b - P_b \phi^{bV} - Q_b \phi^{bH}) \wedge \xi^b_a + r_b d \xi^b_a \\
\notag            &\quad+(- r_b \xi^b_a-r_a\xi^a_a - P_a \phi^{aV} - Q_a \phi^{aH}) \wedge \xi^a_a + r_a d \xi^a_a \\
\notag            &\quad + \pi^V_a \wedge \phi^{aV} + P_a d \phi^{aV} + \pi^H_a \wedge \phi^{aH} + Q_a d \phi^{aH}\ (\text{mod } \sigma_c )\text{(no sum on a).}
\end{align}
The next step is to see what terms in $d\sigma_a$ can be absorbed into $\pi^V_a$ and $\pi^H_a$.  Given that in
each $d \sigma_a$, any term that can be written as $\beta \wedge \phi^{aV}$ or $\beta \wedge \phi^{aH}$ can be
absorbed into terms $\pi^V_a \wedge \phi^{aV}$ and $\pi^H_a \wedge \phi^{aH}$ respectively. After this absorption these terms are denoted as $\tilde{\pi}^V_a \wedge \phi^{aV}$  and $\tilde{\pi}^H_a \wedge \phi^{aH}$ and the remainder that can't be absorbed represents the \textit{`torsion'} of the
system.\\
Working through \eqref{d-sig}, it can be seen from \eqref{dxi-aa} and the integrability of eigen co-distributions $Sp\{\phi^{aV},\phi^{aH}, a \neq b\}$ that terms $d\xi^a_a$, $d\phi^{aV}$, and $d\phi^{aH}$ give no torsion and terms that contribute to the torsion are (remember that $b$ is fixed)
\begin{equation}\label{torsion}
T_a:= \left(r_b(\xi^a_a-\xi^b_b)-P_b\phi^{bV}-Q_b\phi^{bH}\right)\wedge \xi^b_a +r_b d\xi^b_a \quad \text{(no sum on a)}.
\end{equation}

By looking at \eqref{torsion}, it can be seen that for those $a \neq b$ where $\xi^b_a=0$ or equivalently $\tau^{bV}_{aa}=0$, the torsion $T_a$ vanishes without any extra conditions. It then follows that
$$d \sigma_a \equiv \tilde{\pi}^V_a \wedge \phi^{aV}+\tilde{\pi}^H_a \wedge \phi^{aH} \ (\text{mod } \sigma)\ (\text{no sum}).$$
However, the eigen co-distribution $D^{\perp}_b=Sp\{\phi^{bV},\phi^{bH}\}$ is non-integrable by assumption, so there exists at least one $\xi^b_a\neq 0, a \neq b$; and each such $a$ corresponds to one $T_a$. So we split the problem into two subcases:
\begin{itemize}
\item[i)] There is only one fixed $a$ such that $\xi^b_a\neq 0$ or equivalently $\tau^{bV}_{aa}\neq 0$.

\item[ii)] There is more than one fixed $a$ such that $\xi^b_a\neq 0$, say there are $a_i \neq b, i=1,2,...$ such that $\tau^{bV}_{a_ia_i}\neq 0$.
\end{itemize}

Considering $a \neq b$ with $\xi^b_a\neq 0$, and remembering that $b$ is fixed, computing $d\xi^b_a$ we get:
\begin{align*}
\notag d\xi^b_a &=d\tau^{bV}_{aa}\wedge\phi^{bV}+\tau^{bV}_{aa}d\phi^{bV}+d\tau^{bH}_{aa}\wedge\phi^{bH}+\tau^{bH}_{aa}d\phi^{bH}\\
&=(\Gamma(\tau^{bV}_{aa})dt+X_c^V(\tau^{bV}_{aa})\phi^{cV}+X_c^H(\tau^{bV}_{aa})\phi^{cH})\wedge\phi^{bV}\\
&\quad+\tau^{bV}_{aa}(-\lambda_bdt\wedge\phi^{bH}+\tau^{bH}_{cb}\phi^{bV}\wedge\phi^{cH}+\tau^{bV}_{cb}\phi^{bV}\wedge\phi^{cV}-\frac{1}{2}\phi^{bV}(R(X_b^H,X_c^H))\phi^{bH}\wedge\phi^{cH})\\
&\quad+(\Gamma(\tau^{bH}_{aa})dt+X_c^V(\tau^{bH}_{aa})\phi^{cV}+X_c^H(\tau^{bH}_{aa})\phi^{cH})\wedge\phi^{bH}\\
&\quad+\tau^{bH}_{aa}(dt\wedge\phi^{bV}+\tau^{bH}_{cb}\phi^{bH}\wedge\phi^{cH}-\tau^{bV}_{bc}\phi^{bV}\wedge\phi^{cH}).
\end{align*}
And so,
\begin{align*}
\notag T_a& \equiv(r_b(X_b^V(\tau^{bH}_{aa})-X_b^H(\tau^{bV}_{aa})+\tau^{bH}_{aa}A^{aV}_{ab}-\tau^{bV}_{aa}A^{aH}_{ab}+\tau^{bV}_{aa}\tau^{bH}_{bb}-\tau^{bH}_{aa}\tau^{bV}_{bb})\\
\notag  &\quad -P_b\tau^{bH}_{aa}+Q_b\tau^{bV}_{aa})\phi^{bV}\wedge\phi^{bH}\\
\notag &\quad +(\Gamma(\tau^{bV}_{aa})+\tau^{bH}_{aa})dt\wedge \phi^{bV}\\
\notag &\quad+ (\Gamma(\tau^{bH}_{aa})-\lambda_b\tau^{bV}_{aa})dt\wedge \phi^{bH}\\
%\label{torsion-a}
\notag & + (X_c^V(\tau^{bV}_{aa})+\tau^{bV}_{aa}A^{aV}_{ac}+\tau^{bV}_{aa}\tau^{bV}_{cb})\phi^{cV}\wedge\phi^{bV}\\
\notag &+\quad (X_c^H(\tau^{bV}_{aa})+\tau^{bV}_{aa}A^{aH}_{ac}+\tau^{bV}_{aa}(\tau^{bH}_{cb}-\tau^{bH}_{bc})+\tau^{bH}_{aa}\tau^{bV}_{bc})\phi^{cH}\wedge\phi^{bV}\\
\notag &+\quad (X_c^V(\tau^{bH}_{aa})+\tau^{bV}_{aa}A^{aV}_{ac}+\tau^{bH}_{aa}(\tau^{bV}_{cb}-\tau^{bV}_{bc})+\tau^{bV}_{aa}\tau^{bH}_{bc})\phi^{cV}\wedge\phi^{bH}\\
\notag &+\quad (X_c^H(\tau^{bH}_{aa})+\tau^{bH}_{aa}A^{aH}_{ac}+\tau^{bH}_{aa}\tau^{bH}_{cb}+\tau^{bV}_{aa}\phi^{bV}(R(X_b^H,X_c^H)))\phi^{cH}\wedge\phi^{bH}\\
\notag &+\quad (\tau^{bV}_{aa}\tau^{bH}_{cc}-\tau^{bH}_{aa}\tau^{bV}_{cc})\phi^{cV}\wedge\phi^{cH}, c\neq a\neq b \ (\text{sum on $c$})\ (\text{mod $\phi^{aV}, \phi^{aH}$}).
\end{align*}
Using Jacobi identities \eqref{bracket-iden-1-a}, \eqref{bracket-iden-1-b}, \eqref{bracket-iden-3-a}, \eqref{bracket-iden-4-a}, \eqref{bracket-iden-3-b} and \eqref{bracket-iden-4-b} respectively, we have that the coefficients of $dt\wedge \phi^{bV}$, $dt\wedge \phi^{bH}$, $\phi^{cV}\wedge\phi^{bV}$, $\phi^{cH}\wedge\phi^{bV}$, $\phi^{cV}\wedge\phi^{bH}$ and $\phi^{cH}\wedge\phi^{bH}$ in $T_a$ vanish. Therefore the torsion is now
\begin{align}
\notag T_a&\equiv(r_b(X_b^V(\tau^{bH}_{aa})-X_b^H(\tau^{bV}_{aa})+\tau^{bH}_{aa}A^{aV}_{ab}-\tau^{bV}_{aa}A^{aH}_{ab}+\tau^{bV}_{aa}\tau^{bH}_{bb}-\tau^{bH}_{aa}\tau^{bV}_{bb})\\
\label{torsion-a}          &\quad -P_b\tau^{bH}_{aa}+Q_b\tau^{bV}_{aa})\phi^{bV}\wedge\phi^{bH}\\
\notag    &+\quad (\tau^{bV}_{aa}\tau^{bH}_{cc}-\tau^{bH}_{aa}\tau^{bV}_{cc})\phi^{cV}\wedge\phi^{cH}, c \neq a \neq b\  (\text{sum on $c$}) \ (\text{mod } \phi^{aV}, \phi^{aH})
\end{align}
The torsion must be zero for the existence of solutions, and if it is then, for each $a \neq b$ with $\xi^b_a \neq 0$,
$$d \sigma_a \equiv \tilde{\pi}^V_a \wedge \phi^{aV}+\tilde{\pi}^H_a \wedge \phi^{aH} \ (\text{mod } \sigma).$$
We now examine the conditions for torsion to vanish. By looking at \eqref{torsion-a}, we get:\\
\begin{itemize}
\item If there is only one $a \neq b$ such that $\xi^b_a\neq 0$, i.e. case i), then this remaining $T_a$ vanishes if and only if
\begin{equation}\label{Q-b-condition}
Q_b=P_bB_b+r_bC,\quad \text{where } B_b=\frac{\tau^{bH}_{aa}}{\tau^{bV}_{aa}}
\end{equation}
and $$C=\frac{1}{\tau^{bV}_{aa}}(X_b^H(\tau^{bV}_{aa})-X_b^V(\tau^{bH}_{aa})-\tau^{bH}_{aa}A^{aV}_{ab}+\tau^{bV}_{aa}A^{aH}_{ab}-\tau^{bV}_{aa}\tau^{bH}_{bb}+\tau^{bH}_{aa}\tau^{bV}_{bb})$$
\item If there exist at least two $a_i$'s such that $\xi^b_{a_i}\neq 0, a_i\neq b$, i.e. case ii), then the conditions for all torsion to vanish are \eqref{Q-b-condition} and with
\begin{align}
\label{condition-B}&B_b=\frac{\tau^{bH}_{a_ia_i}}{\tau^{bV}_{a_ia_i}},\\
\label{condition-C} &C=C_{a_i}=\frac{1}{\tau^{bV}_{a_ia_i}}(X_b^H(\tau^{bV}_{a_ia_i})-X_b^V(\tau^{bH}_{a_ia_i})-\tau^{bH}_{a_ia_i}A^{a_iV}_{a_ib}+\tau^{bV}_{a_ia_i}A^{a_iH}_{a_ib}-\tau^{bV}_{a_ia_i}\tau^{bH}_{bb}+\tau^{bH}_{a_ia_i}\tau^{bV}_{bb}),
\end{align}
for all such $a_i$.
\end{itemize}
In either case i) or ii) the consequence of the choice for $Q_b$ in condition \eqref{Q-b-condition} affects $\sigma_b$ in \eqref{EDS-sigma-1} and hence $d\sigma_b$. Hence by substituting $Q_b$ into $d\sigma_b$ we get
\begin{align}
\notag d\sigma_b&=dr_b\wedge\xi^b_b+r_bd\xi^b_b+dP_b\wedge\phi^{bV}+P_bd\phi^{bV}+d(P_bB_b+r_bC)\wedge\phi^{bH}+(P_bB_b+r_bC)d\phi^{bH}\\
\label{d-sig-b} &\equiv (-r_b\xi^b_b-P_b\phi^{bV}-(r_bC+P_bB_b)\phi^{bH})\wedge \xi^b_b+r_bd\xi^b_b\\
\notag          &\quad +dP_b\wedge \phi^{bV}+P_bd\phi^{bV}+dP_b\wedge B_b\phi^{bH}+P_bd(B_b\phi^{bH})\\
\notag          &\quad +C(-r_b\xi^b_b-P_b\phi^{bV}-(r_bC+P_bB_b)\phi^{bH})\wedge\phi^{bH}+r_bd(C\phi^{bH})\ (\text{mod } \sigma).
\end{align}
Simplifying \eqref{d-sig-b} we get:
\begin{align}
\label{d-sig-b-b} d\sigma_b&\equiv (\pi^V_b+P_b(\xi^b_b+C\phi^{bH}))\wedge(\phi^{bV}+B_b\phi^{bH})\\
\notag &\quad +r_bd(\xi^b_b+C\phi^{bH})+P_bd(\phi^{bV}+B_b\phi^{bH})
 \ (\text{mod } \sigma )
\end{align}
At this point the problem breaks down into two further subcases:
\begin{itemize}

\item[1.] If $d(\phi^{bV}+B_b\phi^{bH})=\kappa\wedge(\phi^{bV}+B_b\phi^{bH})$, for some 1-form $\kappa$, then the condition for the existence of non-degenerate solutions is that
    \begin{align}\label{d-sig-b-condition}
    d(\xi^b_b+C\phi^{bH})=\beta\wedge(\phi^{bV}+B_b\phi^{bH}), \text{for some 1-form}\ \beta
    \end{align}
    and then $$d\sigma_b\equiv \tilde{\pi}_b^V \wedge (\phi^{bV}+B_b\phi^{bH})\ (\text{mod }\sigma).$$

\item[2.] If $d(\phi^{bV}+B_b\phi^{bH})\neq\kappa\wedge(\phi^{bV}+B_b\phi^{bH})$, then in order to remove torsion we require $r_bd(\xi^b_b+C\phi^{bH})+P_bd(\phi^{bV}+B_b\phi^{bH})\equiv 0 \ (\text{mod }\phi^{bV}+B_b\phi^{bH})$. This results in an equation relating $P_b$ to $r_b$ which would fix $P_b$ as a function of $r_b$, thus we will have lost flexibility in $\pi_b^V=d(P_b)$ to absorb any terms. So in this situation, the problem reduces to finding a solution for $P_b$ in term of $r_b$ to the equation
    \begin{align}
    \label{d-sig-b-condition-equation} (dP_b+P_b(\xi^b_b+C\phi^{bH}))\wedge(\phi^{bV}+B_b\phi^{bH})+r_bd(\xi^b_b
    +C\phi^{bH})+P_bd(\phi^{bV}+B_b\phi^{bH})=0.
    \end{align}
    Thus if there exists a function $P_b$ in term of $r_b$ satisfying equation \eqref{d-sig-b-condition-equation}, then we have
    $$d\sigma_b\equiv 0 \ (\text{mod }\sigma).$$

\end{itemize}

Let assume that we are in subcase $1.$ so that there exists an integrable direction $\alpha_b=\phi^{bV}+B_b\phi^{bH}$ and so assume that $d(\xi^b_b+C\phi^{bH})=\beta\wedge(\phi^{bV}+B_b\phi^{bH})$, since otherwise there would be no non-degenerate solution. Then we move onto the calculation of the freedom in the solution to the inverse problem for this case, we have
\begin{align}
\label{d-sigma-1a}d\sigma_b&\equiv\tilde{\pi}_b^V\wedge(\phi^{bV}+B_b\phi^{bH}),\\
\label{d-sigma-1b}d\sigma_a&\equiv\tilde{\pi}_a^V\wedge\phi^{aV}
+\tilde{\pi}_a^H\wedge\phi^{aH}.
\end{align}
We change the basis $\{\phi^{kV},\phi^{kH}\}$  to the basis $\{\gamma^{kV},\gamma^{kH}\}$ using
\begin{align*}
\gamma^{1V,H}=\phi^{1V,H}+\phi^{2V,H}+...+\phi^{nV,H},\\
\gamma^{cV,H}=\phi^{1V,H}-\phi^{cV,H}, \quad c=2,...,n.
\end{align*}
We then get the optimal tableau:
\begin{center}
\renewcommand{\arraystretch}{1.25}
$\tilde \Pi = $
\begin{tabular}{c|c c c c c c c c c c}
           & $\gamma^{1V}$  & $\gamma^{1H}$  & $\gamma^{2V}$ & $\gamma^{2H}$ & ...    & $\gamma^{bV}$ & $\gamma^{bH}$&...& $\gamma^{nV}$ & $\gamma^{nH}$  \\ \hline
$\sigma_1$ &$\tilde \pi^V_1$&$\tilde\pi^H_1$&      $\tilde \pi^V_1$    &      $\tilde \pi^H_1$    & ...    & $\tilde \pi^V_1$   & $\tilde \pi^H_1$&...& $\tilde \pi^V_1$   & $\tilde \pi^H_1$  \\
$\sigma_2$ &     $\tilde \pi^V_2$     &     $\tilde \pi^H_2$     &  $-\tilde \pi^V_2$  &  $-\tilde\pi^H_2$  & ...    & $0$   &  $0$&...& $0$   & $0$  \\
\vdots     &    \vdots   &  \vdots     &      \vdots &  \vdots     & ...    &  \vdots &  \vdots&...&  \vdots &  \vdots \\
$\sigma_b$ &     $\tilde \pi^V_b$     &     $B_b\tilde \pi^V_b$     &      $0$    &      $0$    & ...    & $-\tilde \pi^V_b$   & $-B_b\tilde \pi^V_b$ &...&  $0$  &  $0$\\
\vdots     &    \vdots   &  \vdots     &      \vdots &  \vdots     & ...    &  \vdots &  \vdots&...&  \vdots &  \vdots \\
$\sigma_n$ &     $\tilde \pi^V_n$     &     $\tilde \pi^H_n$     &      $0$    &      $0$    & ...    & $0$   & $0$ &...&  $-\tilde \pi^V_n$  &  $-\tilde \pi^H_n$
\end{tabular}
\end{center}
This tableau gives Cartan characters: $s_1=n$, $s_2=n-1$, $s_i=0$ for $i\geq 3$.\\
The final step is to check for involution. To do this, we let $t$ be the number of ways that $\tilde \pi^V_a$, $\tilde\pi^H_a$ and $\tilde \pi^V_b$ can be altered such that \eqref{d-sigma-1a} and \eqref{d-sigma-1b} are unchanged. It can be seen that if we write:
\begin{align*}
\bar\pi^{V}_a&=\tilde\pi^{V}_a+f_a^1\phi^{aV}+f_a^2\phi^{aH},\\
\bar\pi^{H}_a&=\tilde\pi^{H}_a+f_a^3\phi^{aH}+f_a^2\phi^{aV},\\
\bar\pi^{V}_b&=\tilde\pi^{V}_b+f_b(\phi^{bV}+B_b\phi^{bH}),
\end{align*}
then \eqref{d-sigma-1a} and \eqref{d-sigma-1b} would be unchanged if we replace $\tilde \pi^{V,H}_a$ by $\bar \pi^{V,H}_a$ and $\tilde \pi^{V}_b$ by $\bar \pi^{V}_b$. Thus for each $a \neq b$ we have three degrees of freedom in adding terms to $\tilde\pi^{V}_a$ and $\tilde\pi^{H}_a$, giving $3(n-1)$ degrees of freedom for all $\tilde \pi^{V,H}_a$. We have only 1 degree of freedom in adding terms to $\pi^V_b$. Therefore in this case, $t=3(n-1)+1=3n-2$, which equal to $s_1+2s_2$ as required for involution. So the solution depends on $n-1$ functions of two variables in this case.\\

Now we consider subcase 2. where there is no integrable direction $\alpha_b$, and assume that there is solution for equation \eqref{d-sig-b-condition-equation}, so we have
\begin{align}
\label{d-sigma-2a}d\sigma_b&\equiv0,\\
\label{d-sigma-2b}d\sigma_a&\equiv\tilde{\pi}_a^V\wedge\phi^{aV}
+\tilde{\pi}_a^H\wedge\phi^{aH}.
\end{align}
The tableau corresponding with this system is
\begin{center}
\renewcommand{\arraystretch}{1.25}
$\tilde \Pi = $
\begin{tabular}{c|c c c c c c c c c c}
           & $\gamma^{1V}$  & $\gamma^{1H}$  & $\gamma^{2V}$ & $\gamma^{2H}$ & ...    & $\gamma^{bV}$ & $\gamma^{bH}$&...& $\gamma^{nV}$ & $\gamma^{nH}$  \\ \hline
$\sigma_1$ &$\tilde \pi^V_1$&$\tilde \pi^H_1$&      $\tilde \pi^V_1$    &      $\tilde \pi^H_1$    & ...    & $\tilde \pi^V_1$   & $\tilde \pi^H_1$&...& $\tilde \pi^V_1$   & $\tilde \pi^H_1$  \\
$\sigma_2$ &     $\tilde \pi^V_2$     &     $\tilde \pi^H_2$     &  $-\tilde \pi^V_2$  &  $-\tilde\pi^H_2$  & ...    & $0$   &  $0$&...& $0$   & $0$  \\
\vdots     &    \vdots   &  \vdots     &      \vdots &  \vdots     & ...    &  \vdots &  \vdots&...&  \vdots &  \vdots \\
$\sigma_b$     &    $0$   & $0$     &      $0$ &  $0$     & ...    &  $0$ &  $0$&...&  $0$ &  $0$ \\
\vdots     &    \vdots   &  \vdots     &      \vdots &  \vdots     & ...    &  \vdots &  \vdots&...&  \vdots &  \vdots \\
$\sigma_n$ &     $\tilde \pi^V_n$     &     $\tilde \pi^H_n$     &      $0$    &      $0$    & ...    & $0$   & $0$ &...&  $-\tilde \pi^V_n$  &  $-\tilde \pi^H_n$
\end{tabular}
\end{center}
This tableau gives Cartan characters: $s_1=n-1$, $s_2=n-1$, $s_i=0$ for $i\geq3$.\\
The final step is to check for involution. To do this, we let t equal the number of ways that $\tilde \pi^V_a$ and $\tilde\pi^H_a$ can be altered such that \eqref{d-sigma-2a} and \eqref{d-sigma-2b} is unchanged. It can be seen that if we write
\begin{align*}
\bar\pi^{V}_a&=\tilde\pi^{V}_a+f_a^1\phi^{aV}+f_a^2\phi^{aH},\\
\bar\pi^{H}_a&=\tilde\pi^{H}_a+f_a^3\phi^{aH}+f_a^2\phi^{aV}
\end{align*}
then \eqref{d-sigma-2a} and \eqref{d-sigma-2b} would be unchanged if we replace $\tilde \pi^{V,H}_a$ by $\bar \pi^{V,H}_a$. Thus for each $a \neq b$ we have three degrees of freedom in adding terms to $\tilde\pi^{V}_a$ and $\tilde\pi^{H}_a$, giving $3(n-1)$ degrees of freedom for all $\tilde \pi^{V,H}_a$. Therefore in this case, $t=3(n-1)$, which equals $s_1+2s_2$ as required for involution. So the solution depends on $n-1$ functions of two variables in this case.\\
We note here that we follow Anderson and Thompson in reporting only the highest order term in the degree of freedom.
%%%%%%%%%%%%%%%%%%%%%%%%%%%%%%%%%%%%%%%%%%%%%%%%%%
%%%%%%%%%%%%%%%%%%%%%%%%%%%%%%%%%%%%%%%%%%%%%%%%%%%%%%%%%%%%%%%%%%%%%%%%%%%%%%%%%%%%%%%%%%%

We now summarise the results for the inverse problem of this class of second-order ODE by the following theorem.
\begin{thm}\label{case-2a2-n-results}
In the case where $\bold \Phi$ is diagonalisable with distinct eigenvalues and exactly one non-integrable co-distribution there are three possibilities.
\begin{itemize}
\item If $\langle \tilde \Sigma^1 \rangle$ is not a differential ideal, then there is no non-degenerate solution.
\item Suppose $\langle \tilde \Sigma^1 \rangle$ is a differential ideal, and there is an integrable direction in the non-integrable co-distribution. If \eqref{d-sig-b-condition} holds, then the solution depends on $n-1$ arbitrary function of $2$ variables. If \eqref{d-sig-b-condition} does not hold, then there is no solution.
\item Suppose $\langle\tilde \Sigma^1 \rangle$ is a differential ideal, and there is no integrable direction in the non-integrable co-distribution. If \eqref{d-sig-b-condition-equation} admits a solution, then the solution depends on $n-1$ arbitrary function of $2$ variables. If \eqref{d-sig-b-condition-equation} does not admit a solution, then there is no solution.
\end{itemize}
\end{thm}
%In summary, for this case of the inverse problem, we examine the existence of non-degenerate solutions as follows \\
%Firstly, we check the conditions in \eqref{Sigma-1-diff-ideal} for $\Sigma^1$ to be a differential ideal. If not, we can immediately conclude that there is no non-degenerate solution. \\
%Secondly, we examine the possibility of having an integrable direction $\alpha_b=\phi^{bV}+B_b\phi^{bH}$. For the subcase i) there is only one $a$ such that $\tau^{bV}_{aa}\neq 0$, putting $B_b=\frac{\tau^{bH}_{aa}}{\tau^{bV}_{aa}}$ then according to the corollary  \ref{integrable-direction-cond-diff-cor} the only one condition needed to be considered for the existence of integrable direction $\alpha_b=\phi^{bV}+B_b\phi^{bH}$ is
%$$X_a^V(B_b)=B_b\tau^{bV}_{ba}-\tau^{bH}_{ba}.$$
%
%For the subcase ii) there is more than one $a$ such that $\tau^{bV}_{aa} \neq 0$ and as it is shown in the EDS procedure that, in order for torsion to vanish, the ratio condition \eqref{condition-B} and the C condition \eqref{condition-C} must hold and so there exists an integrable direction $\alpha_b$ in the non-integrable eigen-distribution $Sp\{\phi^{bV},\phi^{bH}\}$ by corollary  \ref{integrable-direction-cond-diff-cor}.
%
%Finally, if there is an integrable direction, i.e subcase 1., we then check if condition \eqref{d-sig-b-condition} holds for solutions. If an integrable direction does not exist, i.e. subcase 2., the existence of solutions depends on whether or not the equation \eqref{d-sig-b-condition-equation} has solution $P_b$.

\section{Examples \label{examples}}
In this section, we shall provide a number of examples to illustrate our results in previous section. These examples cover most of the subcases of one non-integrable eigen co-distribution case. Besides, we also give two examples (example \ref{counterxmpl} and \ref{counterxmpl-2}) of the case $n=3$ with two non-integrable co-distributions. We note that while example \ref{counterxmpl} may be considered as an extension of Douglas's case IIa2, example \ref{counterxmpl-2} may be of Douglas' case IIIa.

\begin{xmpl} This is an example of non-existence for $n=2$ because there is not a differential ideal at step 1, see theorem \ref{DI-first-step-cond}. This is in Douglas' case IIIb. Consider the system
\begin{equation}
\ddot{x}=\dot{y}, \ \ddot{y} =y
\end{equation}
on an appropriate domain. This example was considered by Prince \cite{P00} who showed by direct calculation that no non-degenerate solution exists. $\bold \Phi$ is given by
\[
\bold \Phi =\left( \begin{array}{cc} 0 & 0 \\
0 & -1
\end{array}\right).
\]
The eigenvalues and corresponding eigenvectors are:
\begin{eqnarray*}
0 &\quad\text{and}\quad& X_1:=(a,0), \\
-1 &\quad\text{and}\quad& X_2:=(0,b)
\end{eqnarray*}
for some parameters $a,b$.
We calculate $\hnabla_\Gamma X_{1}^V$ and $\hnabla_\Gamma X_{2}^V$ to find $\tau^{i\Gamma}_j$, $i,j \in \{1,2\}$. In this case, by choosing $a=1$ we get $\hnabla_\Gamma X_{1}^V=0$, i.e. $\tau^{1\Gamma}_1=0=\tau^{2\Gamma}_1$, but there is no $b\neq 0$ such that $\hnabla_\Gamma X_{2}^V=0$, in particular we have $\tau^{1\Gamma}_2=-\frac{b}{2}$ and $\tau^{2\Gamma}_2=\frac{\Gamma(b)}{b}$. This immediately implies $\Sigma^1$ is not a differential ideal as $\tau^{1\Gamma}_2\neq0$ by corollary \ref{firststep-condition}. The functions $\tau^{kV}_{ij}$ and $\tau^{kH}_{ij}$ are also easy to compute
and apart from other $\tau^{kV,H}_{ij}$ we have
\[
\tau^{1V}_{22}=0, \quad \tau^{2V}_{11}=0.\]

So this is the case where $\bold \Phi$ is diagonalisable with distinct eigenvalues and the eigen co-distribution $Sp\{\phi^{2V}, \phi^{2H}\}$ is integrable and $Sp\{\phi^{1V}, \phi^{1H}\}$  is non-integrable since $\tau^{1\Gamma}_2\neq0$. Therefore there is no non-degenerate solution of the inverse problem since $\langle\tilde \Sigma^1\rangle$ is not a differential ideal by the theorem \ref{DI-first-step-cond}.\\

\end{xmpl}
%%%%%%%%%%%%%%%%%%%%%%%%%%%%%%%%%%%%%%%%%%%%%%%%%%%%%%%%%%%%%%%%%%%%%%%%%%%%%%%%%%%%%%%%%%%%%%%%%%%%%%%%%%%%%%%%%%

\begin{xmpl}
This is another non-existence example, this time for $n=4$. The example is in subcase i) but fails the condition of the further subcase 1. Consider the system
\begin{equation}
\ddot{x}=x, \
\ddot{y} = 0, \
\ddot{z}=\dot{y}/\dot{z},\
\ddot{w}=\dot{w}
\end{equation}
on an appropriate domain. We find
\[
\bold \Phi =\left( \begin{array}{cccc} -1 & 0 & 0 &0\\
0 & 0 & 0&0 \\
0&-\frac{\dot{y}}{4\dot{z}^3}&\frac{3\dot{y}^2}{4\dot{z}^4}&0\\
0&0&0&-\frac{1}{4} \end{array}\right).
\]
In this case it is possible to choose the eigenvectors $X_a$ such that $\hnabla_\Gamma X_a^V=0$ and so the eigenvalues and corresponding scaled eigenvectors are
\begin{eqnarray*}
-1 &\quad\text{and}\quad& X_1=(1,0,0,0), \\
0 &\quad\text{and}\quad& X_2=(0,3\dot{y},\dot{z},0), \\
\frac{3\dot{y}^2}{4\dot{z}^4} &\quad\text{and}\quad& X_3=(0,0,\frac{1}{\sqrt{\dot{z}}},0),\\
-\frac{1}{4}&\quad\text{and}\quad& X_4=(0,0,0,\sqrt{\dot{w}}).
\end{eqnarray*}
The non-zero $\tau^{aV}_{bc}$ and $\tau^{aH}_{bc}$ are
\[
\tau^{2V}_{22}=3, \quad \tau^{3V}_{22} =-2\dot{z}\sqrt{\dot{z}}, \quad \tau^{3H}_{22} =\frac{3\dot{y}}{\sqrt{\dot{z}}}, \quad \tau^{3V}_{23} =-\frac{1}{2}, \]
 \[\tau^{3V}_{32} =1, \quad \tau^{3V}_{33} =-\frac{1}{2\dot{z}\sqrt{\dot{z}}}, \quad \tau^{3H}_{33} =-\frac{3\dot{y}}{4\dot{z}\sqrt{\dot{z}}}, \quad \tau^{4V}_{44}=\frac{1}{2\sqrt{\dot{w}}},\quad
\tau^{4H}_{44}=\frac{1}{4\sqrt{\dot{w}}}.\]

These results show we are in the case where $\bold \Phi$ is diagonalisable with distinct eigenvalues and eigen co-distributions $Sp\{\phi^{1V}, \phi^{1H}\}$, $Sp\{\phi^{2V}, \phi^{2H}\}$ and $Sp\{\phi^{4V}, \phi^{4H}\}$ are integrable and $Sp\{\phi^{3V}, \phi^{3H}\}$ is non-integrable and $\langle\tilde \Sigma^1\rangle$ is a differential ideal. In particular, we are in the subcase i) where there is only $\xi^3_2\neq 0$ corresponding to $\tau^{3V}_{22} \neq 0$. We also find that the condition $$X_2^V(B_3)=B_3\tau^{3V}_{32}-\tau^{3H}_{32},$$
where $X_2^V=3\dot{y}\frac{\partial}{\partial \dot{y}}+\dot{z}\frac{\partial}{\partial \dot{z}}$ and $B_3=\frac{\tau^{3H}_{22}}{\tau^{3V}_{22}}=-\frac{3\dot{y}}{2\dot{z}^2}$ is satisfied for an integrable direction $\alpha_3=\phi^{3V}+B_3\phi^{3H}$ in the non-integrable co-distribution. The existence of a  non-degenerate solution depends on whether or not,
$$d(\xi^3_3+C\phi^{3H})=\kappa\wedge \alpha_3.$$
Using the formula for $C$ from \eqref{condition-C}, we get
\begin{align*}
C=C_2&=\frac{1}{\tau^{3V}_{22}}(X_3^H(\tau^{3V}_{22})-X_3^V(\tau^{3H}_{22})-\tau^{3H}_{22}A^{2V}_{23}+\tau^{3V}_{22}A^{2H}_{23}-\tau^{3V}_{22}\tau^{3H}_{33}+\tau^{3H}_{22}\tau^{3V}_{33})\\
 &=\frac{3\dot{y}}{4\dot{z}^3\sqrt{\dot{z}}}(\dot{z}^2-1)
\end{align*} and furthermore we have $d\xi^3_3=d(2\phi^{2V})=0$.
Therefore there is no regular solution since $d(C\phi^{3H})\neq \kappa\wedge \alpha_3$.
\end{xmpl}
%%%%%%%%%%%%%%%%%%%%%%%%%%%%%%%%%%%%%%%%%%%%%%%%%%%%%%%%%%%%%%%%%%%%%%%%%%%%%%%%%%%%%%%%%%%%%%%%%%%%%%%

\begin{xmpl}
This example is from Aldridge et al \cite{Al06} which was modified from one of Douglas' examples in \cite{D41}. However, in their calculation only the differential ideal step was applied to find the differential ideal. Then they had to use the differential conditions of the original Helmholtz conditions to determine the existence of the solution. By following the scheme given at the end of the previous section we quickly conclude that the solution of this system depends on two functions of 2 variables each. This example is the candidate for our case i) where only one $\tau^{bV}_{aa}\neq 0$ for $a\neq b$.\\
Consider the system
\begin{equation}
\ddot{x}=-x, \
\ddot{y} = y^{-1}(1+\dot{y}^2+\dot{z}^2), \
\ddot{z}=0
\end{equation}
on an appropriate domain. Denoting the derivatives by $u,v,w$, we find
\[
\bold \Phi = \frac{1}{y^2}\left( \begin{array}{ccc} y^2 & 0 & 0 \\
0 & 2(1+w^2) & -2vw \\
0&0&0 \end{array}\right).
\]
Eigenvalues and corresponding eigenvectors $X_a$ chosen so that $\hnabla_\Gamma X_a^V=0$ are
\begin{eqnarray*}
\lambda_1=0 &\quad\text{and}\quad& X_1=(0,vw,1+w^2), \\
\lambda_2=1 &\quad\text{and}\quad& X_2=(1,0,0), \\
\lambda_3=2y^{-2}(1+w^2) &\quad\text{and}\quad& X_3=(0,y,0).
\end{eqnarray*}
The only non-zero functions $\tau^{aV}_{bc}$ and $\tau^{aH}_{bc}$ are
\[
\tau^{3V}_{11}=\frac{v}{y}, \quad \tau^{1V}_{11} =2w, \quad
\tau^{3V}_{31}=w, \quad \tau^{3H}_{11}=-\frac{1+w^2}{y^2}.\]
These results show that our system is in the case that $\bold \Phi$ is diagonalisable with distinct eigenvalues and the eigen co-distributions $Sp\{\phi^{1V}, \phi^{1H}\}$ and $Sp\{\phi^{2V}, \phi^{2H}\}$ are integrable and the third one is not and $\langle\tilde \Sigma^1\rangle$ is a differential ideal. In particular, this is in the subcase i) where there is only one $\tau^{3V}_{11} \neq 0$. Checking the condition for an integrable direction $\alpha_3=\phi^{3V}+B_3\phi^{3H}$ we get $$X_1\left(\frac{\tau^{3H}_{11}}{\tau^{3V}_{11}}\right)=\frac{\tau^{3H}_{11}}{\tau^{3V}_{11}}\tau^{3V}_{31}-\tau^{3H}_{31},$$
where $X_1^V=vw\frac{\partial}{\partial v}+(1+w^2)\frac{\partial}{\partial w}$, holds. Now whether or not the system has solutions depends on the remaining condition:
$$d(\xi^3_3+C\phi^{3H})=\kappa\wedge \alpha_3.$$
Using the formula for $C$ from \eqref{condition-C}, we get $C=0$ and furthermore we have $d\xi^3_3=0$.
Therefore the solution of the system depends on 2 functions of 2 variables.
\end{xmpl}

\begin{xmpl}
This example is in case ii) and the multiplier depends on two arbitrary functions each of two variables and one function of one variable.\\
Consider the system
\begin{equation}
\ddot{x}=z, \
\ddot{y} = x\dot{x}+z\dot{z}, \
\ddot{z}=x
\end{equation}
on an appropriate domain. Again denoting the derivatives by $u,v,w$, we find
\[
\bold \Phi =\left( \begin{array}{ccc} 0 & 0 & -1 \\
-\frac{u}{2} & 0 & -\frac{w}{2} \\
-1&0&0 \end{array}\right).
\]
In this case it is possible to choose the eigenvectors $X_a$ such that $\hnabla_\Gamma X_a^V=0$ and so the eigenvalues and chosen corresponding eigenvectors are:
\begin{eqnarray*}
0 &\quad\text{and}\quad& (0,1,0), \\
1 &\quad\text{and}\quad& (-2,u-w,2), \\
-1 &\quad\text{and}\quad& (2,u+w,2).
\end{eqnarray*}
The $\tau^{aV}_{bc}$ and $\tau^{aH}_{bc}$ are zero except for
\[
\tau^{1V}_{22}=-4, \quad \tau^{1V}_{33} =4.\]

These results show that our system is in the case that $\bold \Phi$ is diagonalisable with distinct eigenvalues and eigen co-distribution $\{\phi^{1V}, \phi^{1H}\}$ is non-integrable and $Sp\{\phi^{2V}, \phi^{2H}\}$ and $Sp\{\phi^{3V}, \phi^{3H}\}$ are integrable and $\langle\tilde \Sigma^1\rangle$ is a differential ideal. We are in subcase ii). In addition, the ratio condition \eqref{condition-B},
$$\frac{\tau^{1H}_{22}}{\tau^{1V}_{22}}=0=\frac{\tau^{1H}_{33}}{\tau^{1V}_{33}}$$
and the condition \eqref{condition-C}, $C_2=0=C_3$ are satisfied. We can conclude that there is an integrable direction inside the non-integrable co-distribution, which is just $\phi^{1V}$. Furthermore we have $\xi^1_1=0$ and so $d(\xi^1_1+C\phi^{1H})=0$. Therefore the conclusion is the solution of the system depends upon $2$ arbitrary functions of $2$ variables (plus one function of one variable).

In order to identify the structure of the  $r$'s, we return to equation \eqref{pfaffian2-1} and \eqref{pfaffian2-2}. Since in this case we know that $Sp\{\phi^{1V}, \phi^{1H}\}$ is non-integrable and the other two are integrable and $\phi^{1V}$ is integrable direction, we have
\begin{align*}
&dr_1+r_1\xi^1_1=-P_1\phi^{1V}-Q_1\phi^{1H},\\
&dr_2+r_1\xi^1_2+r_2\xi^2_2=-P_2\phi^{2V}-Q_2\phi^{2H},\\
&dr_3+r_1\xi^1_3+r_3\xi^3_3=-P_3\phi^{3V}-Q_3\phi^{3H},
\end{align*}
where $P_1, Q_1, P_2, Q_2, P_3$ and $Q_3$ are arbitrary functions.
As we know $Q_1=P_1B_1+r_1C=0$, and computing $\xi^i_j$'s using \eqref{xi-defn} we get $\xi^1_1=\xi^2_2=\xi^3_3=0$, $\xi^1_2=-4\phi^{1V}$ and $\xi^1_3=4\phi^{1V}$. So we have
\begin{subequations}
\begin{align}
&dr_1=-P_1\phi^{1V}, \label{pfaffian-ex-1-1}\\
&dr_2-4r_1\phi^{1V}=-P_2\phi^{2V}-Q_2\phi^{2H}, \label{pfaffian-ex-1-2}\\
&dr_3+4r_1\phi^{1V}=-P_3\phi^{3V}-Q_3\phi^{3H}. \label{pfaffian-ex-1-3}
\end{align}
\end{subequations}

Equation \eqref{pfaffian-ex-1-1} implies $r_1=G_1(\zeta)$ where $G_1$ is an arbitrary function of $\zeta$ with $d\zeta \in Sp\{\phi^{1V}\}$. Since $\phi^{1V}=\frac{1}{2}(-d(uw)+2dv-xdx-zdz)$, putting $d\zeta=-d(uw)+2dv-xdx-zdz$ we have $\zeta=2v-uw-\frac{x^2}{2}-\frac{z^2}{2}$.
%\begin{align*}
%&dr_1=\tilde{P}_1(\zeta)d\zeta \\
%\Rightarrow \ &r_1=G_1(\zeta)
%\end{align*}
%where $G_1$ is an arbitrary function of $\zeta$.
Now substituting $r_1$ into equation \eqref{pfaffian-ex-1-2} we get
\begin{equation}\label{pfaffian-ex-1-4}
dr_2=2G_1(\zeta)d\zeta-P_2\phi^{2V}-Q_2\phi^{2H}.
\end{equation}
Since $Sp\{\phi^{2V},\phi^{2H}\}$ is integrable, we know that there exist functions $u_2^1$ and $u^2_2$ such that $Sp\{du^1_2, du_2^2\} = Sp\{\phi^{2V},\phi^{2H}\}$. As $\phi^{2V}=\frac{1}{4}\left(d(w-u)+(z-x)dt\right)$ and $\phi^{2H}=\frac{1}{4}\left(d(z-x)+(u-w)dt\right)$, putting $$du_2^i=f_2^i(d(w-u)+(z-x)dt)+g_2^i(d(z-x)+(u-w)dt),$$ and for $f_2^1=2(w-u)$, $g_2^1=2(z-x)$, $f_2^2=-\frac{z-x}{(w-u)^2+(z-x)^2}$ and $g_2^2=\frac{w-u}{(w-u)^2+(z-x)^2}$ we get
\begin{align*}
&u_2^1=(w-u)^2+(z-x)^2\\
&u_2^2=\tan^{-1}\left(\frac{z-x}{w-u}\right)-t
\end{align*}
Now equation \eqref{pfaffian-ex-1-4} gives
$$r_2=2G(\zeta)+\tilde{r}_2(u_2^1,u_2^2),$$ where $G^\prime=G_1$ and $\tilde{r}_2$ is an arbitrary function of $u_2^1$ and $u_2^2$.\\

Similarly, $Sp\{\phi^{3V},\phi^{3H}\}$ is integrable with
$\phi^{3V}=\frac{1}{4}\left(d(u+w)-(x+z)dt\right)$\\ and $\phi^{3H}=\frac{1}{4}\left(d(x+z)-(u+w)dt\right).$ We find $u_3^1=(u+w)^2-(x+z)^2$ and $u_3^2=\tanh^{-1}\left(\frac{x+z}{u+w}\right)-t$ with $Sp\{du_3^1, du_3^2\}= Sp\{\phi^{3V}, \phi^{3H}\}$. This and equation \eqref{pfaffian-ex-1-3} gives
$$r_3=-2G(\zeta)+\tilde{r}_3(u_3^1,u_3^2).$$
To view the multiplier $g_{ab}$, we translate the $r_{ab}$ into $g_{ab}$ using \eqref{conver-r-to-g}. We get,
\begin{align*}
g_{11}&=\frac{w^2}{4}r_1+\frac{1}{16}(r_2+r_3),\\
g_{22}&=r_1,\\
g_{33}&=\frac{u^2}{4}r_1+\frac{1}{16}(r_2+r_3),\\
g_{12}=g_{21}&=-\frac{w}{2}r_1,\\
g_{13}=g_{31}&=\frac{uw}{4}r_1-\frac{1}{16}(r_2+r_3),\\
g_{23}=g_{32}&=-\frac{u}{2}r_1.\\
\end{align*}
And thus,
\begin{align*}
g_{11}&=\frac{w^2}{4}G_1(\zeta)+\frac{1}{16}(\tilde{r}_2(u_2^1,u_2^2)+\tilde{r}_3(u_3^1,u_3^2)),\\
g_{22}&=G_1(\zeta),\\
g_{33}&=\frac{u^2}{4}G_1(\zeta)+\frac{1}{16}(\tilde{r}_2(u_2^1,u_2^2)+\tilde{r}_3(u_3^1,u_3^2)),\\
g_{12}=g_{21}&=-\frac{w}{2}G_1(\zeta),\\
g_{13}=g_{31}&=\frac{uw}{4}G_1(\zeta)-\frac{1}{16}(\tilde{r}_2(u_2^1,u_2^2)+\tilde{r}_3(u_3^1,u_3^2)),\\
g_{23}=g_{32}&=-\frac{u}{2}G_1(\zeta).
\end{align*}
In summary, the most general Cartan two-form for this example is

$$d\theta_L=\frac{1}{2}G'(\zeta)d\zeta\wedge\phi^{1H} +\frac{1}{32}\left[(2G(\zeta)+\tilde r_2(u^1_2,u^2_2))du^1_2\wedge du_2^2
+ (-2G(\zeta)+\tilde r_3(u^1_3,u^2_3))du^1_3\wedge du_2^3\right]$$
While this form is beguiling it is not the generic solution for this class.

\end{xmpl}
We now introduce two $n=3$ examples with two non-integrable eigen co-distributions. These correspond to two cases of differential ideal step in which the system of ordinary differential equations may possibly have non-degenerate solutions. In the first example, the differential ideal step finishes at step 1, i.e. $\tilde \Sigma^1$ generates a differential ideal which demonstrates that the number of non-integrable co-distributions is not equal to the number of steps in the differential process as discussed following Theorem \ref{DI-first-step-cond}. The second example is for the case that $\langle\tilde\Sigma^2\rangle$ is a differential ideal. A full analysis for these cases will be in our forthcoming paper in which we present a classification for the $n=3$ case.
\begin{xmpl}\label{counterxmpl}
Consider the system
\begin{equation}
\ddot{x}=x\dot{z}, \
\ddot{y} = x, \
\ddot{z}=x
\end{equation}
on an appropriate domain. Again denoting the derivatives by $u,v,w$, we find
\[
\bold \Phi = \left( \begin{array}{ccc} -w & 0 & \frac{u}{2} \\
-1 & 0 & 0 \\
-1&0&0 \end{array}\right).
\]
Eigenvalues and corresponding eigenvectors $X_a$ chosen so that $\hnabla_\Gamma X_a^V=0$ are
\begin{eqnarray*}
\lambda_1=\sqrt{-2u+w^2}-w &\quad\text{and}\quad& X_1=(-\sqrt{-2u+w^2}+w,2,2), \\
\lambda_2=-\sqrt{-2u+w^2}-w &\quad\text{and}\quad& X_2=(\sqrt{-2u+w^2}+w,2,2),  \\
\lambda_3=0 &\quad\text{and}\quad& X_3=(0,1,0).
\end{eqnarray*}
The non-zero functions $\tau^{aV}_{bc}$ and $\tau^{aH}_{bc}$ are

\[
\tau^{1V}_{11}=-\tau^{2V}_{11}=\frac{\sqrt{-2u+w^2}-w}{2(2u-w^2)}, \quad \tau^{1H}_{11} =-\tau^{2H}_{11}=\frac{x}{2(2u-w^2)},\]
\[\tau^{1V}_{12}=-\tau^{2V}_{12}=\frac{3\sqrt{-2u+w^2}+w}{2(2u-w^2)}, \quad \tau^{1H}_{12}=-\tau^{2H}_{12}=\frac{-x}{2(2u-w^2)},\]
\[\tau^{1V}_{21}=-\tau^{2V}_{21}=\frac{3\sqrt{-2u+w^2}-w}{2(2u-w^2)}, \quad \tau^{1H}_{21}=-\tau^{2H}_{21}=\frac{x}{2(2u-w^2)},\]
\[\tau^{1V}_{22}=-\tau^{2V}_{22}=\frac{\sqrt{-2u+w^2}+w}{2(2u-w^2)}, \quad \tau^{1H}_{22}=-\tau^{2H}_{22}=\frac{-x}{2(2u-w^2)}.\]

%\[
%\tau^{1V}_{11}=\frac{\sqrt{-2u+w^2}-w}{2(2u-w^2)}, \quad \tau^{1H}_{11} =\frac{x}{2(2u-w^2)},\]
%\[\tau^{1V}_{12}=\frac{3\sqrt{-2u+w^2}+w}{2(2u-w^2)}, \quad \tau^{1H}_{12}=\frac{-x}{2(2u-w^2)},\]
%\[\tau^{1V}_{21}=\frac{3\sqrt{-2u+w^2}-w}{2(2u-w^2)}, \quad \tau^{1H}_{21}=\frac{x}{2(2u-w^2)}.\]
%\[\tau^{1V}_{22}=\frac{\sqrt{-2u+w^2}+w}{2(2u-w^2)}, \quad \tau^{1H}_{22}=\frac{-x}{2(2u-w^2)}.\]
%\[\tau^{2V}_{11}=\frac{-\sqrt{-2u+w^2}+w}{2(2u-w^2)}, \quad \tau^{2H}_{11}=\frac{-x}{2(2u-w^2)}.\]
%\[\tau^{2V}_{12}=\frac{-3\sqrt{-2u+w^2}-w}{2(2u-w^2)}, \quad \tau^{2H}_{12}=\frac{x}{2(2u-w^2)}.\]
%\[\tau^{2V}_{21}=\frac{-3\sqrt{-2u+w^2}+w}{2(2u-w^2)}, \quad \tau^{2H}_{21}=\frac{-x}{2(2u-w^2)}.\]
%\[\tau^{2V}_{22}=\frac{-\sqrt{-2u+w^2}-w}{2(2u-w^2)}, \quad \tau^{2H}_{22}=\frac{x}{2(2u-w^2)}.\]
These results show that our system is in the case that $\bold \Phi$ is diagonalisable with distinct eigenvalues and the two co-distributions $Sp\{\phi^{1V}, \phi^{1H}\}$ and $Sp\{\phi^{2V}, \phi^{2H}\}$ are non-integrable but the differential ideal step finishes at step $1$, that is $\tilde \Sigma^1$ generates a differential ideal. Further examination shows that the solution depends on one arbitrary function of two unknowns as we will show in the future paper.
\end{xmpl}
%%%%%%%%%%%%%%%%%%%%%%%%%%%%%%%%%%%%%%%%%%%%%%%%%%%%%%%%%%%%%%%%%%%%%%%%%%%%%%%%%%%%%
\begin{xmpl}\label{counterxmpl-2}
Consider the system
\begin{equation}
\ddot{x}=zt, \
\ddot{y} = 0, \
\ddot{z}=x
\end{equation}
on an appropriate domain. Again denoting the derivatives by $u,v,w$, we find
\[
\bold \Phi = \left( \begin{array}{ccc} 0 & 0 & -t \\
0 & 0 & 0 \\
-1&0&0 \end{array}\right).
\]
Eigenvalues and corresponding eigenvectors $X_a$ are
\begin{eqnarray*}
\lambda_1=\sqrt{t} &\quad\text{and}\quad& X_1=(-\sqrt{t},0,1), \\
\lambda_2=-\sqrt{t} &\quad\text{and}\quad& X_2=(\sqrt{t},0,1),  \\
\lambda_3=0 &\quad\text{and}\quad& X_3=(0,1,0).
\end{eqnarray*}
The structure functions $\tau$'s are zero except for

\[
\tau^{1\Gamma}_{1}=\tau^{2\Gamma}_{2}=-\tau^{1\Gamma}_{2}=-\tau^{2\Gamma}_{1}=\frac{1}{4t}.\]

These results show that our system is in the case that $\bold \Phi$ is diagonalisable with distinct eigenvalues and the co-distributions $Sp\{\phi^{1V}, \phi^{1H}\}$ and $Sp\{\phi^{2V}, \phi^{2H}\}$ are non-integrable and the third one is integrable and $\tilde \Sigma^1$ does not generate a differential ideal. Examining further we get, \[d(\omega^1-\omega^2)=\frac{1}{4t} dt \wedge (\omega^1-\omega^2),\] and so $\langle\tilde\Sigma^2\rangle$ is a differential ideal. The Cartan two-form for this example is \[\omega=\frac{G}{\sqrt[4]{t}}(\omega^1-\omega^2)+\tilde{r}_3(u_3^1,u_3^2)\omega^3, \]
where $G$ is a constant, and $\tilde{r}_3$ is an arbitrary function of two variables $u^1_3:=y-vt$ and $u^2_3:=v $.
\end{xmpl}
\section{Conclusion}\label{classification-scheme}

We finish this paper with a new proposal for the classification scheme for the inverse problem in dimension $n$:

\begin{itemize}
\item[A.] $\bold{\Phi}=\lambda I_n$. This is equivalent to $\langle \Sigma^0 \rangle$ being a differential ideal (see proposition \ref{identity thm}).
\item[B.] $\bold{\Phi}$ is diagonalisable with distinct eigenvalues (real or complex). Further subcases will be divided according to the integrability of the lifted two-dimensional  eigen co-distributions of $\bold{\Phi}$ i.e. $q$ co-distributions are non-integrable and $n-q$ are integrable. According to our theorem \ref{DI-first-step-cond}, if up to and including $\langle \tilde \Sigma^q \rangle$ there is no differential ideal, then there is no non-degenerate multiplier. Hence, for each $q$, the subcases to be considered are that a differential ideal is generated at step $1$, step $2$,..., up to step $q$. Subsidiary to this is the consideration of integrable directions within non-integrable eigen co-distributions.
\item[C.] $\bold{\Phi}$ is diagonalisable with repeated eigenvalues. Further subdivision according to integrability will be  similar to case B above.
\item[D.] $\bold{\Phi}$ is not diagonalisable. Further subdivision depends on the integrability of normal forms of $\bold \Phi$.
\end{itemize}

We invite the reader who has persevered thus far to compare this scheme to the geometric translation of Douglas's scheme for $n=2$ to be found in \cite{CSMBP94}. If that scheme was translated into EDS terms the differential ideal conditions would be followed by integrability conditions on the eigen co-distributions. In the light of theorem \ref{DI-first-step-cond} and our examples we maintain that the integrability of the eigen co-distributions must be considered first.

\section*{Acknowledgements}
Thoan Do gratefully acknowledges receipt of a Vietnamese government MOET-VIED scholarship and scholarship support from La Trobe University, and the hospitality of the Australian Mathematical Sciences Institute. Both authors thank Willy Sarlet for useful discussions and his continuing interest. We thanks two anonymous referees for their helpful suggestions.

\end{document}